\newtheorem{Corollary}{Corollary}
\newtheorem{Theorem}{Theorem}
\newtheorem{Lemma}{Lemma}
\newtheorem{Claim}{Claim}
\newtheorem{Proposition}{Proposition}
\newtheorem{case}{Case}
\newenvironment{proofof}[1]{\begin{proof}[Proof of #1]}{\end{proof}}
\title{Packing of mixed hyperarborescences with flexible roots\\
 via matroid intersection}
\author{Florian H\"{o}rsch and Zolt\'an Szigeti,\\ GSCOP, Grenoble}
\begin{document}

\maketitle

\begin{abstract}
Given a mixed hypergraph $\mathcal{F}=(V,\mathcal{A}\cup \mathcal{E})$, functions $f,g:V\rightarrow \mathbb{Z}_+$ and an integer $k$, a packing of $k$ spanning mixed hyperarborescences is called $(k,f,g)$-flexible if every $v \in V$ is the root of at least $f(v)$ and at most $g(v)$ of the mixed hyperarborescences. We give a characterization of the mixed hypergraphs admitting such packings. This generalizes results of Frank and, more recently, Gao and Yang. Our approach is based on matroid intersection, generalizing a construction of Edmonds. We also obtain an algorithm for finding a minimum weight solution to the above mentioned problem.
\end{abstract}

\section{Introduction}

The purpose of this article is to generalize a recent result of Gao and Yang on packing mixed arborescences with flexible roots from mixed graphs to mixed hypergraphs using matroid intersection. This also yields a weighted algorithm for the corresponding problem.
\medskip

In order to understand the introduction, the reader may find all the necessary definitions in Section \ref{deflab}.
\medskip

The most basic setting when dealing with arborescence packings is the following one: Given a directed graph $D=(V,A)$ and a multiset $R$ of vertices in $V$, we want to find a packing $\{B_r:r \in R\}$ of spanning $r$-arborescences. The following result was proven by Edmonds in 1973 in \cite{e} and is fundemental to the theory of arborescence packings. It gives a complete characterization for the existence of packings of spanning arborescences with fixed roots in the basic setting.

\begin{Theorem}\label{basic1}
Let $D=(V,A)$ be a digraph and $R$ a multiset of $V.$ There exists a packing $\{B_r:r \in R\}$ of spanning $r$-arborescences  in $D$ if and only if 
$d_A^{-}(X)\geq |R-X| \text{ for all } \emptyset \neq X\subseteq V.$
\end{Theorem}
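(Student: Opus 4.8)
The plan is to treat the two directions separately and to prove sufficiency by induction on $|R|$. Necessity is immediate: fix $\emptyset \neq X \subseteq V$ and take any $r \in R$ with $r \notin X$. Since $B_r$ is a spanning $r$-arborescence, it contains a directed path from $r$ to each vertex of $X$, and such a path must use an arc entering $X$. As the members of the packing are pairwise arc-disjoint, these arcs are distinct across the different roots lying outside $X$, so $d_A^-(X) \geq |R - X|$.

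For sufficiency, the base case $R = \emptyset$ is vacuous, and the core of the inductive step is an extraction lemma: if $d_A^-(X) \geq |R - X|$ holds for all nonempty $X$ and $R \neq \emptyset$, then there exist a root $s \in R$ and a spanning $s$-arborescence $B$ such that $D' := (V, A \setminus A(B))$ satisfies $d_{A \setminus A(B)}^-(X) \geq |(R - s) - X|$ for every nonempty $X$, where $R - s$ denotes $R$ with one copy of $s$ removed. Granting this, I delete $A(B)$, apply the induction hypothesis to $(D', R - s)$, and adjoin $B$ to the resulting packing.

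To prove the extraction lemma I would grow $B$ greedily. Call a nonempty proper $X$ \emph{tight} if $d_{A'}^-(X) = |(R-s) - X|$ for the current remaining arc set $A'$. Keep a partial $s$-arborescence spanning $U \ni s$, together with the invariant that deleting the chosen arcs preserves $d_{A'}^-(X) \geq |(R-s)-X|$ for all nonempty $X$; the start $U = \{s\}$ works because $|(R-s)-X| \leq |R-X| \leq d_A^-(X)$. While $U \neq V$, I seek an arc $(u,w)$ with $u \in U$ and $w \notin U$ that enters no currently tight set, so that removing it keeps the invariant while enlarging $U$. The main obstacle is exactly to guarantee that such an arc exists before $U = V$. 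This is where submodularity is essential: $d_{A'}^-$ is submodular and $X \mapsto |(R-s)-X|$ is modular, so the tight sets are closed under intersection and union; a careful uncrossing argument (together with the choice of $s$) shows that if every arc leaving $U$ were blocked by a tight set, then the deficiency inequality would fail on a suitable union of tight sets, a contradiction.

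Because the present paper works through matroid intersection, I would also set up the matroidal formulation that mirrors Edmonds' construction: encode a packing as a common independent set of size $|R|\,(|V|-1)$ of two matroids on the ground set $A \times R$, namely a partition matroid enforcing the in-degree and root conditions that turn each class into a branching, and a second matroid enforcing that each class is a forest and that the packing is arc-disjoint. The statement then follows from the matroid intersection min-max theorem, and the real work migrates to the rank function: one must verify $r_1(E') + r_2((A\times R) \setminus E') \geq |R|\,(|V|-1)$ for all $E' \subseteq A \times R$, and uncrossing reduces this once more to the hypothesis $d_A^-(X) \geq |R - X|$. The delicate modelling point here is to encode arc-disjointness so that the second structure remains a genuine matroid.
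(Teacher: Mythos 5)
The paper offers no proof of this statement: it is Edmonds' 1973 theorem, quoted from \cite{e} and used as a black box (indeed the matroidal reformulation, Theorem \ref{basic2}, is itself derived \emph{from} it). So your argument can only be judged on its own terms. The necessity direction is complete and correct, and the sufficiency strategy --- induction on $|R|$ via an extraction lemma, proved by growing a partial $s$-arborescence while avoiding tight sets --- is exactly Lov\'asz's classical proof, so the plan is the right one.

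The gap is that the one step you defer (``a careful uncrossing argument shows\dots'') is the entire content of the theorem, and the set-up you commit to does not support it. First, the invariant $d_{A'}^-(X)\ge |(R-s)-X|$ is weaker than what Lov\'asz's argument maintains; the standard proof treats the growing set $U$ as the root set of the first branching and keeps $d_{A'}^-(X)\ge |(R-s)-X|+1$ for every nonempty $X$ disjoint from $U$. That extra unit is what makes the counting step work: for a suitable extremal tight set $X$ with $X\cap U\neq\emptyset\neq X-U$, comparing $d_{A'}^-(X-U)\ge p(X-U)\ge p(X)+1=d_{A'}^-(X)+1$ forces an arc of $A'$ from $X\cap U$ to $X-U$, which one then shows (by uncrossing against the extremal choice) enters no dangerous tight set. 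Under your weaker invariant, $p(X-U)-p(X)=|(R-s)\cap X\cap U|$ can be $0$, no arc is guaranteed, and the greedy growth can stall; the extraction lemma is still \emph{true}, but your proposed proof of it does not close. Second, the contradiction is not that ``the deficiency inequality fails on a suitable union of tight sets'': unions of intersecting tight sets are again tight, so they satisfy the inequality with equality rather than violate it. Third, no special ``choice of $s$'' is needed once the invariant is strengthened; any $s\in R$ works. Your alternative matroidal route has a separate, concrete problem that you yourself flag: on the ground set $A\times R$, the three requirements (each class a forest, per-class in-degree/root conditions, at most one copy of each arc) do not split into two matroids in any evident way. Edmonds' actual model works on the ground set $A$, using the $k$-sum of the graphic matroid and a partition matroid --- but translating a common independent set back into a packing is precisely Theorem \ref{basic2}, which this paper obtains \emph{from} Theorem \ref{basic1}, so within the logic of this paper that route would be circular.
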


Another celebrated achievement of Edmonds is a connection between the theory of spanning arborescence packings in digraphs and matroid theory. For that, he considers two matroids on the arc set  $A$ of $D$. The first matroid ${\sf M}_1$ is the $k$-sum of the forest matroid of the underlying graph of $D$ where $k=|R|$. In other words, an arc set is independent in ${\sf M}_1$ if the corresponding edge set can be partitioned into $k$ forests. The second matroid ${\sf M}_2$ is the direct sum of the uniform matroids of rank $k-|R\cap v|$ on the set $\delta_A^-(v)$ of arcs entering $v$ for all $v \in V$. We refer to these matroids as the {\it $k$-forest matroid} and the {\it $(k,R)$-partition matroid} of $D$, respectively. Edmonds proved in \cite{e4} that the arc sets of a packing of spanning arborescences with respect to the root set $R$ are exactly the common independent sets of ${\sf M}_1$ and ${\sf M}_2$. The following observation which can simply be obtained from Theorem \ref{basic1} is crucial for this modeling. It can be found as Theorem 13.3.20 in \cite{book}. It can also be found as Lemma 5.4.6 in \cite{notes}, where a direct proof is provided.
\begin{Theorem}\label{basic2}
Let $D=(V,A)$ be a digraph and $R$ a multiset of $V$ of size $k.$ 
 Some $A'\subseteq A$ is the arc set of a packing $\{B_r:r \in R\}$ of spanning $r$-arborescences in $D$ if and only if the underlying edge set of $A'$ is the edge set of a packing of $k$ spanning trees and $d_{A'}^-(v)=k-|R \cap v|$ for all $v \in V$. 
\end{Theorem}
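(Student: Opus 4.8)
The plan is to prove both directions, relying on Theorem \ref{basic1} for the harder (sufficiency) direction and on a degree-counting argument to ensure that the packing it produces actually exhausts $A'$.

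For necessity, suppose $A'$ is the arc set of a packing $\{B_r:r\in R\}$ of spanning $r$-arborescences. In each spanning $r$-arborescence $B_r$, every vertex other than $r$ has in-degree exactly $1$ and $r$ has in-degree $0$. Summing over the $k$ arborescences, a vertex $v$ receives one entering arc from each $B_r$ with $r\neq v$, so $d_{A'}^-(v)=k-|R\cap v|$. Moreover the underlying edge set of each $B_r$ is a spanning tree, so the underlying edge set of $A'$ is a packing of $k$ spanning trees.

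For sufficiency, assume the two conditions hold. The idea is to apply Theorem \ref{basic1} to the digraph $D'=(V,A')$ with root multiset $R$, and then argue that the resulting packing uses every arc of $A'$. To invoke Theorem \ref{basic1} I would verify $d_{A'}^-(X)\geq |R-X|$ for every $\emptyset\neq X\subseteq V$. Using the identity $\sum_{v\in X} d_{A'}^-(v)=d_{A'}^-(X)+i_{A'}(X)$, where $i_{A'}(X)$ denotes the number of arcs with both ends in $X$, together with $d_{A'}^-(v)=k-|R\cap v|$, the required inequality reduces to $i_{A'}(X)\leq k(|X|-1)$. This last bound follows from the hypothesis that the underlying edges form $k$ spanning trees: the edges of each spanning tree lying inside $X$ form a forest on the vertex set $X$ and hence number at most $|X|-1$, so their total over the $k$ trees is at most $k(|X|-1)$.

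Finally I would close the gap between ``a packing exists in $D'$'' and ``$A'$ is exactly the arc set of a packing''. A packing of $k$ spanning arborescences is arc-disjoint and uses exactly $k(|V|-1)$ arcs, while summing $d_{A'}^-(v)=k-|R\cap v|$ over all $v$ gives $|A'|=k|V|-|R|=k(|V|-1)$. Since the arc set of the packing is contained in $A'$ and has the same cardinality, it equals $A'$. The main obstacle I anticipate is precisely this counting step: Theorem \ref{basic1} only guarantees a packing somewhere inside $D'$, and it is the degree condition together with the spanning-tree hypothesis that force this packing to saturate $A'$ rather than leave some arcs unused.
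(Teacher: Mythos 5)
Your proof is correct and complete: both directions check out, and the key counting step ($|A'|=\sum_{v}(k-|R\cap v|)=k(|V|-1)$ forcing the packing produced by Theorem \ref{basic1} in $(V,A')$ to exhaust $A'$) is exactly the point that makes the deduction work. The paper does not spell out a proof but states that the result "can simply be obtained from Theorem \ref{basic1}" (citing Frank's book for details), and your argument is precisely that derivation, so it matches the intended approach.
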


A first way of generalizing the results obtained in the basic setting is the consideration of directed hypergraphs instead of digraphs. The following generalization of Theorem \ref{basic1} was proved in a stronger form by Frank, Kir\'aly and Kir\'aly in \cite{fkk}.
\begin{Theorem}\label{hyperedmonds1}
Let $\mathcal{D}=(V,\mathcal{A})$ be a dypergraph and $R$ a multiset in $V.$ There exists a packing $\{\mathcal{B}_r:r \in R\}$ of spanning $r$-hyperarborescences in $\mathcal{D}$   if and only if $d_{\mathcal{A}}^-(X)\geq |R-X|$ for all $\emptyset\neq X\subseteq V.$
\end{Theorem}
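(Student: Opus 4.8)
The plan is to prove necessity by a direct cut argument and sufficiency by reducing to the digraph case (Theorem \ref{basic1}) through a \emph{trimming} argument. For necessity, suppose such a packing $\{\mathcal{B}_r : r\in R\}$ exists and fix $\emptyset\neq X\subseteq V$. For every $r\in R$ with $r\notin X$ the spanning $r$-hyperarborescence $\mathcal{B}_r$ must contain a hyperarc entering $X$, because $r$ reaches every vertex of $X$; since the $\mathcal{B}_r$ are hyperarc-disjoint, these hyperarcs are distinct, giving $d^-_{\mathcal{A}}(X)\geq |R-X|$.

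For sufficiency, I would use that a spanning $r$-hyperarborescence is exactly a set of hyperarcs that can be \emph{trimmed} (selecting one tail $z$ of each hyperarc $(Z,v)$ and replacing it by the arc $(z,v)$) to a spanning $r$-arborescence. Hence it suffices to trim the whole dypergraph $\mathcal{D}$ to a digraph $D$ so that $d^-_D(X)\geq |R-X|$ for all $\emptyset\neq X\subseteq V$: Theorem \ref{basic1} then yields a packing of arborescences in $D$, and un-trimming each used arc back to its hyperarc produces the desired packing in $\mathcal{D}$ (trimming sends distinct hyperarcs to distinct arcs, so disjointness is preserved). I would establish the existence of such a trimming by induction on $\sum_{(Z,v)\in\mathcal{A}}(|Z|-1)$, trimming a single hyperarc $a=(Z,v)$ with $|Z|\geq 2$ at each step and verifying that the cut condition survives.

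The heart of the matter, and the main obstacle, is to show that some tail $z\in Z$ is \emph{safe}, meaning that replacing $a$ by $(z,v)$ keeps $d^-(X)\geq |R-X|$ for every $\emptyset\neq X\subseteq V$. Call $X$ \emph{tight} if $d^-_{\mathcal{A}}(X)=|R-X|$; only tight sets can be endangered, and trimming to $z$ harms a tight set $X$ precisely when $v\in X$, $z\in X$ and $Z\not\subseteq X$. The two facts I would exploit are that $d^-_{\mathcal{A}}$ is submodular (the entry indicator of each hyperarc is submodular) while $X\mapsto|R-X|$ is modular, so that tight sets sharing the common element $v$ are closed under union and intersection by the usual uncrossing. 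It then remains to prove that $Z\not\subseteq N$, where $N$ is the union of all tight sets $X$ with $v\in X$ and $Z\not\subseteq X$; any $z\in Z\setminus N$ is safe, since every tight set containing $v$ and $z$ lies in $N$.

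To prove $Z\not\subseteq N$ I would argue by contradiction. Assuming $Z\subseteq N$, choose a tight set $A$ with $v\in A$ and $Z\not\subseteq A$ maximizing $|Z\cap A|$, pick $z_b\in Z\setminus A$, and take a tight witness $B$ with $v,z_b\in B$ and $Z\not\subseteq B$; maximality of $|Z\cap A|$ forces $Z\subseteq A\cup B$ and also produces an element of $Z$ in $A\setminus B$ (otherwise $A\cup B$ or $B$ would capture more of $Z$ than $A$). Then $a$ enters $A$, $B$ and $A\cap B$ but not $A\cup B$, so $a$ contributes $2$ to $d^-_{\mathcal{A}}(A)+d^-_{\mathcal{A}}(B)$ and only $1$ to $d^-_{\mathcal{A}}(A\cup B)+d^-_{\mathcal{A}}(A\cap B)$; as every other hyperarc contributes at least as much to the former sum as to the latter by submodularity, the inequality $d^-_{\mathcal{A}}(A)+d^-_{\mathcal{A}}(B)>d^-_{\mathcal{A}}(A\cup B)+d^-_{\mathcal{A}}(A\cap B)$ is strict. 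This contradicts the equality $d^-_{\mathcal{A}}(A)+d^-_{\mathcal{A}}(B)=d^-_{\mathcal{A}}(A\cup B)+d^-_{\mathcal{A}}(A\cap B)$ forced by tightness together with the cut condition at $A\cup B$ and $A\cap B$. Hence a safe tail exists, the induction goes through, and the trimming is obtained.
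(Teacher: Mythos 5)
The paper does not actually prove Theorem~\ref{hyperedmonds1}; it only cites Frank, Kir\'aly and Kir\'aly \cite{fkk}, where a stronger statement is established. Your argument is a correct, self-contained derivation and follows the classical route: necessity by counting entering hyperarcs for roots outside $X$, and sufficiency by trimming $\mathcal{D}$ down to a digraph that still satisfies the cut condition and then invoking Theorem~\ref{basic1}. The key safe-tail lemma is sound as you set it up: the in-degree of a set drops (by exactly one) only when $v\in X$, $z\in X$ and $Z\not\subseteq X$, so only tight sets matter; $X\mapsto|R-X|$ is modular and each hyperarc's entering indicator is submodular, so two tight sets containing $v$ uncross; and if $Z$ were covered by the union $N$ of dangerous tight sets, your extremal choice of $A$ (maximizing $|Z\cap A|$) together with a witness $B\ni z_b$ forces $Z\subseteq A\cup B$ (else $A\cup B$, which is tight by the equality chain, would beat $A$), whence $a$ contributes $2$ to $d^-_{\mathcal{A}}(A)+d^-_{\mathcal{A}}(B)$ but only $1$ to $d^-_{\mathcal{A}}(A\cup B)+d^-_{\mathcal{A}}(A\cap B)$, contradicting the equality forced by tightness and the cut condition (note $v\in A\cap B$ guarantees $A\cap B\neq\emptyset$). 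The aside about an element of $Z$ in $A\setminus B$ is true but not needed. The only things worth writing out explicitly in a final version are the degenerate case where no dangerous set exists (then any $z\in Z$ is safe) and the observation that un-trimming preserves both disjointness and the spanning $r$-hyperarborescence property because distinct hyperarcs trim to distinct arcs.
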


We can conclude the following statement from Theorem \ref{hyperedmonds1} using the same technique that was used in \cite{book} to conclude Theorem \ref{basic2} from Theorem \ref{basic1}. We therefore omit the proof.

\begin{Theorem}\label{hyperedmonds2}
Let $\mathcal{D}=(V,\mathcal{A})$ be a dypergraph and $R$ a multiset in $V$ of size $k.$ Some $\mathcal{A}'\subseteq \mathcal{A}$ is the dyperedge set of a packing $\{\mathcal{B}_r:r \in R\}$ of spanning $r$-hyperarborescences in $\mathcal{D}$ if and only if the underlying hyperedge set of $\mathcal{A}'$ is the hyperedge set of a packing of $k$ spanning hypertrees and $d_{\mathcal{A}'}^-(v)=k-|R \cap v|$ for all $v \in V.$ 
\end{Theorem}


The above idea of approaching the problem of packing spanning arborescences via matroid intersection is useful for two reasons. On the one hand, one can apply Edmonds' matroid intersection theorem \cite{e3} to get the characterization for the existence of the packing. On the other hand, one can apply Edmonds' weighted matroid intersection algorithm  \cite{e2} to find a packing of minimum total weight. These results of Edmonds are presented in the following theorem. 

\begin{Theorem}\label{matroidintersection}
Let ${\sf M}_1=(S,r_1)$ and ${\sf M}_2=(S,r_2)$ be two matroids on a common ground set $S$ with polynomial independence oracles for ${\sf M}_1$ and ${\sf M}_2$ being available, $\mu$ a positive integer and let $w:S\rightarrow \mathbb{R}$ be a weight function.
\begin{itemize}
	\item [(a)] \cite{e3} A common independent set of size $\mu$  of ${\sf M}_1$ and ${\sf M}_2$ exists  if and only if $r_1(Z)+r_2(S-Z)\geq \mu$ for all $Z \subseteq S$.
	\item [(b)] \cite{e2} We can decide if a common independent set of size $\mu$ of ${\sf M}_1$ and ${\sf M}_2$ exists in polynomial time. Further, if this is the case, then one of minimum weight  can be computed in polynomial time.
\end{itemize}
\end{Theorem}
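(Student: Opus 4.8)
The plan is to establish the min-max equality
$\max\{|I| : I \text{ common independent in } {\sf M}_1,{\sf M}_2\} = \min_{Z \subseteq S}\bigl(r_1(Z)+r_2(S-Z)\bigr)$,
from which part (a) follows at once (a common independent set of size $\mu$ exists iff the maximum is at least $\mu$ iff the minimum is at least $\mu$ iff every $Z$ satisfies the stated inequality), and to read the algorithms of part (b) off the constructive core of this equality. Writing $\mathcal{I}_i$ for the family of independent sets of ${\sf M}_i$, I would first dispatch the easy inequality $\le$: for any common independent $I$ and any $Z$, the set $I \cap Z$ is independent in ${\sf M}_1$ and $I \setminus Z$ is independent in ${\sf M}_2$, so $|I| = |I \cap Z| + |I \setminus Z| \le r_1(Z) + r_2(S-Z)$. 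This already yields the necessity direction of (a).

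For the reverse inequality I would use the matroid intersection augmenting-path framework. Given a common independent set $I$ with $X_1 \cap X_2 = \emptyset$ (otherwise augmentation is immediate), where $X_1 = \{y \notin I : I+y \in \mathcal{I}_1\}$ and $X_2 = \{y \notin I : I+y \in \mathcal{I}_2\}$, I would build the exchange digraph $D_I$ on vertex set $S$ with an arc $(x,y)$ whenever $x \in I$, $y \notin I$ and $I-x+y \in \mathcal{I}_1$, and an arc $(y,x)$ whenever $x \in I$, $y \notin I$ and $I-x+y \in \mathcal{I}_2$; for $y \notin X_i$ the relation $I-x+y \in \mathcal{I}_i$ holds precisely when $x$ lies on the fundamental circuit of $y$ in ${\sf M}_i$. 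The heart of the argument is the \emph{augmentation lemma}: if $P$ is a shortest directed $X_1$--$X_2$ path in $D_I$ (hence one with no shortcut chords), then $I \triangle V(P)$ is again common independent and has size $|I|+1$. I would prove this via the standard tool that a bipartite exchange graph admitting a unique perfect matching certifies independence of the exchanged set, the no-shortcut property of a shortest path guaranteeing exactly such uniqueness simultaneously in ${\sf M}_1$ and ${\sf M}_2$. I expect this lemma to be the main obstacle, since the matching-uniqueness step is the one genuinely delicate point.

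It then remains to treat a maximum $I$, i.e.\ the case where no $X_1$--$X_2$ path exists. Here I would let $Z$ be the set of vertices from which $X_2$ is reachable in $D_I$; then $X_2 \subseteq Z$, the absence of a path forces $X_1 \cap Z = \emptyset$, and no arc of $D_I$ enters $Z$ from outside. Translating this arc condition back through fundamental circuits, I would show that $I \cap Z$ spans $Z$ in ${\sf M}_1$ and that $I \setminus Z$ spans $S-Z$ in ${\sf M}_2$, whence $r_1(Z)+r_2(S-Z) = |I \cap Z| + |I \setminus Z| = |I|$. Thus the minimum is at most $|I|$, matching the maximum; this proves the min-max equality and completes part (a).

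Finally, for part (b) I would extract the algorithms from this framework. The digraph $D_I$ is built in polynomial time using the independence oracles (testing $I-x+y \in \mathcal{I}_i$ for each relevant pair), and a shortest $X_1$--$X_2$ path is found by breadth-first search; iterating the augmentation at most $|S|$ times decides existence of a common independent set of size $\mu$ and produces one, giving the unweighted assertion. For the weighted assertion I would maintain the stronger invariant that after $j$ augmentations one holds a minimum-weight common independent set of size $j$, augmenting along a path that is shortest in the number of arcs among those of minimum total $w$-length in $D_I$ (with $w$ assigned as vertex lengths, negated on $I$). The extra work is to verify, through a potential/exchange argument excluding cheaper sets via zero- or negative-length directed cycles in the weighted exchange graph, that this choice preserves the invariant; the resulting set of size $\mu$ is then of minimum weight, and the no-path certificate settles non-existence.
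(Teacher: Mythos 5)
The paper offers no proof of this theorem: both parts are quoted from Edmonds (\cite{e3} for the min--max characterization, \cite{e2} for the algorithms) and used as black boxes, so your proposal is not competing with an argument in the text and must be judged on its own. It is a correct outline of the classical augmenting-path proof. The easy inequality $|I|=|I\cap Z|+|I\setminus Z|\le r_1(Z)+r_2(S-Z)$ gives necessity; the exchange digraph $D_I$, augmentation along a chordless shortest $X_1$--$X_2$ path justified by the unique-perfect-matching criterion, and, in the terminal case, the set $Z$ of vertices from which $X_2$ is reachable (no arc enters $Z$, so $I\cap Z$ spans $Z$ in ${\sf M}_1$ and $I\setminus Z$ spans $S-Z$ in ${\sf M}_2$ by the fundamental-circuit argument) together yield the min--max equality and hence (a); one small point to spell out in the augmentation step is that the matching lemma handles the equal-size exchange $\{x_1,\dots,x_m\}\leftrightarrow\{y_1,\dots,y_m\}$ and the extra element $y_0\in X_1$ (resp.\ $y_m\in X_2$) is then absorbed by a span-preservation remark, since the two sets being compared differ in size by one. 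For (b), the unweighted algorithm is immediate from iterating the augmentation with oracle-built exchange graphs, and downward closure lets one truncate to size $\mu$; the weighted algorithm rests on the extreme-set invariant you describe, whose justification (absence of negative directed cycles in the weighted exchange graph characterizes minimum-weight common independent sets of each cardinality, and augmenting along a minimum-length, fewest-arcs path preserves this) is the one substantial piece you have named rather than proved. You have correctly located the two genuinely delicate points --- matching uniqueness and the weighted invariant --- so the route is sound and complete in outline, but those two lemmas would need to be written out in full for a self-contained proof.
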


Giving a similar characterization in terms of matroid intersection for arborescence packings in a more general setting is the main purpose of this article.
\medskip

Firstly, the generalization concerns mixed hypergraphs instead of digraphs. Secondly, the generalization relaxes the condition of the roots of the arborescences being fixed. We are given a mixed hypergraph {\boldmath$\mathcal{H}=(V,\mathcal{A}\cup \mathcal{E})$, a non-negative integer $k$ and functions $f,g$} $:V\rightarrow \mathbb{Z}_{\geq 0},$ 
and we  want to find a packing of $k$ spanning mixed  $r_i$-hyperarborescences $\{\mathcal{B}_i:i\in \{1,\ldots,k\}\}$ such that every vertex $v \in V$ is the root of at least $f(v)$ and at most $g(v)$ of the $k$ hyperarborescences. We call such a packing {\it $(k,f,g)$-flexible}.
The digraphic case of this problem has been successfully treated by Frank in \cite{f}. He gave both a theorem characterizing the digraphs admitting a $(k,f,g)$-flexible packing for given $k,f,g$ and an algorithm to find such a packing if it exists. Recently, this has been generalized to the case of mixed graphs by Gao and Yang in \cite{gy}. 
\medskip

The basic insight of our approach is that, {\sl given a packing of $k$ spanning arborescences, for every vertex $v\in V$, the number of arborescences in the packing whose root is $v$ plus the in-degree of $v$ in the packing is equal to $k.$} We use this fact to show that $(k,f,g)$-flexible packings in mixed hypergraphs can be modeled as the intersection of two matroids. While a hypergraphic analogue of the $k$-forest matroid is maintained as one of the two matroids, the $(k,R)$-partition matroid is replaced by a more general object, a so-called generalized partition matroid. Using Theorem \ref{matroidintersection}(a), this allows to obtain the following characterization for $(k,f,g)$-flexible packings in mixed hypergraphs which is the main contribution of this article. It generalizes the theorem of Gao and Yang. Our proof is completely different from the one in \cite{gy} and works for mixed hypergraphs.

\begin{Theorem}\label{new}
Let $\mathcal{F}=(V,\mathcal{A}\cup \mathcal{E})$ be a mixed hypergraph, $k\in \mathbb{Z}_+$ and $f, g:V\rightarrow \mathbb{Z}_+$ functions.  There exists a $(k,f,g)$-flexible packing of mixed hyperarborescences in $\mathcal{F}$ if and only if   we have
\begin{eqnarray}
	f(v)	& 	\leq 	&	g(v) \ \ \ \ \ \ \ \ \ \ \ \ \ \ \ \ \ \ \ \ \ \ \ \ \text{ for every } v \in V,\label{fg} \\ 
	e_{\mathcal{E}\cup\mathcal{A}}(\mathcal{P})	&	\ge 	&	k(|\mathcal{P}|-1)+f(V-\cup\mathcal{P}) \text{ for every subpartition } \mathcal{P} \text{ of } V,\label{dktgy}\\ 
	e_{\mathcal{E}\cup\mathcal{A}}(\mathcal{P})	&	\ge 	&	k|\mathcal{P}|-g(\cup\mathcal{P}) \ \ \ \ \ \ \ \ \ \ \ \ \text{  for every subpartition } \mathcal{P} \text{ of } V. \label{dktgy2}
\end{eqnarray}
\end{Theorem}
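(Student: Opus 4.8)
The plan is to reduce the existence of a $(k,f,g)$-flexible packing to a matroid intersection problem and then read off the three conditions from Theorem \ref{matroidintersection}(a). The starting point is the identity stressed in the introduction: in any packing of $k$ spanning mixed hyperarborescences, every vertex $v$ satisfies $\rho(v)+d^-(v)=k$, where $\rho(v)$ is the number of hyperarborescences rooted at $v$ and $d^-(v)$ is the in-degree of $v$ in the oriented packing. Hence the root bounds $f(v)\le\rho(v)\le g(v)$ are equivalent to the in-degree window $k-g(v)\le d^-(v)\le k-f(v)$, and a packing exists precisely when the underlying hyperedges can be chosen to form $k$ spanning hypertrees and then oriented so as to realise an in-degree function lying in this window. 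This is the flexible, hypergraphic analogue of Theorem \ref{basic2} and Theorem \ref{hyperedmonds2}.

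Following Edmonds' idea, I would model this as the intersection of two matroids on a common ground set $S$ built from the hyperedges of $\mathcal F$, with target size $\mu=k(|V|-1)$. The first matroid ${\sf M}_1$ is the hypergraphic analogue of the $k$-forest matroid: a set is independent iff its underlying hyperedges can be partitioned into $k$ hyperforests, so that a common independent set of size $\mu$ is automatically a packing of $k$ spanning hypertrees. The second matroid ${\sf M}_2$ is the generalized partition matroid that must encode the two-sided in-degree window, and building it is the delicate point. I would let $S$ record, for each hyperedge, the choice of its head, and obtain ${\sf M}_2$ by matroid induction through the incidence bipartite graph between hyperedges and vertices from a nested (laminar) bound structure on $V$; induction guarantees that ${\sf M}_2$ is a genuine matroid and that each hyperedge receives at most one head, which is exactly the requirement that a hyperedge is used at most once in the packing. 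To capture the lower in-degree bound $d^-(v)\ge k-g(v)$ (equivalently $\rho(v)\le g(v)$), which is not by itself a matroidal constraint, I would introduce at each vertex $v$ auxiliary root-indicator elements of multiplicity $g(v)$ (morally, arcs from a new super-root), so that the upper bound $d^-(v)\le k-f(v)$ on genuine in-edges, together with the saturation forced by the size $\mu$, turns the lower bound into the statement that between $f(v)$ and $g(v)$ of these indicators are used. The laminar cap then realises both sides of the window inside a single matroid.

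With this setup I would prove the equivalence between a $(k,f,g)$-flexible packing and a common independent set of size $\mu$ of ${\sf M}_1$ and ${\sf M}_2$ in both directions: a packing yields such a common independent set directly, and conversely a common independent set of size $\mu$ is a basis of ${\sf M}_1$, hence a packing of $k$ spanning hypertrees, whose ${\sf M}_2$-compatible orientation can be converted into the desired hyperarborescence packing by invoking Theorem \ref{hyperedmonds2}. Condition \eqref{fg} is then immediate, since the window $[k-g(v),k-f(v)]$ is nonempty only when $f(v)\le g(v)$. Finally I would apply Theorem \ref{matroidintersection}(a): the common independent set of size $\mu$ exists iff $r_1(Z)+r_2(S\setminus Z)\ge\mu$ for every $Z\subseteq S$.

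The main obstacle is the translation of this single, universally quantified rank inequality into the two clean subpartition conditions \eqref{dktgy} and \eqref{dktgy2}. Here I would compute both rank functions explicitly (the rank of ${\sf M}_1$ via the matroid-union and hypergraphic rank formulas, the rank of ${\sf M}_2$ via the K\"onig-type defect formula for induced matroids) and argue that the minimum of $r_1(Z)+r_2(S\setminus Z)$ is always attained at a set $Z$ consisting of all hyperedges spanned by the parts of some subpartition $\mathcal P$ of $V$; the quantity $e_{\mathcal E\cup\mathcal A}(\mathcal P)$ then appears as the complementary contribution of ${\sf M}_1$. The two layers of the laminar bound in ${\sf M}_2$ produce the two families of inequalities: the layer carrying $f$ yields the term $f(V-\cup\mathcal P)$ in \eqref{dktgy}, while the layer carrying the root-indicator multiplicities $g$ yields the term $g(\cup\mathcal P)$ in \eqref{dktgy2}. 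Pinning down the extremal $Z$ and matching the resulting expressions against $e_{\mathcal E\cup\mathcal A}(\mathcal P)$, $k(|\mathcal P|-1)$ and $k|\mathcal P|$ is where the real work lies; everything else is bookkeeping.
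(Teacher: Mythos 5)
Your overall strategy --- intersecting a $k$-hyperforest matroid with a partition-type matroid encoding the in-degree window $k-g(v)\le d^-(v)\le k-f(v)$ on the directed extension, and then translating the rank condition of Theorem \ref{matroidintersection}(a) into the subpartition inequalities --- is exactly the strategy of the paper. However, the two places where you yourself locate ``the delicate point'' and ``the real work'' are precisely the places where the paper has to do substantial work, and your proposal does not do it, so as it stands there are genuine gaps rather than just bookkeeping left over.

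First, the construction of ${\sf M}_2$. The paper does not use matroid induction through a bipartite graph or auxiliary root-indicator elements; it defines a \emph{generalized partition matroid} directly on $\delta^-_{\mathcal{A}'}(v)$, $v\in V$, with lower bounds $\alpha_v=k-g(v)$, upper bounds $\beta_v=k-f(v)$ and global size $\mu=k(|V|-1)$, where independence is $z_i\le\beta_i$ together with $\sum_i\max\{\alpha_i,z_i\}\le\mu$; the lower bounds are only enforced at bases. Proving that this is a matroid is Theorem \ref{genpart}, and --- crucially --- it is a matroid \emph{only under} the feasibility conditions \eqref{genpartcond1}--\eqref{genpartcond2}. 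This creates a subtlety your plan misses entirely: before you can invoke matroid intersection you must show that conditions \eqref{fg}--\eqref{dktgy2} imply that ${\sf M}_2$ exists at all (the paper's Lemma \ref{condgenpart}), and for the necessity direction you cannot use the matroid model, since the matroid need not exist; the paper therefore proves necessity by a separate direct counting argument (Lemma \ref{necessity}). Your alternative construction via a laminar cap and $g(v)$ root indicators per vertex might avoid the existence issue, but you would then have to extend ${\sf M}_1$ over the enlarged ground set and redo the rank computation; none of this is specified, so one cannot check that it works.

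Second, the derivation of \eqref{dktgy} and \eqref{dktgy2} from $r_1(Z)+r_2(S-Z)<\mu$. You assert that the minimum is attained at a $Z$ consisting of all hyperedges spanned by the parts of a subpartition; the paper does not prove such an extremality statement. Instead it takes an arbitrary violating $\mathcal{Z}'$, uses the explicit rank formula for ${\sf M}_1$ (Proposition \ref{r1}) to extract a partition $\mathcal{P}$ and a hyperedge set $\mathcal{K}$, and uses monotonicity and subcardinality of $r_{{\sf M}_2}$ to pass to the canonical set $\mathcal{A}(\mathcal{P})\cup\mathcal{A}_{\mathcal{E}(\mathcal{P})-\mathcal{K}}$ (Claim \ref{part1}). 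The two inequalities then come not from ``two layers of a laminar bound'' but from a case distinction on which of the two terms in the min defining $r_{{\sf M}_2}$ is attained: the term $\sum_v\min\{k-f(v),d^-_{\mathcal{Z}}(v)\}$ yields a contradiction with \eqref{dktgy} applied to the subpartition of parts where the in-degree term wins everywhere, and the term $\mu-\sum_v\max\{k-g(v)-d^-_{\mathcal{Z}}(v),0\}$ yields a contradiction with \eqref{dktgy2}. This case analysis, together with the degree-counting identities \eqref{ekd2} and \eqref{ekd}, is the core of the proof and is absent from your proposal. Until you (i) exhibit a concrete second matroid and prove it is one, (ii) resolve the existence/necessity circularity, and (iii) carry out the rank computation and case analysis, the argument is a plan rather than a proof.
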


While the technique of matroid intersection for arborescence packings is routinely used as a tool to obtain algorithms for the weighted cases, this is to our best knowledge the first time that a new structural result is obtained via matroid intersection. By Theorem \ref{matroidintersection}(b), the previous observation also yields an algorithm to compute a $(k,f,g)$-flexible packing  of minimum total weight in polynomial time. 
 
\begin{Theorem}\label{algoflex}
Let $\mathcal{F}=(V,\mathcal{A}\cup \mathcal{E})$ be a mixed hypergraph,  $k\in\mathbb{Z}_+, f,g:V\rightarrow \mathbb{Z}_+$  functions and  $w:\mathcal{A}\cup \mathcal{E}\rightarrow \mathbb{R}$  a weight function. Then a $(k,f,g)$-flexible packing of mixed hyperarborescences of minimum weight can be computed in polynomial time, if there exists one.
\end{Theorem}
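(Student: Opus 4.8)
The plan is to reuse the two-matroid encoding that proves Theorem~\ref{new} and to run the weighted matroid intersection algorithm of Theorem~\ref{matroidintersection}(b) on top of it. Recall that this encoding represents the candidate $(k,f,g)$-flexible packings as the common independent sets of a prescribed size $\mu$ of two matroids ${\sf M}_1$ and ${\sf M}_2$ on a common ground set $S$ derived from $\mathcal{A}\cup\mathcal{E}$: here ${\sf M}_1$ is the hypergraphic $k$-forest matroid, recording that the chosen hyperedges trim to an edge set partitioning into $k$ spanning trees, and ${\sf M}_2$ is the generalized partition matroid that, via the identity (root multiplicity at $v$) $+\,d^-(v)=k$, enforces the bounds $f(v)$ and $g(v)$ on the number of roots at each vertex $v$. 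The target size is $\mu=k(|V|-1)$, the number of arcs produced by trimming $k$ spanning hypertrees.

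My first step is to make this correspondence weight-aware. I would lift $w:\mathcal{A}\cup\mathcal{E}\rightarrow\mathbb{R}$ to a weight function $\bar w:S\rightarrow\mathbb{R}$ by giving every element of $S$ the $w$-weight of the hyperedge of $\mathcal{A}\cup\mathcal{E}$ it originates from. Because the bijection of Theorem~\ref{new} matches a common independent set $I$ of size $\mu$ with a packing whose hyperedge set is exactly the set of originating hyperedges of the elements of $I$, each counted once, one gets $\bar w(I)=w(\text{packing})$. Hence a minimum-$\bar w$ common independent set of size $\mu$ corresponds to a minimum-weight $(k,f,g)$-flexible packing, and vice versa.

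It remains to certify the hypotheses of Theorem~\ref{matroidintersection}(b), namely polynomial independence oracles for both matroids. For ${\sf M}_2$ this is immediate, since testing independence only requires checking the per-vertex cardinality bounds. For ${\sf M}_1$ it suffices to decide whether the underlying hyperedges admit a trimming into an edge set that partitions into $k$ forests, which is a matroid-union computation over the graphic matroid and is therefore polynomial. With these oracles available, Theorem~\ref{matroidintersection}(b) decides in polynomial time whether a common independent set of size $\mu$ exists (this is exactly the solvability governed by conditions \eqref{fg}--\eqref{dktgy2}) and, when it does, returns one of minimum $\bar w$-weight; translating it back via the correspondence above yields the claimed minimum-weight packing.

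The step I expect to require the most care is the weight-preserving part of the correspondence rather than the algorithmic invocation, which is essentially a black box. The orientation of the undirected hyperedges in $\mathcal{E}$ and the trimming of all hyperedges must be encoded in $S$, presumably by representing a hyperedge through several elements, one per admissible head. I must then check that ${\sf M}_1$ and ${\sf M}_2$ jointly force at most one such element per hyperedge into any common independent set, so that $\bar w$ charges each used hyperedge exactly once and the identity $\bar w(I)=w(\text{packing})$ remains valid. Once this bookkeeping is confirmed to be consistent with the matroids built for Theorem~\ref{new}, the polynomial bound is inherited directly from Theorem~\ref{matroidintersection}(b).
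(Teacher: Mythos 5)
Your overall plan coincides with the paper's proof: lift $w$ to a weight function $w'$ on $\mathcal{A}\cup\mathcal{A}_{\mathcal{E}}$ by giving every dyperedge of a bundle $\mathcal{A}_e$ the weight $w(e)$, and run the weighted matroid intersection algorithm of Theorem~\ref{matroidintersection}(b) on ${\sf M}^k_{\mathcal{F}}$ and ${\sf M}_{\mathcal{D}_{\mathcal{F}}}^{(k,f,g)}$ with target size $k(|V|-1)$. The weight-preservation concern you flag is resolved exactly as you suspect: ${\sf M}^k_{\mathcal{F}}$ makes the elements of each bundle $\mathcal{A}_e$ parallel, so any common independent set uses at most one of them and $w'$ charges each hyperedge at most once. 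However, three points in your write-up are gaps or errors relative to what is actually needed.

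First, ${\sf M}_{\mathcal{D}_{\mathcal{F}}}^{(k,f,g)}$ is only a matroid when conditions \eqref{gpc1} and \eqref{gpc2} hold (Corollary~\ref{gpmd}); your algorithm feeds it to the matroid intersection routine unconditionally. The paper first tests \eqref{gpc1} and \eqref{gpc2} and, if they fail, concludes via Lemmas~\ref{condgenpart} and~\ref{necessity} that no packing exists. Second, your justification of the independence oracle for ${\sf M}_1$ is wrong for hypergraphs: deciding whether a set of hyperedges can be trimmed into $k$ forests is not ``a matroid-union computation over the graphic matroid,'' since there is no fixed underlying graph --- the trimming is part of what must be chosen. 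The correct route is matroid union over Lorea's hyperforest matroid ${\sf M}_{\mathcal{H}}$, which itself requires an independence oracle; the paper obtains one by reducing independence testing in ${\sf M}_{\mathcal{H}}$ to partition-connectivity testing (Lemma~\ref{qdfvq}, via Propositions~\ref{rangpartcon} and~\ref{yfgijl}) and only then applies Edmonds' matroid partition algorithm (Proposition~\ref{tfuyg}). Third, ``translating back via the correspondence'' is not algorithmic as stated: matroid intersection returns only the dyperedge set $\mathcal{A}^*$, and Theorem~\ref{hyperedmonds2}, which guarantees that $\mathcal{A}^*$ decomposes into $k$ spanning hyperarborescences with the prescribed root multiplicities, is an existence statement. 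To output the packing itself one needs a polynomial-time decomposition procedure; the paper invokes Theorem~9(a) of the Fortier--Kir\'aly--L\'eonard--Szigeti--Talon paper \cite{fklst} for this step. Each gap is repairable, but each requires an ingredient you did not supply.
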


\section{Definitions}\label{deflab}

For some directed graph (in short, {\it digraph}) $D=(V,A)$ and $r \in V$, an {\it $r$-arborescence} in $D$ is a subgraph of $D$ whose underlying graph is a tree and in which  all the vertices except $r$ have exactly $1$ arc entering. An $r$-arborescence of $D$ is called {\it spanning} if it contains all the vertices of $D.$ By a {\it packing} of arborescences we mean a set of arc-disjoint arborescences. 
A {\it multiset} $R$ is a collection of elements in which an element may appear several times. For some $x \in R$, we denote by {\boldmath$|R \cap x|$} the number of times $x$ is contained in $R$.

A {\it mixed hypergraph} is a tuple {\boldmath$\mathcal{F}$} $=(V,\mathcal{A}\cup \mathcal{E})$ where   {\boldmath $V$} is a  set of vertices, {\boldmath $\mathcal{A}$} is a set of directed hyperedges (dyperedges)  and {\boldmath $\mathcal{E}$} is a set  of hyperedges. A {\it dyperedge} {\boldmath$a$} is a tuple $(tail(a), head(a))$ where {\boldmath $head(a)$} is a single vertex in $V$ and {\boldmath $tail(a)$} is a nonempty subset of $V-head(a)$ and  a {\it hyperedge} is a subset of $V$ of size at least two.

For some $X \subseteq V$, we denote by {\boldmath $\delta_{\mathcal{E}}(X)$} the set of hyperedges $e \in \mathcal{E}$ with $e\cap X, e-X \neq \emptyset$, by {\boldmath $\delta_{\mathcal{A}}^-(X)$} the set of dyperedges $a \in \mathcal{A}$ with $head(a)\in X, tail(a)-X \neq \emptyset$. We use {\boldmath $d_{\mathcal{E}}(X)$} for $|\delta_{\mathcal{E}}(X)|$ and {\boldmath $d_{\mathcal{A}}^-(X)$} for $|\delta_{\mathcal{A}}^-(X)|$. For a single vertex $v$, we use {\boldmath $\delta_{\mathcal{E}}(v)$} instead of $\delta_{\mathcal{E}}(\{v\})$ etc. Given a function $f:V\rightarrow \mathbb{Z}$ and $X \subseteq V$, we use the notation {\boldmath $f(X)$} $=\sum_{v\in X}f(v)$ and hence we consider $f(\emptyset)=0.$

 For some $\mathcal{A}'\subseteq \mathcal{A}$ and $\mathcal{E}'\subseteq \mathcal{E}$, we denote by {\boldmath$V(\mathcal{A}'\cup \mathcal{E}')$} the set of vertices in $V$ which are contained in at least one dyperedge in $\mathcal{A}'$ or hyperedge in $\mathcal{E}'$. A mixed hypergraph without hyperedges is a {\it directed hypergraph (dypergraph)} and a mixed hypergraph without dyperedges is a {\it hypergraph}.  The {\it underlying hypergraph} {\boldmath $\mathcal{H}_{\mathcal{F}}$} $=(V,\mathcal{E}_{\mathcal{A}}\cup \mathcal{E})$ of $\mathcal{F}$ is obtained by replacing every dyperedge $a \in \mathcal{A}$ by the hyperedge $head(a)\cup tail(a)$. For a hyperedge $e \in \mathcal{E}$, its corresponding {\it bundle} {\boldmath $\mathcal{A}_e$} is the set of all possible orientations of $e$, i.e. $\mathcal{A}_e=\{(e-v,v):v \in e\}$. The {\it directed extension} {\boldmath $\mathcal{D}_{\mathcal{F}}$} $=(V,\mathcal{A}\cup \mathcal{A}_{\mathcal{E}})$ of $\mathcal{F}$ is obtained by replacing every hyperedge in $\mathcal{E}$ by its corresponding bundle, i.e. {\boldmath $\mathcal{A}_{\mathcal{E}}$} $=\cup_{e \in \mathcal{E}}\mathcal{A}_e$.
 A packing of $k$ spanning hyperarborescences in $\mathcal{D}_{\mathcal{F}}$ is called {\it $(k,f,g)$-feasible} if every vertex $v \in V$ is the root of at least $f(v)$ and at most $g(v)$ of the $k$ hyperarborescences and for every $e \in \mathcal{E}$, at most one dyperedge of the bundle $\mathcal{A}_e$ is contained in the dyperedge set of the packing.
We say that $\mathcal{F}$ is a {\it mixed graph} if each dyperedge has a tail of size exactly one and each hyperedge contains exactly two vertices. 

{\it Trimming} a dyperedge $a$ means that $a$ is replaced by an arc $uv$ with $v=head(a)$ and $u\in tail(a)$.
{\it Trimming} a hyperedge $e$ means that $e$ is replaced by an arc $uv$ for some $u\neq v\in e$.
The mixed hypergraph $\mathcal{H}$ is called a {\it mixed hyperarborescence} if its dyperedges and hyperedges can be trimmed to get an arborescence.  A {\it mixed $r$-hyperarborescence} for some $r \in V$ is a mixed hyperarborescence together with a vertex $r$  whose dyperedges and hyperedges can be trimmed to get an $r$-arborescence. A hypergraph is called a {\it (spanning) hypertree} if it can be trimmed to a (spanning) arborescence.

Given a packing $\mathcal{B}=\{\mathcal{B}_r:r \in R\}$ of mixed hyperarborescences, we use {\boldmath $\mathcal{A}(\mathcal{B})$} for $\cup_{r \in R}\mathcal{A}(\mathcal{B}_r)$ and {\boldmath $\mathcal{E}(\mathcal{B})$} for $\cup_{r \in R}\mathcal{E}(\mathcal{B}_r)$. Further, given a weight function $w:\mathcal{A}\cup \mathcal{E}\rightarrow \mathbb{R}$, we use {\boldmath $w(\mathcal{B})$} for $w(\mathcal{A}(\mathcal{B}))+w(\mathcal{E}(\mathcal{B}))$.


Given a subpartition $\mathcal{P}$ of $V$, we use {\boldmath $\mathcal{E}(\mathcal{P})$} for $\cup_{X \in \mathcal{P}}\delta_{\mathcal{E}}(X)$ and {\boldmath $\mathcal{A}(\mathcal{P})$} for $\cup_{X \in \mathcal{P}}\delta^-_{\mathcal{A}}(X)$.  We use {\boldmath $e_\mathcal{E}(\mathcal{P}), e_\mathcal{A}(\mathcal{P}), e_{\mathcal{E}\cup\mathcal{A}}(\mathcal{P})$} for the cardinality of $\mathcal{E}(\mathcal{P}),\mathcal{A}(\mathcal{P})$ and $\mathcal{E}(\mathcal{P})\cup \mathcal{A}(\mathcal{P})$, respectively. A hypergraph $\mathcal{H}=(V,\mathcal{E})$ is called {\it partition-connected} if $e_\mathcal{E}(\mathcal{P})\geq |\mathcal{P}|-1$ for every partition $\mathcal{P}$ of $V$.  Further, we use {\boldmath$\cup \mathcal{P}$} for the union of the classes of $\mathcal{P}$.

 Basic notions of matroids which are used in this article can be found in Chapter 5 of \cite{book}.

\section{Relevant matroids}

We now give an overview of the matroids we need for our characterization.

\subsection{Hypergraphic matroids}

While graphic matroids are well-studied, their generalization to hypergraphs has received significantly less attention. The following matroid construction was first observed by Lorea \cite{l}. Given a hypergraph $\mathcal{H}=(V,\mathcal{E})$, let 
{\boldmath$\mathcal{I}_{\mathcal{H}}$} $=\{\mathcal{Z}\subseteq \mathcal{E}:|V(\mathcal{Z}')|>|\mathcal{Z}'| \text{ for all } \emptyset\neq \mathcal{Z}'\subseteq \mathcal{Z}\}.$

\begin{Theorem}
The set $\mathcal{I}_{\mathcal{H}}$  is the set of independent sets of a matroid {\boldmath${\sf M}_{\mathcal{H}}$} on $\mathcal{E}$.
\end{Theorem}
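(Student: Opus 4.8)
The plan is to verify the three independence axioms for $\mathcal{I}_{\mathcal{H}}$, the only nontrivial one being the exchange axiom. That $\emptyset \in \mathcal{I}_{\mathcal{H}}$ is vacuous, and heredity is immediate, since the defining condition quantifies over all nonempty subcollections and hence only weakens when we pass to a subset. I would also record the two facts that drive the whole argument. First, since every hyperedge has at least two vertices, $|V(\{e\})| = |e| \ge 2 > 1$, so every singleton lies in $\mathcal{I}_{\mathcal{H}}$; there are no loops. Second, the set function $\mathcal{Z} \mapsto |V(\mathcal{Z})|$, being the cardinality of the union $\bigcup_{e \in \mathcal{Z}} e$, is monotone non-decreasing and submodular.

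For the exchange axiom, I take $\mathcal{Z}_1, \mathcal{Z}_2 \in \mathcal{I}_{\mathcal{H}}$ with $|\mathcal{Z}_1| < |\mathcal{Z}_2|$ and suppose for contradiction that $\mathcal{Z}_1 + e \notin \mathcal{I}_{\mathcal{H}}$ for every $e \in \mathcal{Z}_2 \setminus \mathcal{Z}_1$. Call a nonempty $\mathcal{Y} \subseteq \mathcal{Z}_1$ \emph{tight} if $|V(\mathcal{Y})| = |\mathcal{Y}| + 1$. For a fixed such $e$, the failure of the independence condition yields a nonempty $\mathcal{W} \subseteq \mathcal{Z}_1 + e$ with $|V(\mathcal{W})| \le |\mathcal{W}|$; since $\mathcal{Z}_1$ is independent we must have $e \in \mathcal{W}$, so $\mathcal{W} = \mathcal{Y}_e + e$ with $\emptyset \neq \mathcal{Y}_e \subseteq \mathcal{Z}_1$ (the case $\mathcal{Y}_e = \emptyset$ being excluded by the no-loop remark). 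Combining $|V(\mathcal{Y}_e + e)| \le |\mathcal{Y}_e| + 1$ with monotonicity and the independence bound $|V(\mathcal{Y}_e)| \ge |\mathcal{Y}_e| + 1$ forces equality throughout, so $\mathcal{Y}_e$ is tight and $V(e) \subseteq V(\mathcal{Y}_e)$.

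The heart of the proof is then to understand how tight sets combine, using submodularity. An uncrossing computation shows that if $\mathcal{Y}, \mathcal{Y}'$ are tight and $\mathcal{Y} \cap \mathcal{Y}' \neq \emptyset$, then $\mathcal{Y} \cup \mathcal{Y}'$ is tight as well; hence the inclusionwise maximal tight subsets $\mathcal{T}_1, \dots, \mathcal{T}_m$ of $\mathcal{Z}_1$ are pairwise disjoint as subsets of $\mathcal{E}$. A short further count, again via the union-cardinality estimate, shows that two distinct $\mathcal{T}_i, \mathcal{T}_j$ can share neither two or more vertices (this would contradict independence of $\mathcal{Z}_1$) nor exactly one (their union would then be tight, contradicting maximality); thus the vertex sets $V(\mathcal{T}_1), \dots, V(\mathcal{T}_m)$ are pairwise disjoint. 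I expect this structural step, pinning down that the maximal tight sets are genuinely vertex-disjoint, to be the main obstacle, since it is precisely what makes the final counting clean.

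With the $V(\mathcal{T}_i)$ pairwise disjoint, the argument closes by charging. Each $e \in \mathcal{Z}_2$ with $V(e) \subseteq V(\mathcal{T}_i)$ is assigned to the (then unique) class $\mathcal{C}_i$; every $e \in \mathcal{Z}_2 \setminus \mathcal{Z}_1$ lands in some class because $V(e) \subseteq V(\mathcal{Y}_e) \subseteq V(\mathcal{T}_{i(e)})$, while the leftover elements $\mathcal{C}_0 = \mathcal{Z}_2 \setminus \bigcup_i \mathcal{C}_i$ all lie in $\mathcal{Z}_1 \setminus \bigcup_i \mathcal{T}_i$. Independence of $\mathcal{Z}_2$ gives $|\mathcal{C}_i| \le |V(\mathcal{T}_i)| - 1 = |\mathcal{T}_i|$ for each $i$, together with $|\mathcal{C}_0| \le |\mathcal{Z}_1 \setminus \bigcup_i \mathcal{T}_i|$, whence $|\mathcal{Z}_2| \le |\mathcal{Z}_1|$, contradicting $|\mathcal{Z}_1| < |\mathcal{Z}_2|$. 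This establishes the exchange axiom and hence that $\mathcal{I}_{\mathcal{H}}$ is the independent-set family of a matroid ${\sf M}_{\mathcal{H}}$. An alternative, less hands-on route would be to recognize $\mathcal{I}_{\mathcal{H}}$ as the matroid obtained from the monotone submodular function $\mathcal{Z} \mapsto |V(\mathcal{Z})|$ by Dilworth truncation, but the direct verification above seems cleaner to present.
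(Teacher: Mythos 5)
Your proof is correct. Note, however, that the paper does not prove this statement at all: it is attributed to Lorea and used as a black box, so there is no ``paper proof'' to match against. Your direct verification of the exchange axiom is a sound and self-contained substitute. The key steps all check out: the reduction of a failed augmentation $\mathcal{Z}_1+e$ to a tight set $\mathcal{Y}_e\subseteq\mathcal{Z}_1$ with $V(\{e\})\subseteq V(\mathcal{Y}_e)$ (using that hyperedges have size at least two to exclude $\mathcal{Y}_e=\emptyset$); the uncrossing of tight sets via submodularity of the coverage function, which makes the maximal tight sets $\mathcal{T}_1,\dots,\mathcal{T}_m$ pairwise disjoint; the two-case count showing the $V(\mathcal{T}_i)$ are even vertex-disjoint (two shared vertices contradict independence of $\mathcal{Z}_1$, one shared vertex contradicts maximality); and the final charging argument giving $|\mathcal{Z}_2|\le\sum_i|\mathcal{T}_i|+|\mathcal{Z}_1\setminus\bigcup_i\mathcal{T}_i|=|\mathcal{Z}_1|$. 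This is essentially the classical argument for matroids induced by intersecting submodular functions, specialized to $b(\mathcal{Z})=|V(\mathcal{Z})|-1$, which is also what your closing remark about Dilworth truncation points to; the general machinery (available in Chapter 13 of the Frank book the paper cites) would shorten the write-up, but your hands-on version has the advantage of being elementary and of exhibiting the tight-set structure explicitly. One cosmetic point: write $V(\{e\})$ rather than $V(e)$ to stay consistent with the paper's notation for the vertex set covered by a subfamily of hyperedges.
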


The matroid ${\sf M}_{\mathcal{H}}$ is called the {\it hyperforest matroid} of the hypergraph $\mathcal{H}$.

For our algorithmic result, we need to show that an independence oracle for ${\sf M}_{\mathcal{H}}$ exists. In order to do so, we require the following two preliminaries.
The first result can be found as Corollary 2.6 in \cite{fkk2}.
\begin{Proposition}\label{rangpartcon}
Let $\mathcal{H}=(V,\mathcal{E})$ be a hypergraph. Then $r_{{\sf M}_{\mathcal{H}}}(\mathcal{E})=|V|-1$ if and only if $\mathcal{H}$ is partition-connected.
\end{Proposition}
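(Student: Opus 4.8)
The plan is to prove both inequalities directly from the definition of $\mathcal{I}_{\mathcal{H}}$, using only one auxiliary fact about trimming: a set of hyperedges that can be trimmed to a forest is independent in ${\sf M}_{\mathcal{H}}$. This fact is immediate. If $\mathcal{Z}$ trims to a forest $F$, then for every nonempty $\mathcal{Z}'\subseteq \mathcal{Z}$ the $|\mathcal{Z}'|$ edges obtained by trimming $\mathcal{Z}'$ form a subforest incident to at least $|\mathcal{Z}'|+1$ vertices, all of which lie in $V(\mathcal{Z}')$, so $|V(\mathcal{Z}')|>|\mathcal{Z}'|$. I first record the trivial bound $r_{{\sf M}_{\mathcal{H}}}(\mathcal{E})\le |V|-1$, which holds because any independent $\mathcal{Z}$ satisfies $|\mathcal{Z}|<|V(\mathcal{Z})|\le |V|$.

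For the direction that $r_{{\sf M}_{\mathcal{H}}}(\mathcal{E})=|V|-1$ implies partition-connectedness, I argue the contrapositive. Suppose $\mathcal{H}$ is not partition-connected, so there is a partition $\mathcal{P}=\{X_1,\dots,X_t\}$ of $V$ with $e_{\mathcal{E}}(\mathcal{P})\le t-2$. For an arbitrary independent set $\mathcal{Z}$, I split $\mathcal{Z}$ into the hyperedges crossing $\mathcal{P}$ and those contained in a single class. The crossing ones number at most $e_{\mathcal{E}}(\mathcal{P})\le t-2$, while for each class $X_j$ the hyperedges of $\mathcal{Z}$ contained in $X_j$ form an independent subset with vertex set inside $X_j$ and hence number at most $|X_j|-1$. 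Summing yields $|\mathcal{Z}|\le (t-2)+\sum_{j}(|X_j|-1)=|V|-2$, so $r_{{\sf M}_{\mathcal{H}}}(\mathcal{E})\le |V|-2<|V|-1$.

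For the converse, that partition-connectedness implies $r_{{\sf M}_{\mathcal{H}}}(\mathcal{E})=|V|-1$, I build an independent set of size $|V|-1$ greedily while maintaining an explicit trimming. I keep a pair $(\mathcal{Z},F)$ where $\mathcal{Z}\subseteq\mathcal{E}$ is trimmed to the forest $F$ on $V$, starting from $(\emptyset,F)$ with $F$ edgeless. At each step, let $\mathcal{P}_F$ be the partition of $V$ into the vertex sets of the connected components of $F$, so $|\mathcal{P}_F|=|V|-|\mathcal{Z}|$. As long as $|\mathcal{Z}|<|V|-1$ we have $|\mathcal{P}_F|\ge 2$, and partition-connectedness forces $e_{\mathcal{E}}(\mathcal{P}_F)\ge |\mathcal{P}_F|-1\ge 1$; hence some hyperedge $e$ crosses $\mathcal{P}_F$, that is, $e$ meets two distinct components $X,X'$. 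Trimming $e$ to an edge between a vertex of $e\cap X$ and a vertex of $e\cap X'$ merges these two components, so $F+e$ is again a forest and $(\mathcal{Z}+e,F+e)$ is a valid pair with $\mathcal{Z}+e$ independent by the trimming fact above. Iterating until $|\mathcal{Z}|=|V|-1$ produces the desired basis.

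The step I expect to require the most care is the converse: one must invoke partition-connectedness on exactly the component partition $\mathcal{P}_F$ of the current forest, so that the absence of an augmenting hyperedge translates precisely into $e_{\mathcal{E}}(\mathcal{P}_F)=0$, contradicting $e_{\mathcal{E}}(\mathcal{P}_F)\ge |\mathcal{P}_F|-1$. The tempting alternative of fixing one maximal independent set and reading off its component partition fails, because a hyperedge already in the set may legitimately spread its vertices over several components through the vertices it does not use for trimming; the incremental construction above circumvents this issue by carrying its own trimming along. The easy counting direction, by contrast, needs nothing beyond the defining inequality of $\mathcal{I}_{\mathcal{H}}$.
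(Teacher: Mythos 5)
The paper does not actually prove this proposition; it imports it as Corollary 2.6 of \cite{fkk2}, so your argument has to stand entirely on its own. Your easy direction does: the bound $r_{{\sf M}_{\mathcal H}}(\mathcal E)\le |V|-1$ and the counting argument showing that a partition $\mathcal P=\{X_1,\dots,X_t\}$ with $e_{\mathcal E}(\mathcal P)\le t-2$ forces every independent set to have size at most $(t-2)+\sum_j(|X_j|-1)=|V|-2$ are both correct consequences of the defining inequality of $\mathcal I_{\mathcal H}$.

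The converse, however, has a genuine gap. Partition-connectedness gives $e_{\mathcal E}(\mathcal P_F)\ge|\mathcal P_F|-1\ge 1$, i.e.\ \emph{some} hyperedge crossing $\mathcal P_F$, but not one outside $\mathcal Z$: a hyperedge already in $\mathcal Z$ is trimmed to an edge inside a single component of $F$ yet may still meet several components through its unused vertices, so all the crossing hyperedges the inequality guarantees may already have been used. Concretely, take $V=\{1,2,3,4\}$ and $\mathcal E=\{e_1,e_2,e_3,e_4\}$ with $e_1=\{1,2,3\}$, $e_2=\{1,3,4\}$, $e_3=\{1,2\}$, $e_4=\{3,4\}$; this hypergraph is partition-connected and has rank $3$ (trim $e_3$ to $12$, $e_1$ to $23$, $e_2$ to $34$). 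But if your greedy procedure trims $e_1$ to $12$ and then $e_2$ to $34$, the components are $\{1,2\}$ and $\{3,4\}$, the only crossing hyperedges are $e_1$ and $e_2$, both already in $\mathcal Z$, and the procedure halts at $|\mathcal Z|=2<|V|-1$. So the incremental construction does not, as your last paragraph claims, circumvent the difficulty; it runs into exactly the obstruction you describe. Repairing it requires an augmentation step that re-trims hyperedges already in $\mathcal Z$ (here, re-trim $e_2$ to $13$, after which $e_4$ becomes available), together with a termination argument --- essentially the exchange argument underlying the matroid-union/Tutte--Nash-Williams proofs, which is the nontrivial content of the cited result. Alternatively, within this paper's toolkit the proposition is immediate from Theorem \ref{hyperrank} with $k=1$: the minimum of $e_{\mathcal E}(\mathcal P)+|V|-|\mathcal P|$ is at most $|V|-1$ (take $\mathcal P=\{V\}$), and it equals $|V|-1$ precisely when $e_{\mathcal E}(\mathcal P)\ge|\mathcal P|-1$ holds for every partition $\mathcal P$.
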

This result is useful due to the next one which can be found in \cite{book} as a comment after Theorem 9.1.22 stating that the proof of Theorem 9.1.15 is algorithmic.
\begin{Proposition}\label{yfgijl}
There is a polynomial time algorithm that decides whether a given hypergraph $\mathcal{H}$ is partition-connected.
\end{Proposition}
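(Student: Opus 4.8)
The plan is to rephrase partition-connectedness as a single sign test for the minimum of a natural objective over all partitions, and then to reduce that optimization to submodular function minimization. Recall that $\mathcal{H}=(V,\mathcal{E})$ is partition-connected precisely when $e_\mathcal{E}(\mathcal{P})\geq |\mathcal{P}|-1$ holds for every partition $\mathcal{P}$ of $V$, so the hypergraph fails to be partition-connected if and only if $\min_{\mathcal{P}}\bigl(e_\mathcal{E}(\mathcal{P})-|\mathcal{P}|+1\bigr)<0$. The first step is therefore to compute, or at least determine the sign of, this minimum. To make the objective tractable I would rewrite $e_\mathcal{E}(\mathcal{P})$ in terms of the hyperedges that are \emph{not} split: for $\emptyset\neq X\subseteq V$ let $i(X)$ denote the number of hyperedges $e\in\mathcal{E}$ with $e\subseteq X$. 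Since a hyperedge lies inside at most one class of a partition, we get $e_\mathcal{E}(\mathcal{P})=|\mathcal{E}|-\sum_{X\in\mathcal{P}}i(X)$, and the partition-connectedness condition becomes the statement that $\sum_{X\in\mathcal{P}}\bigl(i(X)+1\bigr)\leq |\mathcal{E}|+1$ for every partition $\mathcal{P}$.

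The key structural observation is that $i$ is supermodular: checking the four cases according to whether a hyperedge lies in $X\cap Y$, in exactly one of $X,Y$, or only in $X\cup Y$ yields $i(X\cup Y)+i(X\cap Y)\geq i(X)+i(Y)$. Consequently, setting $b(X):=-\,i(X)-1$ defines a submodular set function (a submodular part $-i$ plus a modular shift), and $\min_{\mathcal{P}}\sum_{X\in\mathcal{P}}b(X)=-\max_{\mathcal{P}}\sum_{X\in\mathcal{P}}\bigl(i(X)+1\bigr)$. Thus $\mathcal{H}$ is partition-connected if and only if
\[
\min_{\mathcal{P}}\ \sum_{X\in\mathcal{P}}b(X)\ \geq\ -\bigl(|\mathcal{E}|+1\bigr),
\]
which is exactly the Dilworth truncation value of the submodular function $b$ evaluated at the ground set $V$. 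Minimizing a sum of a submodular function over all partitions of $V$ is a classical polynomially solvable problem, handled by the Dilworth truncation algorithm (which performs $O(|V|)$ calls to submodular function minimization). The evaluation oracle needed here is trivial and polynomial: $i(X)$ is computed by simply counting the hyperedges contained in $X$. Running this algorithm and comparing the returned value to $-(|\mathcal{E}|+1)$ decides partition-connectedness in polynomial time.

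I expect the main obstacle to be purely a matter of correctly invoking the heavy black box rather than any combinatorial subtlety: one must verify that the Dilworth truncation / partition-minimization reduces to general submodular function minimization with the claimed polynomial oracle, so that the polynomial-time guarantees of submodular minimization carry over. I would stress that this route is self-contained and, crucially, does not presuppose an independence oracle for the hyperforest matroid ${\sf M}_\mathcal{H}$, which is the very object being built from this proposition. An alternative would be to route through Proposition \ref{rangpartcon}, reformulating partition-connectedness as the rank condition $r_{{\sf M}_\mathcal{H}}(\mathcal{E})=|V|-1$; however, evaluating that rank presupposes access to ${\sf M}_\mathcal{H}$, so it would be circular in the present context, and I would avoid it in favour of the submodular-minimization argument above.
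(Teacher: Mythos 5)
Your argument is correct, and it is worth noting that the paper does not actually prove this proposition at all: it is invoked as a known result, with a pointer to Frank's book (the remark after Theorem 9.1.22 there, stating that the proof of Theorem 9.1.15 is algorithmic). So you are supplying a self-contained proof where the paper only supplies a citation. Your reduction is sound: the identity $e_\mathcal{E}(\mathcal{P})=|\mathcal{E}|-\sum_{X\in\mathcal{P}}i(X)$ holds because a hyperedge of a partition either lies inside exactly one class or crosses every class it meets; the four-case check of supermodularity of $i$ is right (and in fact holds for all pairs $X,Y$, not just intersecting ones, since $i(\emptyset)=0$ and a hyperedge inside $X\cup Y$ for disjoint $X,Y$ lies in at most one of them); and the sign test $\min_{\mathcal{P}}\sum_{X\in\mathcal{P}}b(X)\geq -(|\mathcal{E}|+1)$ is exactly equivalent to partition-connectedness. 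The one black box you lean on, that the Dilworth truncation of an (intersecting) submodular function given by a polynomial value oracle can be evaluated at $V$ in polynomial time via $O(|V|)$ calls to submodular function minimization, is classical and correctly invoked; the value oracle for $b$ is indeed trivial. Your observation about circularity is also well taken: routing through Proposition \ref{rangpartcon} and the rank of ${\sf M}_\mathcal{H}$ would presuppose the very oracle that Lemma \ref{qdfvq} is trying to build from this proposition, so the submodular-minimization route (or, equivalently, the matroid-union/Dilworth-truncation machinery underlying the result cited from Frank's book, which is essentially the same mathematics) is the right way to keep the development non-circular. The only thing I would ask you to make explicit in a final write-up is the precise statement of the Dilworth truncation algorithm you are citing, with a reference, since that is where all the algorithmic weight of the proof sits.
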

We are now ready to conclude that a polynomial time independence oracle for ${\sf M}_\mathcal{H}$ exists.
\begin{Lemma}\label{qdfvq}
Given a hypergraph $\mathcal{H}=(V,\mathcal{E})$ and $\mathcal{Z}\subseteq \mathcal{E}$, we can decide in polynomial time whether $\mathcal{Z}$ is independent in ${\sf M}_{\mathcal{H}}$. 
\end{Lemma}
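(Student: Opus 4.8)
The plan is to reformulate membership in $\mathcal{I}_\mathcal{H}$ as a sparsity condition and then to reduce the verification of that condition to polynomially many calls of the partition-connectivity routine of Proposition \ref{yfgijl}. First I would observe that independence depends only on the subhypergraph induced by $\mathcal{Z}$: since the defining condition of $\mathcal{I}_\mathcal{H}$ involves only subsets $\mathcal{Z}'\subseteq\mathcal{Z}$ and the vertex sets they span, we may delete every vertex lying in no edge of $\mathcal{Z}$ and every hyperedge outside $\mathcal{Z}$, and henceforth assume $V=V(\mathcal{Z})$ and $\mathcal{E}=\mathcal{Z}$. Writing $\mathcal{Z}[X]=\{e\in\mathcal{Z}:e\subseteq X\}$ and $i_\mathcal{Z}(X)=|\mathcal{Z}[X]|$ for $X\subseteq V$, I would prove the elementary equivalence that $\mathcal{Z}\in\mathcal{I}_\mathcal{H}$ if and only if $i_\mathcal{Z}(X)\le|X|-1$ for every nonempty $X\subseteq V$: given a violating $\mathcal{Z}'$ one takes $X=V(\mathcal{Z}')$, and given a violating $X$ one takes $\mathcal{Z}'=\mathcal{Z}[X]$. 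Equivalently, $\mathcal{Z}$ is independent precisely when $r_{{\sf M}_\mathcal{H}}(\mathcal{Z})=|\mathcal{Z}|$.

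The bridge to partition-connectivity is the following claim, which I would prove next: $\mathcal{Z}$ is dependent if and only if there is a nonempty $X\subseteq V$ such that the induced subhypergraph $(X,\mathcal{Z}[X])$ is partition-connected and $i_\mathcal{Z}(X)\ge|X|$. For the easy direction, such an $X$ immediately violates the sparsity condition. For the converse, suppose $\mathcal{Z}$ is dependent and let $\mathcal{C}\subseteq\mathcal{Z}$ be a circuit of ${\sf M}_\mathcal{H}$. Minimality forces $|V(\mathcal{C})|=|\mathcal{C}|$: dependence together with the fact that every proper subset of $\mathcal{C}$ is independent gives $|V(\mathcal{C})|\le|\mathcal{C}|$, while deleting any single edge shows $|V(\mathcal{C})|\ge|\mathcal{C}|$. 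Since a circuit has rank $|\mathcal{C}|-1=|V(\mathcal{C})|-1$, Proposition \ref{rangpartcon} applied to $(V(\mathcal{C}),\mathcal{C})$ shows that this hypergraph is partition-connected. Taking $X=V(\mathcal{C})$ and using that adding hyperedges can only increase $e_\mathcal{E}(\mathcal{P})$, the larger hypergraph $(X,\mathcal{Z}[X])\supseteq(X,\mathcal{C})$ is partition-connected as well, and $i_\mathcal{Z}(X)\ge|\mathcal{C}|=|X|$, as required.

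Finally I would turn this characterization into an algorithm. Using Proposition \ref{yfgijl} to test partition-connectivity of induced subhypergraphs, I would build a maximal independent subset $\mathcal{Y}\subseteq\mathcal{Z}$ greedily, starting from $\mathcal{Y}=\emptyset$ and adding the edges of $\mathcal{Z}$ one at a time, rejecting an edge $e$ exactly when $\mathcal{Y}+e$ already contains a circuit; by the claim, such a circuit is witnessed by a partition-connected induced subhypergraph of bounded description through $e$. Then $\mathcal{Z}$ is independent if and only if $\mathcal{Y}=\mathcal{Z}$. The main obstacle is precisely this last step: the condition ``$i_\mathcal{Z}(X)\le|X|-1$ for all $X$'' ranges over exponentially many sets, so the real work is to certify or refute it with only polynomially many partition-connectivity decisions, either by organizing the greedy augmentation so that each step makes a bounded number of calls, or by extracting a witnessing partition from the decision oracle of Proposition \ref{yfgijl} and recursing on the resulting pieces. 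One may alternatively note that $|X|-i_\mathcal{Z}(X)$ is submodular and minimize it directly, but the intent here is to stay within the combinatorial partition-connectivity framework already set up.
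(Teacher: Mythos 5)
Your reformulation of independence as the sparsity condition $i_\mathcal{Z}(X)\le|X|-1$ and your characterization of dependence via a partition-connected induced subhypergraph $(X,\mathcal{Z}[X])$ with $i_\mathcal{Z}(X)\ge|X|$ are both correct, and the circuit argument showing $|V(\mathcal{C})|=|\mathcal{C}|$ is sound. But the proof is not complete: everything hinges on finding such an $X$ (or certifying that none exists) with polynomially many calls to the routine of Proposition \ref{yfgijl}, and you explicitly leave this step open. Your greedy scheme is circular as stated --- deciding whether $\mathcal{Y}+e$ contains a circuit \emph{is} the independence test you are trying to build --- and neither of the two escape routes you sketch (extracting a witness partition from the decision oracle, or submodular minimization of $|X|-i_\mathcal{Z}(X)$) is carried out. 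So as written this is a correct structural observation plus an unresolved algorithmic gap.

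The paper closes exactly this gap with a short padding trick that you may want to compare against. Assuming without loss of generality that $|\mathcal{Z}|\le|V|-1$ (otherwise $\mathcal{Z}$ is trivially dependent), add to $(V,\mathcal{Z})$ a set $\mathcal{S}$ of $|V|-1-|\mathcal{Z}|$ hyperedges each equal to $V$, obtaining a hypergraph $\mathcal{H}'$ with exactly $|V|-1$ hyperedges. If $\mathcal{Z}$ is independent, then so is $\mathcal{Z}\cup\mathcal{S}$ (any $\mathcal{Z}'$ meeting $\mathcal{S}$ has $|V(\mathcal{Z}')|=|V|>|\mathcal{Z}\cup\mathcal{S}|\ge|\mathcal{Z}'|$), so $r_{{\sf M}_{\mathcal{H}'}}(\mathcal{Z}\cup\mathcal{S})=|V|-1$ and Proposition \ref{rangpartcon} gives partition-connectivity of $\mathcal{H}'$; conversely, partition-connectivity forces the rank to equal $|V|-1=|\mathcal{Z}\cup\mathcal{S}|$, making $\mathcal{Z}\cup\mathcal{S}$, and hence $\mathcal{Z}$, independent. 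This reduces the whole question to a \emph{single} invocation of Proposition \ref{yfgijl} on the full hypergraph $\mathcal{H}'$, with no need to search over induced subhypergraphs at all. The lesson is that Proposition \ref{rangpartcon} is most useful not as a test applied to candidate subsets $X$, but as a global rank certificate once the edge count has been normalized to $|V|-1$.
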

\begin{proof}
If $|\mathcal{Z}|\geq |V|$, it follows immediately from the definition of ${\sf M}_\mathcal{H}$ that $\mathcal{Z}$ is dependent in ${\sf M}_\mathcal{H}$. We may hence suppose that $|\mathcal{Z}|\leq |V|-1$. Let a hypergraph $\mathcal{H}'$ be obtained from $(V,\mathcal{Z})$ by adding a set $\mathcal{S}$ of $|V|-1-|\mathcal{Z}|$ hyperedges each of which equals $V$. Observe that $|\mathcal{Z}\cup \mathcal{S}|=|V|-1$.
\begin{Claim}\label{gu}
$\mathcal{Z}$ is independent in ${\sf M}_{\mathcal{H}}$ if and only if $\mathcal{H}'$ is partition-connected.
\end{Claim}
\begin{proof}
First suppose that $\mathcal{Z}$ is independent in ${\sf M}_{\mathcal{H}}$. By definition of ${\sf M}_{\mathcal{H}}$, for any $\mathcal{Z}'\subseteq \mathcal{Z}$, we have $|V(\mathcal{Z}')|>|\mathcal{Z}'|$. For any $\mathcal{Z}'\subseteq \mathcal{Z}\cup \mathcal{S}$ with $\mathcal{Z}'\cap \mathcal{S}\neq \emptyset$, we have $|V(\mathcal{Z}')|=|V|>|\mathcal{Z}\cup \mathcal{S}|\geq|\mathcal{Z}'|$. It follows that $|V(\mathcal{Z}')|>|\mathcal{Z}'|$ for all $\mathcal{Z}'\subseteq \mathcal{Z}\cup \mathcal{S}$ and so by definition, $r_{{\sf M}_{\mathcal{H}'}}(\mathcal{Z}\cup \mathcal{S})=|V|-1$. Now Proposition \ref{rangpartcon} yields that $\mathcal{H}'$ is partition-connected.
\medskip

Now suppose that $\mathcal{H}'$ is partition-connected. It follows from Proposition \ref{rangpartcon} that $r_{{\sf M}_{\mathcal{H}'}}(\mathcal{Z}\cup \mathcal{S})=|V|-1=|\mathcal{Z}\cup \mathcal{S}|$. It follows that ${\sf M}_{\mathcal{H}'}$ is the free matroid, so in particular, $\mathcal{Z}$ is independent in ${\sf M}_{\mathcal{H}'}$. It follows that $\mathcal{Z}$ is also independent in ${\sf M}_{\mathcal{H}}$.
\end{proof}
By Claim \ref{gu} and as $\mathcal{H}'$ can be constructed efficiently, it suffices to check whether $\mathcal{H}'$ is partition-connected. By Proposition \ref{yfgijl}, this can be done in polynomial time.
\end{proof}

We also need the $k$-sum matroid of ${\sf M}_{\mathcal{H}}$, that is the matroid on ground set $\mathcal{E}$ in which a subset of $\mathcal{E}$ is independent if it can be partitioned into $k$  independent sets of ${\sf M}_{\mathcal{H}}$. We call this matroid {\it $k$-hyperforest matroid} and refer to it as {\boldmath${\sf M}_{\mathcal{H}}^k$}. The following formula for the rank function of ${\sf M}_{\mathcal{H}}^k$ was proved by Frank, Kir\'aly and Kriesell \cite{fkk2}.

\begin{Theorem}\label{hyperrank}
For all $\mathcal{Z}\subseteq \mathcal{E}$, we have
$r_{{\sf M}_{\mathcal{H}}^k}(\mathcal{Z})=\min\{e_\mathcal{Z}(\mathcal{P})+k(|V|-|\mathcal{P}|):\mathcal{P}\text{ a partition of }V\}.$
\end{Theorem}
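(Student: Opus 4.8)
The plan is to combine two standard tools of matroid theory: the rank formula for the matroid induced by an intersecting submodular function, applied to ${\sf M}_{\mathcal{H}}$ itself, and the matroid union theorem, applied to express ${\sf M}_{\mathcal{H}}^k$ as the union of $k$ copies of ${\sf M}_{\mathcal{H}}$ (which is exactly what the definition of the $k$-hyperforest matroid says).

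First I would establish the rank formula for a single hyperforest matroid, namely $r_{{\sf M}_{\mathcal{H}}}(\mathcal{Y})=\min_{\mathcal{P}}(e_{\mathcal{Y}}(\mathcal{P})+|V|-|\mathcal{P}|)$ over partitions $\mathcal{P}$ of $V$. By the defining inequality of $\mathcal{I}_{\mathcal{H}}$, the matroid ${\sf M}_{\mathcal{H}}$ is induced by the intersecting submodular function $b(\mathcal{F})=|V(\mathcal{F})|-1$, so the standard rank formula for such matroids gives $r_{{\sf M}_{\mathcal{H}}}(\mathcal{Y})=\min\{|\mathcal{Y}\setminus\cup_j\mathcal{F}_j|+\sum_j(|V(\mathcal{F}_j)|-1)\}$ over families of disjoint nonempty subsets $\mathcal{F}_j\subseteq\mathcal{Y}$. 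The crucial step is an uncrossing argument: whenever two parts $\mathcal{F}_i,\mathcal{F}_j$ have intersecting vertex sets, merging them into $\mathcal{F}_i\cup\mathcal{F}_j$ does not increase the objective because $|V(\cdot)|$ is submodular, and any edge of $\mathcal{Y}$ lying inside some $V(\mathcal{F}_j)$ can be absorbed into $\mathcal{F}_j$ at no cost. After these operations the sets $V(\mathcal{F}_j)$ are pairwise disjoint and each $\mathcal{F}_j$ consists of precisely the edges of $\mathcal{Y}$ inside $V(\mathcal{F}_j)$; completing $\{V(\mathcal{F}_j)\}$ to a partition $\mathcal{P}$ of $V$ by adding singletons turns $\sum_j(|V(\mathcal{F}_j)|-1)$ into $|V|-|\mathcal{P}|$ and $|\mathcal{Y}\setminus\cup_j\mathcal{F}_j|$ into $e_{\mathcal{Y}}(\mathcal{P})$, yielding the claimed formula.

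Next I would invoke the matroid union theorem, which gives $r_{{\sf M}_{\mathcal{H}}^k}(\mathcal{Z})=\min_{\mathcal{Y}\subseteq\mathcal{Z}}(|\mathcal{Z}\setminus\mathcal{Y}|+k\cdot r_{{\sf M}_{\mathcal{H}}}(\mathcal{Y}))$. For the inequality $\leq$, fix any partition $\mathcal{P}$ of $V$ and take $\mathcal{Y}$ to be the edges of $\mathcal{Z}$ lying inside the classes of $\mathcal{P}$; then $|\mathcal{Z}\setminus\mathcal{Y}|=e_{\mathcal{Z}}(\mathcal{P})$ and, applying the single-matroid formula with this same $\mathcal{P}$, $r_{{\sf M}_{\mathcal{H}}}(\mathcal{Y})\leq|V|-|\mathcal{P}|$, which gives the bound for every $\mathcal{P}$. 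For the reverse inequality $\geq$, let $\mathcal{Y}^{*}$ attain the outer minimum and let $\mathcal{P}^{*}$ attain the single-matroid formula for $\mathcal{Y}^{*}$, so that $r_{{\sf M}_{\mathcal{H}}}(\mathcal{Y}^{*})=e_{\mathcal{Y}^{*}}(\mathcal{P}^{*})+|V|-|\mathcal{P}^{*}|$. It then suffices to prove $|\mathcal{Z}\setminus\mathcal{Y}^{*}|+k\,e_{\mathcal{Y}^{*}}(\mathcal{P}^{*})\geq e_{\mathcal{Z}}(\mathcal{P}^{*})$, which holds because each edge of $\mathcal{Z}$ crossing $\mathcal{P}^{*}$ either lies in $\mathcal{Y}^{*}$, hence is one of the $e_{\mathcal{Y}^{*}}(\mathcal{P}^{*})$ edges counted with the favorable factor $k\geq 1$, or lies in $\mathcal{Z}\setminus\mathcal{Y}^{*}$, hence is counted by $|\mathcal{Z}\setminus\mathcal{Y}^{*}|$. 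Adding $k(|V|-|\mathcal{P}^{*}|)$ yields $r_{{\sf M}_{\mathcal{H}}^k}(\mathcal{Z})\geq e_{\mathcal{Z}}(\mathcal{P}^{*})+k(|V|-|\mathcal{P}^{*}|)\geq\min_{\mathcal{P}}(\ldots)$, so the two bounds meet.

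I expect the main obstacle to be the single-matroid rank formula, that is, the uncrossing step that converts the arbitrary optimal subfamilies produced by the induced-matroid rank formula into honest classes of a partition of $V$; this is the place where submodularity of $|V(\cdot)|$ is genuinely used. Once that is settled, the passage to $k$ copies is routine, the only delicate point being the accounting in the $\geq$ direction: crossing edges outside $\mathcal{Y}^{*}$ are charged to the term $|\mathcal{Z}\setminus\mathcal{Y}^{*}|$, while crossing edges inside $\mathcal{Y}^{*}$ benefit from the factor $k\geq 1$, and it is precisely this asymmetry that makes the lower and upper bounds coincide.
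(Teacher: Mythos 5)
The paper does not actually prove Theorem \ref{hyperrank}: it is imported verbatim from Frank, Kir\'aly and Kriesell \cite{fkk2}, so there is no in-paper argument to compare against. Your proof is correct and is essentially the standard derivation (and, in substance, the one in the cited reference): view ${\sf M}_{\mathcal{H}}$ as the matroid induced by the intersecting submodular function $b(\mathcal{F})=|V(\mathcal{F})|-1$, uncross the optimal subfamilies in the induced-matroid rank formula into a genuine partition of $V$, and then apply the matroid union rank formula to the $k$-fold sum. The uncrossing step works exactly as you say, since merging $\mathcal{F}_i$ and $\mathcal{F}_j$ with $V(\mathcal{F}_i)\cap V(\mathcal{F}_j)\neq\emptyset$ changes $\sum_j b(\mathcal{F}_j)$ by $-(|V(\mathcal{F}_i)\cap V(\mathcal{F}_j)|-1)\le 0$, absorbing induced edges is free, and padding with singleton classes is harmless because every hyperedge has at least two vertices (so it cannot lie inside a singleton). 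The final accounting in the $\geq$ direction is also right, with the one small caveat that it uses $k\geq 1$ (the charge of crossing edges of $\mathcal{Y}^{*}$ to the term $k\,e_{\mathcal{Y}^{*}}(\mathcal{P}^{*})$ needs the factor $k$ to be at least $1$); for $k=0$ the statement is trivial anyway. If you wrote this up, the only part requiring real care is the precise hypothesis of the induced-matroid rank theorem you invoke (monotone, intersecting submodular, positive on nonempty sets), which $|V(\cdot)|-1$ satisfies here.
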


We now extend the previous construction to mixed hypergraphs. Let $\mathcal{F}=(V,\mathcal{A}\cup \mathcal{E})$ be a mixed hypergraph,  $\mathcal{H}_{\mathcal{F}}=(V,\mathcal{E}_{\mathcal{A}}\cup \mathcal{E})$  the underlying hypergraph of $\mathcal{F}$ and  $\mathcal{D}_{\mathcal{F}}=(V,\mathcal{A}\cup \mathcal{A}_{\mathcal{E}})$ the  directed extension  of $\mathcal{F}$. 
We now construct the {\it extended $k$-hyperforest matroid} {\boldmath${\sf M}_{\mathcal{F}}^k$} on $\mathcal{A}\cup\mathcal{A}_{\mathcal{E}}$ from ${\sf M}_{\mathcal{H}_{\mathcal{F}}}^k$ by replacing every $e \in \mathcal{E}$ by $|e|$ parallel copies of itself, associating these elements to the dyperedges in $\mathcal{A}_e$ and associating every element of $\mathcal{E}_\mathcal{A}$ to the corresponding element in $\mathcal{A}$. We give the following formula for the rank function of ${\sf M}_{\mathcal{F}}^k$.

\begin{Proposition}\label{r1}
For all $\mathcal{Z}\subseteq \mathcal{A}\cup \mathcal{A}_{\mathcal{E}}$, we have
\begin{equation}
r_{{\sf M}_{\mathcal{F}}^k}(\mathcal{Z})=\min\{|\mathcal{Z}\cap \mathcal{A}(\mathcal{P})|+|\{e \in \mathcal{E}(\mathcal{P}):\mathcal{Z}\cap \mathcal{A}_e \neq \emptyset\}|+k(|V|-|\mathcal{P}|):\mathcal{P}\text{ a partition of }V\}.
\end{equation}
\end{Proposition}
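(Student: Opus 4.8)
The plan is to exploit the fact that, by construction, ${\sf M}_{\mathcal{F}}^k$ arises from the $k$-hyperforest matroid ${\sf M}_{\mathcal{H}_{\mathcal{F}}}^k$ of the underlying hypergraph by a sequence of parallel extensions and relabelings: each hyperedge $e\in\mathcal{E}$ is blown up into $|e|$ mutually parallel elements, namely the orientations forming the bundle $\mathcal{A}_e$, while each element of $\mathcal{E}_{\mathcal{A}}$ is simply renamed as the corresponding dyperedge in $\mathcal{A}$. First I would record the standard matroid fact that rank is insensitive to parallel copies: for any $\mathcal{Z}\subseteq\mathcal{A}\cup\mathcal{A}_{\mathcal{E}}$, collapsing each bundle $\mathcal{A}_e$ back to the single hyperedge $e$ (keeping $e$ precisely when $\mathcal{Z}\cap\mathcal{A}_e\neq\emptyset$) and identifying $\mathcal{Z}\cap\mathcal{A}$ with the corresponding subset of $\mathcal{E}_{\mathcal{A}}$ produces a set $\mathcal{Y}\subseteq\mathcal{E}_{\mathcal{A}}\cup\mathcal{E}$ with $r_{{\sf M}_{\mathcal{F}}^k}(\mathcal{Z})=r_{{\sf M}_{\mathcal{H}_{\mathcal{F}}}^k}(\mathcal{Y})$.

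Next I would apply Theorem \ref{hyperrank} directly to $\mathcal{Y}$ in $\mathcal{H}_{\mathcal{F}}$, giving $r_{{\sf M}_{\mathcal{H}_{\mathcal{F}}}^k}(\mathcal{Y})=\min\{e_{\mathcal{Y}}(\mathcal{P})+k(|V|-|\mathcal{P}|):\mathcal{P}\text{ a partition of }V\}$. It then remains only to rewrite $e_{\mathcal{Y}}(\mathcal{P})$, the number of hyperedges of $\mathcal{Y}$ crossing $\mathcal{P}$, as the two terms appearing in the claimed formula. Splitting $\mathcal{Y}$ into its part in $\mathcal{E}$ and its part in $\mathcal{E}_{\mathcal{A}}$, the $\mathcal{E}$-part contributes exactly $|\{e\in\mathcal{E}(\mathcal{P}):\mathcal{Z}\cap\mathcal{A}_e\neq\emptyset\}|$, since $e\in\mathcal{E}$ crosses $\mathcal{P}$ iff $e\in\mathcal{E}(\mathcal{P})$ and $e$ belongs to $\mathcal{Y}$ iff $\mathcal{Z}\cap\mathcal{A}_e\neq\emptyset$.

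The one point requiring a short verification, and the only real obstacle, is that the $\mathcal{E}_{\mathcal{A}}$-part contributes exactly $|\mathcal{Z}\cap\mathcal{A}(\mathcal{P})|$. Here I would use that $\mathcal{P}$ is a \emph{full} partition of $V$: for a dyperedge $a$, its head lies in a unique class $X$, and the underlying hyperedge $head(a)\cup tail(a)$ crosses $\mathcal{P}$ if and only if some vertex of the edge lies outside $X$, i.e. $tail(a)-X\neq\emptyset$, which is precisely the condition $a\in\delta_{\mathcal{A}}^-(X)$. Since $a$ can belong to $\delta_{\mathcal{A}}^-(X')$ only for $X'=X$, this shows the underlying hyperedge of $a$ crosses $\mathcal{P}$ iff $a\in\mathcal{A}(\mathcal{P})$, with no double counting; restricting to $a\in\mathcal{Z}\cap\mathcal{A}$ yields the count $|\mathcal{Z}\cap\mathcal{A}(\mathcal{P})|$. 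Combining the two contributions gives $e_{\mathcal{Y}}(\mathcal{P})=|\mathcal{Z}\cap\mathcal{A}(\mathcal{P})|+|\{e\in\mathcal{E}(\mathcal{P}):\mathcal{Z}\cap\mathcal{A}_e\neq\emptyset\}|$ for every partition $\mathcal{P}$, and substituting this into the expression obtained from Theorem \ref{hyperrank} produces the desired rank formula.
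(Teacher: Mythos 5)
Your proof is correct and follows essentially the same route as the paper's: both exploit that the elements of each bundle $\mathcal{A}_e$ are parallel in ${\sf M}_{\mathcal{F}}^k$ to reduce the computation to the rank of a corresponding set in ${\sf M}_{\mathcal{H}_{\mathcal{F}}}^k$, and then apply Theorem \ref{hyperrank} and translate the crossing counts. Your explicit verification that, for a full partition, the underlying hyperedge of a dyperedge $a$ crosses $\mathcal{P}$ exactly when $a\in\mathcal{A}(\mathcal{P})$ is a detail the paper leaves implicit, but the argument is the same.
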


\begin{proof}
Let {\boldmath$\mathcal{Z}'$} be obtained from $\mathcal{Z}$ by deleting all but one element of $\mathcal{Z} \cap \mathcal{A}_e$ for all $e \in \mathcal{E}$ with $|\mathcal{Z} \cap \mathcal{A}_e|\geq 2$. As all elements in $\mathcal{A}_e$ are parallel in ${\sf M}_{\mathcal{F}}^k$, we obtain that $r_{{\sf M}_{\mathcal{F}}^k}(\mathcal{Z})=r_{{\sf M}_{\mathcal{F}}^k}(\mathcal{Z}')$. As $|\mathcal{Z}' \cap \mathcal{A}_e|\leq 1$ for every $e \in \mathcal{E}$, there exists a matroid ${\sf M}'$ that is isomorphic to ${\sf M}^k_{\mathcal{H}}$, is a restriction of ${\sf M}^k_{\mathcal{F}}$ and whose ground set contains $\mathcal{Z}'$. It follows from Theorem \ref{hyperrank} that
$r_{{\sf M}_{\mathcal{F}}^k}(\mathcal{Z})=r_{{\sf M}_{\mathcal{F}}^k}(\mathcal{Z}')=r_{{\sf M}'}(\mathcal{Z}')
=\min\{|\mathcal{Z}'\cap\mathcal{A}(\mathcal{P})|+|\mathcal{Z}'\cap\mathcal{E}(\mathcal{P})|+k(|V|-|\mathcal{P}|):\mathcal{P}\text{ a partition of }V \}=\min\{|\mathcal{Z}\cap \mathcal{A}(\mathcal{P})|+|\{e \in \mathcal{E}(\mathcal{P}):\mathcal{Z}\cap \mathcal{A}_e \neq \emptyset\}|+k(|V|-|\mathcal{P}|):\mathcal{P}\text{ a partition of }V\}.$
\end{proof}

Again, for the algorithmic part, we need to show that an independence oracle for ${\sf M}_{\mathcal{F}}^k$ is available. We need the following preliminary result on matroids which was proven by Edmonds \cite{e1}.
\begin{Proposition}\label{tfuyg}
Let ${\sf M}$ be a matroid such that a polynomial time independence oracle for ${\sf M}$ is available and $k$ a positive integer. Then a polynomial time independence oracle for the $k$-sum of ${\sf M}$ is also available.
\end{Proposition}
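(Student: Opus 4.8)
The plan is to reduce the independence test in the $k$-sum to a single instance of matroid intersection, so that Theorem~\ref{matroidintersection}(b) can be invoked directly. By definition, a set $Z$ is independent in the $k$-sum of ${\sf M}$ exactly when $Z$ can be partitioned into $k$ sets each independent in ${\sf M}$; thus an independence oracle for the $k$-sum amounts to deciding, for a given $Z$, whether such a partition exists.

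To model this, I would take the ground set $Z\times\{1,\dots,k\}$, thinking of $(z,j)$ as ``placing $z$ into the $j$-th part''. Let ${\sf N}_1$ be the direct sum of $k$ copies of the restriction of ${\sf M}$ to $Z$, where the $j$-th copy governs the elements $\{(z,j):z\in Z\}$; that is, $T\subseteq Z\times\{1,\dots,k\}$ is independent in ${\sf N}_1$ iff for every colour $j$ the set $\{z:(z,j)\in T\}$ is independent in ${\sf M}$. Let ${\sf N}_2$ be the partition matroid whose classes are the sets $\{z\}\times\{1,\dots,k\}$, each of capacity one. A common independent set $T$ of ${\sf N}_1$ and ${\sf N}_2$ of size $|Z|$ must contain exactly one copy $(z,j(z))$ of each $z\in Z$, and its independence in ${\sf N}_1$ says precisely that each colour class $\{z:j(z)=j\}$ is ${\sf M}$-independent. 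Hence $Z$ is independent in the $k$-sum if and only if ${\sf N}_1$ and ${\sf N}_2$ admit a common independent set of size $|Z|$.

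It remains to supply polynomial independence oracles for ${\sf N}_1$ and ${\sf N}_2$. For ${\sf N}_1$ this is immediate: testing independence of $T$ requires only $k$ calls to the given oracle of ${\sf M}$, one per colour class. The oracle for the partition matroid ${\sf N}_2$ is trivial. Since the ground set $Z\times\{1,\dots,k\}$ has polynomial size, Theorem~\ref{matroidintersection}(b) applied with $\mu=|Z|$ decides in polynomial time whether the required common independent set exists, which is exactly the desired oracle for the $k$-sum.

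I do not expect a serious obstacle here; the only points to verify carefully are the size-$|Z|$ correspondence of the previous paragraph (a counting argument using that ${\sf N}_2$ forbids two copies of the same element) and the correctness of the composite oracle for the direct sum ${\sf N}_1$. As an alternative avoiding Theorem~\ref{matroidintersection}, one could run Edmonds' matroid union algorithm on $k$ copies of ${\sf M}$ directly, repeatedly enlarging disjoint independent sets $I_1,\dots,I_k$ by augmenting along a shortest path in the associated exchange graph; the heart of that route is the correctness of the augmentation step, which is precisely the content of matroid union theory.
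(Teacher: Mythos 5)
Your reduction is correct, and it is a genuinely different route from the paper's: the paper gives no proof of this proposition at all, but simply cites Edmonds' matroid partition paper \cite{e1}, whose content is exactly the direct matroid-union algorithm you sketch as an alternative at the end. Your main argument instead derives the oracle from weighted/unweighted matroid intersection (Theorem~\ref{matroidintersection}(b)), which the paper already assumes as a black box, so your proof is self-contained relative to the paper's toolkit. The reduction itself is sound: with ${\sf N}_1$ the direct sum of $k$ copies of ${\sf M}|_Z$ on $Z\times\{1,\dots,k\}$ and ${\sf N}_2$ the partition matroid with classes $\{z\}\times\{1,\dots,k\}$ of capacity one, a common independent set of size $|Z|$ selects exactly one colour $j(z)$ per element, and ${\sf N}_1$-independence is precisely the statement that the colour classes form a partition of $Z$ into $k$ ${\sf M}$-independent sets (empty parts being harmless); the converse direction is immediate. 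The composite oracle for ${\sf N}_1$ ($k$ calls to the oracle of ${\sf M}$) and the trivial oracle for ${\sf N}_2$ are fine. The only point worth adding explicitly is that the ground set $Z\times\{1,\dots,k\}$ is of polynomial size only if $k$ is polynomially bounded; since a partition of $Z$ into $k$ independent sets with $k>|Z|$ can always be pruned to one with at most $|Z|$ nonempty parts, you may replace $k$ by $\min\{k,|Z|\}$ before building the instance, which closes this gap. What each approach buys: Edmonds' matroid partition algorithm is the more direct and historically primary route (and also yields the rank formula of Theorem~\ref{hyperrank}), whereas your intersection-based argument avoids introducing any machinery beyond what the paper already invokes for its main results.
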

\begin{Lemma}\label{obstacle}
Given a mixed hypergraph $\mathcal{F}=(V,\mathcal{A}\cup \mathcal{E})$ and $\mathcal{Z}\subseteq \mathcal{A}\cup \mathcal{A}_\mathcal{E}$, we can decide in polynomial time if $\mathcal{Z}$ is independent in ${\sf M}_{\mathcal{F}}^k$.
\end{Lemma}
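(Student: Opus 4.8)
The plan is to reduce the independence test in ${\sf M}_{\mathcal{F}}^k$ to the independence test for the $k$-hyperforest matroid ${\sf M}_{\mathcal{H}}^k$ of an auxiliary hypergraph, for which an oracle is already available. Concretely, I would first exploit the parallelism observed in the proof of Proposition \ref{r1}: since all elements of a bundle $\mathcal{A}_e$ are parallel copies of $e$ in ${\sf M}_{\mathcal{F}}^k$, a set $\mathcal{Z}$ is dependent whenever it contains two dyperedges from the same bundle $\mathcal{A}_e$. Hence I first scan $\mathcal{Z}$ and, if $|\mathcal{Z}\cap\mathcal{A}_e|\geq 2$ for some $e\in\mathcal{E}$, I immediately declare $\mathcal{Z}$ dependent. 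Otherwise $|\mathcal{Z}\cap\mathcal{A}_e|\leq 1$ for every $e$, and I pass to the surviving representative.

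Next I would translate $\mathcal{Z}$ back into a hyperedge set of the underlying hypergraph $\mathcal{H}_{\mathcal{F}}=(V,\mathcal{E}_{\mathcal{A}}\cup\mathcal{E})$. By the construction of ${\sf M}_{\mathcal{F}}^k$, each element of $\mathcal{A}$ is associated with the corresponding element of $\mathcal{E}_{\mathcal{A}}$, and each dyperedge of a bundle $\mathcal{A}_e$ is associated with the single hyperedge $e$; under this correspondence $\mathcal{Z}$ maps to a set $\mathcal{Z}_{\mathcal{H}}\subseteq\mathcal{E}_{\mathcal{A}}\cup\mathcal{E}$. Because we have already ensured at most one element of each bundle is present, this map is injective on $\mathcal{Z}$, so $|\mathcal{Z}_{\mathcal{H}}|=|\mathcal{Z}|$, and the restriction of ${\sf M}_{\mathcal{F}}^k$ to (a transversal of) $\mathcal{Z}$ is isomorphic to the restriction of ${\sf M}_{\mathcal{H}_{\mathcal{F}}}^k$ to $\mathcal{Z}_{\mathcal{H}}$. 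Consequently $\mathcal{Z}$ is independent in ${\sf M}_{\mathcal{F}}^k$ if and only if $\mathcal{Z}_{\mathcal{H}}$ is independent in ${\sf M}_{\mathcal{H}_{\mathcal{F}}}^k$.

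It then remains to decide the latter independence in polynomial time. For this I would invoke Lemma \ref{qdfvq}, which supplies a polynomial time independence oracle for the hyperforest matroid ${\sf M}_{\mathcal{H}_{\mathcal{F}}}$, together with Proposition \ref{tfuyg}: since ${\sf M}_{\mathcal{H}_{\mathcal{F}}}^k$ is by definition the $k$-sum of ${\sf M}_{\mathcal{H}_{\mathcal{F}}}$, Proposition \ref{tfuyg} upgrades the oracle of Lemma \ref{qdfvq} into a polynomial time independence oracle for ${\sf M}_{\mathcal{H}_{\mathcal{F}}}^k$. Applying this oracle to $\mathcal{Z}_{\mathcal{H}}$ decides the question, and since all reductions (bundle scan, relabelling to $\mathcal{Z}_{\mathcal{H}}$, building $\mathcal{H}_{\mathcal{F}}$) are clearly polynomial, the overall procedure runs in polynomial time.

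The main obstacle I expect is not any single deep step but rather pinning down the isomorphism precisely: one must verify that deleting duplicate bundle elements and relabelling to $\mathcal{Z}_{\mathcal{H}}$ genuinely realises a restriction of ${\sf M}_{\mathcal{F}}^k$ as a copy of ${\sf M}_{\mathcal{H}_{\mathcal{F}}}^k$, so that independence is preserved in both directions. This is exactly the content already extracted in the proof of Proposition \ref{r1}, where the existence of a matroid ${\sf M}'$ isomorphic to ${\sf M}_{\mathcal{H}}^k$ and restricting ${\sf M}_{\mathcal{F}}^k$ was established; I would reuse that observation so that the bookkeeping does not need to be redone from scratch, leaving only the straightforward combination with Lemma \ref{qdfvq} and Proposition \ref{tfuyg}.
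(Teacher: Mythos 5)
Your proposal is correct and follows essentially the same route as the paper's proof: first reject $\mathcal{Z}$ if it contains two elements of some bundle $\mathcal{A}_e$, then identify the remaining set with a subset of the ground set of a restriction of ${\sf M}_{\mathcal{F}}^k$ isomorphic to ${\sf M}_{\mathcal{H}_{\mathcal{F}}}^k$, and finally apply the oracle obtained from Lemma \ref{qdfvq} via Proposition \ref{tfuyg}.
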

\begin{proof}
If $\mathcal{Z}$ contains at least 2 elements of $\mathcal{A}_e$ for some $e \in \mathcal{E}$, then $\mathcal{Z}$ is dependent in ${\sf M}_{\mathcal{F}}^k$ by definition. Otherwise, there is a matroid ${\sf M}'$ that is isomorphic to ${\sf M}^k_{\mathcal{H}_{\mathcal{F}}}$, is a restriction of ${\sf M}^k_{\mathcal{F}}$ and whose ground set contains $\mathcal{Z}$. Further, ${\sf M}'$ can be found efficiently. It therefore suffices to prove that a polynomial time independence oracle for ${\sf M}^k_{\mathcal{H}_{\mathcal{F}}}$ is available. This follows immediately from Lemma \ref{qdfvq} and Proposition \ref{tfuyg}.
\end{proof}
\subsection{Generalized partition matroids}

The other matroid we consider is called a generalized partition matroid and plays the role of the $(k,R)$-partition matroid. It has been considered in a slightly weaker form in \cite{book}, see Problem 5.3.4.
\medskip

 Let $\{${\boldmath$S_1$}$,\dots,${\boldmath$S_n$}$\}$ be a partition of a  set {\boldmath$S$} and {\boldmath$\mu$}, {\boldmath$\alpha_i$}, {\boldmath$\beta_i$} $\in \mathbb Z$ for all $i\in\{1,\dots,n\}.$  For $Z\subseteq S$ and $i\in\{1,\dots,n\},$ let {\boldmath$z_i$} $=|Z\cap S_i|.$ Let 
\begin{itemize}[label={}]
\item {\boldmath$\mathcal{I}$} $=\{Z\subseteq S:z_i\leq \beta_i \text{ for all }i\in\{1,\ldots,n\},\sum_{i=1}^n \max\{\alpha_i, z_i\}\leq \mu\}$,
 \item{\boldmath$\mathcal{B}$} $=\{Z \subseteq S: \alpha_i\leq z_i\leq \beta_i \text{ for all }i\in\{1,\ldots,n\},|Z|=\mu\}$
    and 
 \item {\boldmath$r(Z)$} $=\min\{\sum_{i=1}^n \min \{\beta_i,z_i\},\mu-\sum_{i=1}^n \max\{\alpha_i-z_i,0\}\}\text{ for all } Z\subseteq S.$
\end{itemize}
\begin{Theorem}\label{genpart}
There exists  a matroid ${\sf M}$ whose  set of independent sets is $\mathcal{I}$, set of bases is $\mathcal{B}$ and  rank function is $r$ if and only if 

\begin{eqnarray}
	 &&\hskip .6truecm\max\{\alpha_i, 0\}\hskip .64truecm \leq \hskip .6truecm\min\{\beta_i, |S_i|\} \ \text{ for all } i\in\{1,\ldots,n\},\label{genpartcond1}\\
	&&\sum_{i=1}^n \max\{\alpha_i,0\} \leq \mu \leq \sum_{i=1}^n \min\{\beta_i,|S_i|\}. \label{genpartcond2}
\end{eqnarray}
\end{Theorem}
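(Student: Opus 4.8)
The plan is to treat the two directions separately, with necessity being immediate and sufficiency containing all of the work. For necessity, I would use that every matroid has a basis: if a matroid with base family $\mathcal{B}$ exists then $\mathcal{B}\neq\emptyset$, so picking $B\in\mathcal{B}$ and setting $b_i=|B\cap S_i|$, the bounds $\alpha_i\le b_i\le\beta_i$ together with the trivial $0\le b_i\le|S_i|$ give $\max\{\alpha_i,0\}\le b_i\le\min\{\beta_i,|S_i|\}$, which is \eqref{genpartcond1}, and summing over $i$ with $\sum_i b_i=|B|=\mu$ yields \eqref{genpartcond2}.

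For sufficiency, assume \eqref{genpartcond1} and \eqref{genpartcond2}. First I would reduce to the normalized case $0\le\alpha_i\le\beta_i\le|S_i|$ by replacing $\alpha_i,\beta_i$ with $\max\{\alpha_i,0\},\min\{\beta_i,|S_i|\}$; since $0\le z_i\le|S_i|$ for every $Z\subseteq S$, this leaves $\max\{\alpha_i,z_i\}$, $\min\{\beta_i,z_i\}$ and the relations $\alpha_i\le z_i\le\beta_i$ unchanged, hence leaves $\mathcal{I},\mathcal{B},r$ and both conditions unchanged. Next I would rewrite the second defining constraint using $\sum_i\max\{\alpha_i,z_i\}=|Z|+\sum_i\max\{\alpha_i-z_i,0\}=:\phi(Z)$, so that $Z\in\mathcal{I}$ means ``$z_i\le\beta_i$ for all $i$ and $\phi(Z)\le\mu$''. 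The easy independence axioms follow quickly: $\emptyset\in\mathcal{I}$ because $\phi(\emptyset)=\sum_i\alpha_i\le\mu$, and deleting one element of $S_i$ changes $|Z|$ by $-1$ while raising $\max\{\alpha_i-z_i,0\}$ by at most $1$, so $\phi$ never increases under deletion, giving downward closure.

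The heart of the proof, and the step I expect to be the main obstacle, is the exchange axiom. Given $Y,Z\in\mathcal{I}$ with $|Y|<|Z|$, some index $j$ has $y_j<z_j\le\beta_j$, so for $e\in(Z\setminus Y)\cap S_j$ adding $e$ respects the cap $\beta_j$; the only danger is the budget, and adding $e$ raises $\phi$ by $0$ if $y_j<\alpha_j$ and by exactly $1$ if $y_j\ge\alpha_j$. Thus either some augmenting index has $y_j<\alpha_j$, in which case we augment directly, or else every augmenting index satisfies $y_j\ge\alpha_j$ and it suffices to prove $\phi(Y)<\mu$. I would establish the latter by contradiction: if $\phi(Y)=\mu$, then comparing termwise, indices with $y_i\ge z_i$ satisfy $\max\{\alpha_i-z_i,0\}\ge\max\{\alpha_i-y_i,0\}$ and thus contribute nonnegatively, while indices with $y_i<z_i$ have $z_i>y_i\ge\alpha_i$ so both maxima vanish; hence $\phi(Z)\ge\phi(Y)+(|Z|-|Y|)>\mu$, contradicting $Z\in\mathcal{I}$.

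Finally, with $\mathcal{I}$ established as the independent sets of a matroid ${\sf M}$, I would identify its rank and bases. Writing $f(Z)=\sum_i\min\{\beta_i,z_i\}$ and $h(Z)=\mu-\sum_i\max\{\alpha_i-z_i,0\}$, any independent $Y\subseteq Z$ satisfies $y_i\le\min\{\beta_i,z_i\}$ and $\phi(Y)\le\mu$, giving $|Y|\le f(Z)$ and $|Y|\le h(Z)$, so $|Y|\le r(Z)$. Conversely, since $r(Z)$ lies between $\sum_i\min\{\alpha_i,z_i\}$ and $\sum_i\min\{\beta_i,z_i\}$, one can choose $y_i\in[\min\{\alpha_i,z_i\},\min\{\beta_i,z_i\}]$ with $\sum_i y_i=r(Z)$, and such a choice forces $\max\{\alpha_i-y_i,0\}=\max\{\alpha_i-z_i,0\}$, whence $\phi(Y)\le\mu$ and $Y$ is independent of size $r(Z)$; this proves $r_{{\sf M}}=r$. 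Taking $Z=S$ gives $r_{{\sf M}}(S)=\mu$, so the bases are exactly the independent sets of size $\mu$, and for $|B|=\mu$ the identity $\phi(B)=\mu+\sum_i\max\{\alpha_i-b_i,0\}$ shows $\phi(B)\le\mu$ holds if and only if $b_i\ge\alpha_i$ for all $i$, i.e.\ if and only if $B\in\mathcal{B}$.
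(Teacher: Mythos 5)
Your proposal is correct and takes essentially the same route as the paper: necessity by summing the constraints over a basis, then direct verification of the independence axioms --- your exchange step, splitting on whether some augmenting index $j$ has $y_j<\alpha_j$ and otherwise ruling out $\phi(Y)=\mu$ via the termwise comparison of $\max\{\alpha_i-y_i,0\}$ and $\max\{\alpha_i-z_i,0\}$, is the paper's argument in different notation --- followed by identification of the rank function and bases. The only divergence is cosmetic and comes at the end: the paper bounds a maximal independent subset of $Z$ from below by an augmentation argument and characterizes $\mathcal{B}$ as the maximal elements of $\mathcal{I}$, whereas you construct an independent subset of size exactly $r(Z)$ by choosing integers $y_i\in[\min\{\alpha_i,z_i\},\min\{\beta_i,z_i\}]$ summing to $r(Z)$ and then read the bases off $r_{{\sf M}}(S)=\mu$; both finishes rest on the same inequalities, and your preliminary normalization to $0\le\alpha_i\le\beta_i\le|S_i|$ is a harmless convenience.
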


The matroid ${\sf M}$ in Theorem \ref{genpart} is called {\it generalized partition matroid}.

\begin{proof}
First suppose that ${\sf M}$ is a matroid and let $Z \in \mathcal{B}$. For all $i \in \{1,\ldots,n\}$, this yields $\max\{\alpha_i,0\}\leq z_i\leq \min\{\beta_i,|S_i|\}$. Further, we obtain $\sum_{i=1}^n\max\{\alpha_i,0\}\leq \sum_{i=1}^n z_i=\mu$ and $\sum_{i=1}^n\max\{\beta_i,|S_i|\}\geq \sum_{i=1}^n z_i=\mu$.

We now show sufficiency through three claims.

\begin{Claim}
$\mathcal{I}$ forms the collection of independent sets of a matroid ${\sf M}$, i.e. $\mathcal{I}$ satisfies the following 3 independence axioms:
\begin{itemize}[label={}]
\item[($I_0$)]  $\emptyset \in \mathcal{I}$,
\item[($I_1$)] if $Z \subset Z'$ and $Z' \in \mathcal{I}$, then $Z \in \mathcal{I}$,
\item[($I_2$)] if $Z, Z' \in \mathcal{I}$ and $|Z|<|Z'|$, then there exists some $x \in Z'-Z$ such that $Z \cup \{x\} \in \mathcal{I}$.
\end{itemize}
\end{Claim}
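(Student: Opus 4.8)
The plan is to verify the three matroid independence axioms $(I_0)$, $(I_1)$, $(I_2)$ for the family $\mathcal{I}$, and later to reconcile $\mathcal{I}$, $\mathcal{B}$ and $r$ with the matroid structure. Axiom $(I_0)$ is immediate: for $Z=\emptyset$ we have $z_i=0$ for all $i$, so $z_i\leq\beta_i$ follows from $\min\{\beta_i,|S_i|\}\geq\max\{\alpha_i,0\}\geq 0$ by (\ref{genpartcond1}), and $\sum_{i=1}^n\max\{\alpha_i,0\}\leq\mu$ by (\ref{genpartcond2}) gives the second condition. For $(I_1)$, passing from $Z'$ to a subset $Z$ can only decrease each $z_i$, so the upper bounds $z_i\leq\beta_i$ are preserved, and since $\max\{\alpha_i,z_i\}$ is monotone in $z_i$, the sum $\sum_i\max\{\alpha_i,z_i\}$ does not increase, so the budget constraint $\leq\mu$ is maintained.

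The substance lies in the exchange axiom $(I_2)$, which I expect to be the main obstacle. Given $Z,Z'\in\mathcal{I}$ with $|Z|<|Z'|$, I would look for a class $S_j$ in which $Z'$ beats $Z$, i.e. $z_j'>z_j$, which exists by a counting/pigeonhole argument since $|Z'|>|Z|$. The candidate element is some $x\in(Z'\cap S_j)-Z$; such an $x$ exists because $|Z'\cap S_j|>|Z\cap S_j|$ guarantees $Z'\cap S_j\not\subseteq Z$. Adding $x$ increments $z_j$ by one to some $z_j+1\leq z_j'\leq\beta_j$, so the upper-bound constraint of $\mathcal{I}$ is still satisfied in class $j$ and untouched elsewhere. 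The delicate point is the budget constraint $\sum_i\max\{\alpha_i,z_i\}\leq\mu$: adding $x$ increases the $j$-th term by at most one, and precisely by one only when $z_j\geq\alpha_j$, i.e. when the term was already $z_j$ rather than being pinned at $\alpha_j$.

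Hence the real work is to choose the class $j$ so that adding one element to $Z$ in that class does not push the budget sum above $\mu$. I would argue as follows. If there is a class $S_j$ with $z_j'>z_j$ and $z_j<\alpha_j$, then $\max\{\alpha_j,z_j\}=\alpha_j=\max\{\alpha_j,z_j+1\}$, so the budget sum is unchanged and we are done. Otherwise every class witnessing $z_j'>z_j$ has $z_j\geq\alpha_j$, in which case adding $x$ raises the sum by exactly one; to show this stays $\leq\mu$ I would compare the two budget sums directly. On the classes where $z_i'>z_i$ we have $\max\{\alpha_i,z_i\}=z_i<z_i'\leq\max\{\alpha_i,z_i'\}$ (using $z_i\geq\alpha_i$), so $Z$'s budget sum is strictly less than $Z'$'s on this portion; combined with $\sum_i\max\{\alpha_i,z_i'\}\leq\mu$ for $Z'$ and the fact that on all other classes $z_i\geq z_i'$ contributes no worse, a strict-inequality bookkeeping shows $\sum_i\max\{\alpha_i,z_i\}<\mu$, leaving room for the increment. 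The only case requiring separate handling is when no class has $z_i<\alpha_i$ among the witnesses yet the sums are equal; the strictness above is designed to rule this out.

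Once $\mathcal{I}$ is confirmed to be a matroid, the remaining claims (identifying $\mathcal{B}$ as the bases and verifying the rank formula $r$) can be approached by standard arguments: a base is a maximal independent set, so one shows $Z\in\mathcal{B}$ exactly when $Z\in\mathcal{I}$, $|Z|=\mu$ and the lower bounds $\alpha_i\leq z_i$ hold, using (\ref{genpartcond2}) to guarantee existence; and the rank formula would follow by exhibiting, for each $Z$, a maximal independent subset achieving the stated minimum, the two terms in the $\min$ corresponding respectively to the upper-bound constraints $\beta_i$ and to the global budget $\mu$ reduced by the forced deficits $\max\{\alpha_i-z_i,0\}$.
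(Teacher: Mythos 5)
Your handling of $(I_0)$, $(I_1)$, and the setup of $(I_2)$ (choosing a class $j$ with $z_j<z_j'$, noting the upper bounds survive, and splitting on whether some such class has $z_j<\alpha_j$) matches the paper. The gap is in the final budget comparison in the remaining case, where $z_j\geq\alpha_j$ for every witness class. You argue that $Z$'s budget sum is strictly below $Z'$'s on the witness classes and ``contributes no worse'' on the other classes, concluding $\sum_i\max\{\alpha_i,z_i\}<\mu$. But on a class with $z_i\geq z_i'$ we have $\max\{\alpha_i,z_i\}\geq\max\{\alpha_i,z_i'\}$, so $Z$'s contribution there can strictly \emph{exceed} $Z'$'s by an arbitrarily large amount; ``no worse'' is simply false. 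For instance with $\alpha_1=\alpha_2=0$, $z=(0,50)$, $z'=(60,0)$, the non-witness class $2$ contributes $50$ for $Z$ versus $0$ for $Z'$. A single unit of strictness gained on the witness classes therefore proves nothing by itself: the conclusion is only saved because the total surplus $\sum_i(z_i'-z_i)=|Z'|-|Z|\geq 1$, and your bookkeeping never uses $|Z|<|Z'|$ quantitatively in the comparison (only to get $J\neq\emptyset$).

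The repair is the identity the paper uses: write $\max\{\alpha_i,z_i\}=z_i+\max\{\alpha_i-z_i,0\}$. In the case at hand one checks $\max\{\alpha_i-z_i,0\}\leq\max\{\alpha_i-z_i',0\}$ for every $i$ (on witness classes because $z_i\geq\alpha_i$ makes the left side $0$; on the others because $z_i\geq z_i'$), and then $\sum_i\max\{\alpha_i,z_i\}+1=\sum_i\max\{\alpha_i-z_i,0\}+|Z|+1\leq\sum_i\max\{\alpha_i-z_i',0\}+|Z'|=\sum_i\max\{\alpha_i,z_i'\}\leq\mu$. Equivalently, one can note that $t\mapsto\max\{\alpha_i,t\}$ is nondecreasing and $1$-Lipschitz, so $\max\{\alpha_i,z_i'\}-\max\{\alpha_i,z_i\}\geq z_i'-z_i$ on every class, and summing gives at least $|Z'|-|Z|\geq 1$. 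Either way the cardinality gap, not the per-class strictness, is what closes the argument.
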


\begin{proof}
$(I_0)$: By \eqref{genpartcond1}, we have $\beta_i\geq 0=|\emptyset \cap S_i|$ for $i \in \{1,\ldots,n\}$. Further, \eqref{genpartcond2} yields $\sum_{i=1}^n \max\{\alpha_i,|\emptyset \cap S_i|\}= \sum_{i=1}^n \max\{\alpha_i,0\}\leq \mu$. This yields $\emptyset \in \mathcal{I}$.

$(I_1)$: Let $Z\subset Z' \in \mathcal{I}$. Then   $z_i\le z_i'\le\beta_i$ for $i=1,\ldots,n$ and $\sum_{i=1}^n \max\{\alpha_i, z_i\}\leq\sum_{i=1}^n \max\{\alpha_i, z'_i\}\leq \mu,$ so we have $Z\in \mathcal{I}$.

$(I_2)$: Let $Z,Z' \in \mathcal{I}$ and $|Z|<|Z'|.$ Let {\boldmath$J$}$=\{j\in \{1,\ldots,n\}:z_j<z_j'\}$. Observe that $J \neq \emptyset$ as $|Z|<|Z'|$. For all $j \in J$, let $x_j \in (S_j \cap Z')-Z$ and $Z^j=Z \cup \{x_j\}$. Observe that for all $j \in J$, we have $z_j^j\leq z'_j\leq\beta_j$ and $z^j_i\leq z_i\leq \beta_i$ for all $i \in \{1,\ldots,n\}-\{j\}$. In order to prove that $Z^j \in \mathcal{I}$ for some $j \in J$, it remains to show that there is some $j \in J$ with $\sum_{i=1}^n \max\{\alpha_i, z^j_i\}\leq \mu$. If there is some $j \in J$ with $z_j<\alpha_j$, then $\sum_{i=1}^n \max\{\alpha_i, z^j_i\}=\sum_{i=1}^n \max\{\alpha_i, z_i\}\leq\mu$, so we are done. We may hence suppose that $z_j\geq \alpha_j$ for all $j \in J$. This yields $\max\{\alpha_j-z'_j,0\}\geq 0=\max\{\alpha_j-z_j,0\}$ for all $j \in J$. For all $i \in \{1,\ldots,n\}-J$, we have $z_i\geq z_i'$ yielding $\max\{\alpha_i-z'_i,0\}\geq\max\{\alpha_i-z_i,0\}$. This yields $\max\{\alpha_i-z'_i,0\}\geq\max\{\alpha_i-z_i,0\}$ for all $i \in \{1,\ldots,n\}$. As $|Z|<|Z'|$ and $Z'\in \mathcal{I}$,  for some arbitrary $j \in J$ we obtain

\begin{align*}
\sum_{i=1}^n\max\{\alpha_i,z^j_i\}&\leq \sum_{i=1}^n\max\{\alpha_i,z_i\}+1\\
&= \sum_{i=1}^n\max\{\alpha_i-z_i,0\}+\sum_{i=1}^nz_i+1\\
&\leq \sum_{i=1}^n\max\{\alpha_i-z'_i,0\}+\sum_{i=1}^nz'_i\\
&=\sum_{i=1}^n\max\{\alpha_i,z'_i\}\\
&\leq \mu.
\end{align*}

\end{proof}

\begin{Claim}\label{ind}
$Z \in \mathcal{B}$ if and only if $Z$ is a maximal element in $\mathcal{I}$.
\end{Claim}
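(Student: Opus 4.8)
The plan is to prove the two implications separately. Throughout I may use that $\mathcal{I}$ is the independent-set family of a matroid (just established), but the argument I have in mind is essentially direct and does not rely on equicardinality of bases. The forward implication is the easy one. Given $Z\in\mathcal{B}$, I would first check $Z\in\mathcal{I}$: since $\alpha_i\le z_i$ for every $i$, we have $\sum_i\max\{\alpha_i,z_i\}=\sum_i z_i=|Z|=\mu$, while $z_i\le\beta_i$ holds by definition of $\mathcal{B}$. To see maximality, I would add an arbitrary $x\in S-Z$; the new counts $z_i'$ still satisfy $z_i'\ge\alpha_i$, so $\sum_i\max\{\alpha_i,z_i'\}=|Z\cup\{x\}|=\mu+1>\mu$, showing $Z\cup\{x\}\notin\mathcal{I}$. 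Hence $Z$ is maximal.

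For the converse, let $Z$ be maximal in $\mathcal{I}$. The upper bounds $z_i\le\beta_i$ come for free from $Z\in\mathcal{I}$, so the two things left to establish are $|Z|=\mu$ and the lower bounds $\alpha_i\le z_i$. I would prove $|Z|=\mu$ first, by contradiction: assuming $\sum_i z_i<\mu$, I must exhibit an element whose addition keeps the set in $\mathcal{I}$. This is the step I expect to be the main obstacle, because a naive choice can increase $\sum_i\max\{\alpha_i,z_i\}$ by one and thereby violate the budget $\mu$. The fix is a case distinction on whether the budget is already tight. If $\sum_i\max\{\alpha_i,z_i\}<\mu$, then since $\sum_i z_i<\mu\le\sum_i\min\{\beta_i,|S_i|\}$ by \eqref{genpartcond2} some coordinate $i$ has room ($z_i<\min\{\beta_i,|S_i|\}$), and adding an element of $S_i$ increases the sum by at most one, staying within $\mu$. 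If instead $\sum_i\max\{\alpha_i,z_i\}=\mu>\sum_i z_i$, then some coordinate has $z_i<\alpha_i$; by \eqref{genpartcond1} this forces $z_i<\alpha_i\le\min\{\beta_i,|S_i|\}$, so $S_i$ has an element outside $Z$, and adding it leaves $\max\{\alpha_i,z_i\}$ unchanged, so the set stays in $\mathcal{I}$. Either way maximality is contradicted, so $|Z|=\mu$.

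Finally, with $|Z|=\mu$ in hand, I would recover the lower bounds by a squeezing argument: $\mu=\sum_i z_i\le\sum_i\max\{\alpha_i,z_i\}\le\mu$, so equality holds termwise, forcing $\max\{\alpha_i,z_i\}=z_i$, i.e. $\alpha_i\le z_i$, for every $i$. Together with $z_i\le\beta_i$ and $|Z|=\mu$ this gives $Z\in\mathcal{B}$, completing the proof. The only genuinely delicate point is the element-addition argument above; everything else is bookkeeping with the $\max$ terms and the two feasibility conditions \eqref{genpartcond1}--\eqref{genpartcond2}.
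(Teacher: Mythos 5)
Your proof is correct and follows essentially the same route as the paper: both directions rest on the same two augmentation arguments (adding an element of $S_i$ when $z_i<\alpha_i$ leaves $\sum_i\max\{\alpha_i,z_i\}$ unchanged, and adding one when the budget is slack increases it by at most one), justified by conditions \eqref{genpartcond1} and \eqref{genpartcond2} exactly as in the paper. The only difference is cosmetic: the paper first establishes $z_i\ge\alpha_i$ for all $i$ and then deduces $|Z|=\mu$ without any case split, whereas you prove $|Z|=\mu$ first (which forces your two-case analysis) and then recover the lower bounds by termwise squeezing.
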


\begin{proof}
First let $Z \in \mathcal{B}$. We obtain $\mu=|Z|=\sum_{i =1}^nz_i=\sum_{i =1}^n\max\{\alpha_i,z_i\}$. As $\beta_i \geq z_i$ for all $i \in \{1,\ldots,n\}$, we have $Z \in \mathcal{I}$. Further, for any proper superset $Z'$ of $Z$, we have $\sum_{i =1}^n\max\{\alpha_i,z'_i\}\geq |Z'|>|Z|=\mu$, so $Z' \notin \mathcal{I}$. It follows that $Z$ is maximally in $\mathcal{I}$.

Now let $Z$ be a maximal element in $\mathcal{I}$. If $z_j<\alpha_j$ for some $j \in \{1,\ldots,n\}$, let $x \in S_j-Z$ and let $Z'=Z \cup \{x\}$. As $\alpha_j\leq \beta_j$ and $Z \in \mathcal{I}$, we obtain $z_j'\leq z_j+1\leq \alpha_j\leq \beta_j$ and $z_i'=z_i\leq \beta_i$ for all  $i \in \{1,\ldots,n\}-\{j\}$. Further, $\sum_{i=1}^n\max\{\alpha_i,z'_i\}=\sum_{i=1}^n\max\{\alpha_i,z_i\}\leq \mu$, so $Z' \in \mathcal{I}$. This contradicts the maximality of $Z$.  We obtain that $z_j\geq \alpha_j$ for all $j \in \{1,\ldots,n\}$.

If $|Z|<\mu$, by $\sum_{i=1}^n\min\{\beta_i,|S_i|\}\geq \mu$, there exists some $j \in \{1,\ldots,n\}$ such that $z_j<\min\{\beta_j,|S_j|\}$. Let $x \in S_j-Z$ and let $Z'=Z \cup \{x\}$. We have $z_j'\leq z_j+1\leq \beta_j$ and $z_i'=z_i\leq \beta_i$ for all  $i \in \{1,\ldots,n\}-\{j\}$. Further, $\sum_{i=1}^n\max\{\alpha_i,z'_i\}=\sum_{i=1}^n\max\{\alpha_i,z_i\}+1=|Z|+1\leq \mu$, so $Z' \in \mathcal{I}$. This contradicts the maximality of $Z$. It follows that $Z \in \mathcal{B}$.

\end{proof}

\begin{Claim} 
The rank function of ${\sf M}$ is $r.$
\end{Claim}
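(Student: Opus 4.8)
The plan is to prove that the rank function of ${\sf M}$ equals $r$ by establishing that, for an arbitrary $Z \subseteq S$, the quantity $r(Z)$ simultaneously upper-bounds the size of any independent subset of $Z$ and is achieved by some independent subset of $Z$. Recall that $r(Z)=\min\{\sum_{i=1}^n \min\{\beta_i,z_i\},\ \mu-\sum_{i=1}^n \max\{\alpha_i-z_i,0\}\}$. First I would observe that the matroid rank $r_{{\sf M}}(Z)$ is by definition the maximum of $|Y|$ over independent subsets $Y \subseteq Z$.

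For the upper bound $r_{{\sf M}}(Z)\le r(Z)$, let $Y\subseteq Z$ be independent and write $y_i=|Y\cap S_i|$, so $y_i\le z_i$. On one hand the constraint $y_i\le \beta_i$ gives $|Y|=\sum_i y_i\le \sum_i\min\{\beta_i,z_i\}$. On the other hand, from $\sum_i\max\{\alpha_i,y_i\}\le\mu$ I would derive $|Y|=\sum_i y_i=\sum_i\max\{\alpha_i,y_i\}-\sum_i\max\{\alpha_i-y_i,0\}\le \mu-\sum_i\max\{\alpha_i-y_i,0\}\le\mu-\sum_i\max\{\alpha_i-z_i,0\}$, where the last inequality uses $y_i\le z_i$ so that $\max\{\alpha_i-y_i,0\}\ge\max\{\alpha_i-z_i,0\}$. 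Taking the minimum of the two bounds yields $|Y|\le r(Z)$.

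For the reverse inequality, I would construct an explicit independent $Y\subseteq Z$ with $|Y|=r(Z)$. The natural construction is to choose $y_i$ with $\max\{\alpha_i,0\}\wedge z_i \le y_i\le\min\{\beta_i,z_i\}$ so as to saturate whichever term of the minimum is active, i.e. to pick the $y_i$ greedily up to $\min\{\beta_i,z_i\}$ while keeping $\sum_i\max\{\alpha_i,y_i\}\le\mu$, then let $Y$ contain exactly $y_i$ elements of $Z\cap S_i$. The key calculations are to verify that the resulting $Y$ satisfies both defining inequalities of $\mathcal{I}$ and that $|Y|=r(Z)$; conditions \eqref{genpartcond1} and \eqref{genpartcond2} guarantee the feasibility of this choice. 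I expect the main obstacle to be this construction step: one must handle the two cases according to which term attains the minimum in $r(Z)$, and in each case check carefully that the number of chosen elements does not force a violation of $\sum_i\max\{\alpha_i,y_i\}\le\mu$ while still reaching the target cardinality, using the already-proved exchange structure and the bounds $\sum_i\max\{\alpha_i,0\}\le\mu\le\sum_i\min\{\beta_i,|S_i|\}$.

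Finally, since both inequalities hold for every $Z\subseteq S$, I conclude $r_{{\sf M}}(Z)=r(Z)$, completing the proof.
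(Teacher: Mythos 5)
Your upper bound $r_{{\sf M}}(Z)\le r(Z)$ is complete and is essentially the paper's argument: $y_i\le\min\{\beta_i,z_i\}$ gives the first term of the minimum, and the identity $\sum_{i}\max\{\alpha_i,y_i\}=\sum_i y_i+\sum_i\max\{\alpha_i-y_i,0\}$ together with $\max\{\alpha_i-y_i,0\}\ge\max\{\alpha_i-z_i,0\}$ (from $y_i\le z_i$) gives the second.

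The lower bound, however, is only a plan, and the step you defer (``I expect the main obstacle to be this construction step'') is precisely the substance of the claim. You never specify the values $y_i$, never verify that the resulting $Y$ satisfies $\sum_i\max\{\alpha_i,y_i\}\le\mu$, and never compute $|Y|$; the two-case analysis according to which term of $r(Z)$ is active \emph{is} the proof. The greedy route would work if executed: first set $y_i=\max\{0,\min\{\alpha_i,\beta_i,z_i\}\}$, which keeps $\sum_i\max\{\alpha_i,y_i\}=\sum_i\max\{\alpha_i,0\}\le\mu$ by \eqref{genpartcond2}, then increase coordinates one unit at a time up to $\min\{\beta_i,z_i\}$, each unit now costing exactly one unit of the remaining budget $\mu-\sum_i\max\{\alpha_i,0\}$; the identity $\max\{\alpha_i-\min\{\beta_i,z_i\},0\}=\max\{\alpha_i-z_i,0\}$ (valid since $\alpha_i\le\beta_i$ by \eqref{genpartcond1}) is then what makes the two stopping cases evaluate to the two terms of $r(Z)$. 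None of this appears in your text. The paper avoids the construction altogether: since $\mathcal{I}$ is already known to form a matroid, it takes an arbitrary maximal independent $Y\subseteq Z$, so that $|Y|=r_{{\sf M}}(Z)$, and shows that if $y_j<\min\{z_j,\beta_j\}$ for some $j$, then maximality forces $\sum_i\max\{\alpha_i,y_i\}=\mu$ and $y_j\ge\alpha_j$ for every such $j$, whence $\max\{\alpha_i-y_i,0\}=\max\{\alpha_i-z_i,0\}$ for all $i$ and $|Y|=\mu-\sum_i\max\{\alpha_i-z_i,0\}\ge r(Z)$, while otherwise $|Y|=\sum_i\min\{\beta_i,z_i\}\ge r(Z)$. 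Either finish your constructive two-case computation or switch to this maximality argument; as written the lower bound is not proved.
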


\begin{proof}
 Let {\boldmath$Z$} $\subseteq S$ and {\boldmath$Y$} be a maximal element of $\mathcal{I}$ in $Z.$ 

As $Y \in \mathcal{I}$ and $Y \subseteq Z$, we obtain $y_i\leq \min\{\beta_i,z_i\}$ for all $i \in\{1,\ldots,n\}$. This yields $r_{{\sf M}}(Z)=|Y|\leq \sum_{i=1}^n\min \{\beta_i,z_i\}$.
Further, as $y_i \leq z_i$ for all $i \in\{1,\ldots,n\}$ and $Y \in \mathcal{I}$, we obtain $\sum_{i=1}^ny_i+\sum_{i=1}^n\max\{\alpha_i-z_i,0\}=\sum_{i=1}^n\max\{\alpha_i-z_i+y_i,y_i\}\leq \sum_{i=1}^n\max\{\alpha_i,y_i\}\leq \mu$, so $r_{{\sf M}}(Z)=|Y|\leq \mu-\sum_{i=1}^n\max\{\alpha_i-z_i,0\}$. It follows that $r_{{\sf M}}(Z)\leq \min\{ \sum_{i=1}^n\min \{\beta_i,z_i\},  \mu-\sum_{i=1}^n\max\{\alpha_i-z_i,0\}\}=r(Z)$.
\medskip

Let {\boldmath $J$}$=\{j \in \{1,\ldots,n\}:y_j<\min\{z_j,\beta_j\}\}$. If $J =\emptyset$, we obtain $r_{\sf M}(Z)=|Y|\geq \sum_{i=1}^n\min\{z_i,\beta_i\}\geq \min\{ \sum_{i=1}^n\min \{\beta_i,z_i\},  \mu-\sum_{i=1}^n\max\{\alpha_i-z_i,0\}\}$, so we are done. We may hence suppose that $J \neq \emptyset$.
\medskip

For all $j \in J$, let $x_j \in (S_j\cap Z)-Y$ and let $Y^j=Y \cup \{x_j\}$. Observe that for all $j \in J$, we have $y^j_j=y_j+1\leq \min\{z_j,\beta_j\}$ and $y^j_i=y_i\leq \min\{z_i,\beta_i\}$ for all $i \in \{1,\ldots,n\}-\{j\}$. If $\sum_{i=1}^n\max\{\alpha_i,y_i\}<\mu$, then for some arbitrary $j \in J$, we have $\sum_{i=1}^n\max\{\alpha_i,y^j_i\}\leq \sum_{i=1}^n\max\{\alpha_i,y_i\}+1\leq\mu$, so $Y^j \in\mathcal{I}$, a contradiction to the maximality of $Y$. This yields $\sum_{i=1}^n\max\{\alpha_i,y_i\}=\mu$.
\medskip

If there is some $j \in J$ such that $y_j<\alpha_j$, then $\sum_{i=1}^n\max\{\alpha_i,y^j_i\}=\sum_{i=1}^n\max\{\alpha_i,y_i\}=\mu$, so $Y^j \in\mathcal{I}$, a contradiction to the maximality of $Y$. This yields $y_j\geq \alpha_j$ for all $j \in J$, so, as $y_j \leq z_j$, we obtain $\max\{\alpha_j-y_j,0\}=0=\max\{\alpha_j-z_j,0\}$. For all $i \in\{1,\ldots,n\}-J$, we have either $y_i=\beta_i$ or $y_i=z_i$. If $y_i=\beta_i$, by $y_i\leq z_i$ and $\alpha_i\leq \beta_i$, we obtain $\max\{\alpha_i-y_i,0\}=0=\max\{\alpha_i-z_i,0\}$. If $y_i=z_i$, we clearly obtain $\max\{\alpha_i-y_i,0\}=\max\{\alpha_i-z_i,0\}$. It follows that $\max\{\alpha_i-y_i,0\}=\max\{\alpha_i-z_i,0\}$ holds for all $i \in \{1,\ldots,n\}$.
\medskip

This yields $r_{{\sf M}}(Z)=\sum_{i=1}^ny_i=\sum_{i=1}^n\max \{\alpha_i,y_i\}-\sum_{i=1}^n\max \{\alpha_i-y_i,0\}=\mu -\sum_{i=1}^n\max \{\alpha_i-z_i,0\}\geq \min\{ \sum_{i=1}^n\min \{\beta_i,z_i\},  \mu-\sum_{i=1}^n\max\{\alpha_i-z_i,0\}\}=r(Z)$.

\end{proof}
The three previous claims yield the theorem.
\end{proof}

\begin{Corollary}\label{gpmd}
Let $\mathcal{D}=(V,\mathcal{A}')$ be a directed hypergraph and $f,g:V \rightarrow \mathbb{Z}_{+}$ integer functions such that the following two conditions are satisfied:
\begin{eqnarray}
	 \max\{k-g(v), 0\}			&	\leq 	&	\min\{k-f(v), d_{\mathcal{A}'}^-(v)\} \ \text{ for all } v \in V,\label{gpc1}\\
	\sum_{v\in V} \max\{k-g(v),0\} 	&	\leq	&	 k(|V|-1) \leq \sum_{v\in V}  \min\{k-f(v),d_{\mathcal{A}'}^-(v)\}. \label{gpc2}
\end{eqnarray}
Then  $\{\mathcal{Z}\subseteq \mathcal{A}':|\mathcal{Z}|=k(|V|-1), f(v)\leq k-d_{\mathcal{Z}}^-(v)\leq g(v)$ for all $v\in V\}$ is the set of bases of a matroid {\boldmath${\sf M}_{\mathcal{D}}^{(k,f,g)}$} on $\mathcal{A}'$ with rank function
\begin{equation*}
r_{{\sf M}_\mathcal{F}^{(k,f,g)}}(\mathcal{Z})=\min\{\sum_{v \in V}\min\{k-f(v),d_{\mathcal{Z}}^-(v)\},k(|V|-1)-\sum_{v \in V}\max\{k-g(v)-d_{\mathcal{Z}}^-(v),0\}\}.
\end{equation*}
\end{Corollary}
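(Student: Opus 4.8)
The plan is to derive this corollary as a direct instantiation of Theorem \ref{genpart}. First I would set up the dictionary between the two statements: take the ground set $S=\mathcal{A}'$ and partition it into the classes $S_v=\delta_{\mathcal{A}'}^-(v)$ for $v\in V$. This is a genuine partition because every dyperedge has exactly one head, so it lies in exactly one class $\delta_{\mathcal{A}'}^-(v)$ with $v=head(a)$; some classes may be empty, but this causes no harm in the construction of Theorem \ref{genpart}. With this choice one has $z_v=|Z\cap S_v|=d_{\mathcal{Z}}^-(v)$ and $|S_v|=d_{\mathcal{A}'}^-(v)$. I would then set $\mu=k(|V|-1)$, $\alpha_v=k-g(v)$ and $\beta_v=k-f(v)$, so that $n=|V|$ and the index set is $V$.

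The next step is to check that the hypotheses of Theorem \ref{genpart} translate exactly into \eqref{gpc1} and \eqref{gpc2}. Substituting $\alpha_v$, $\beta_v$ and $|S_v|$, condition \eqref{genpartcond1} becomes $\max\{k-g(v),0\}\leq \min\{k-f(v),d_{\mathcal{A}'}^-(v)\}$, which is exactly \eqref{gpc1}, and condition \eqref{genpartcond2} becomes $\sum_{v\in V}\max\{k-g(v),0\}\leq k(|V|-1)\leq \sum_{v\in V}\min\{k-f(v),d_{\mathcal{A}'}^-(v)\}$, which is exactly \eqref{gpc2}. Hence Theorem \ref{genpart} applies and yields a matroid ${\sf M}$ on $\mathcal{A}'$ with independent sets $\mathcal{I}$, bases $\mathcal{B}$ and rank function $r$ as described there.

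It remains to translate the description of the bases and of the rank function back into the language of the corollary. The key elementary observation is that the pair of inequalities $\alpha_v\leq z_v\leq \beta_v$ reads, after the substitution, as $k-g(v)\leq d_{\mathcal{Z}}^-(v)\leq k-f(v)$, which is equivalent to $f(v)\leq k-d_{\mathcal{Z}}^-(v)\leq g(v)$; combined with $|Z|=\mu=k(|V|-1)$, this shows that the basis family $\mathcal{B}$ of Theorem \ref{genpart} coincides with the set declared in the corollary, so we may name this matroid ${\sf M}_{\mathcal{D}}^{(k,f,g)}$. Feeding the same quantities into the rank formula $r(Z)=\min\{\sum_i\min\{\beta_i,z_i\},\mu-\sum_i\max\{\alpha_i-z_i,0\}\}$ of Theorem \ref{genpart} produces precisely the stated expression for $r_{{\sf M}_{\mathcal{D}}^{(k,f,g)}}(\mathcal{Z})$.

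No serious obstacle is expected: the proof is essentially the verification that the substitution $(\mu,\alpha_v,\beta_v)=(k(|V|-1),\,k-g(v),\,k-f(v))$ carries Theorem \ref{genpart} onto the desired statement. The only points requiring a moment of care are the sign-flipping rewrite of the root constraint and the treatment of vertices $v$ with $\delta_{\mathcal{A}'}^-(v)=\emptyset$: for such $v$ one has $z_v=0$ identically, condition \eqref{gpc1} forces $f(v)\leq k\leq g(v)$, so the corresponding constraint is vacuously satisfied and these vertices contribute $0$ to both terms of the rank function; thus retaining their empty classes in the partition keeps the sums indexed over all of $V$ without affecting anything.
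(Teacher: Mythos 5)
Your proposal is correct and matches the paper's own proof, which applies Theorem \ref{genpart} with exactly the same substitution $S=\mathcal{A}'$, $S_v=\delta_{\mathcal{A}'}^-(v)$, $\mu=k(|V|-1)$, $\alpha_v=k-g(v)$, $\beta_v=k-f(v)$. Your additional remarks on empty classes and the sign flip are careful elaborations of details the paper leaves implicit, not a different approach.
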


\begin{proof}
Let {\boldmath$S$} $={\mathcal{A}'},$ {\boldmath$\mu$} $=k(|V|-1)$ and {\boldmath$S_v$} $=\delta_{\mathcal{A}'}^-(v),$ {\boldmath$\alpha_v$} $=k-g(v)$ and {\boldmath$\beta_v$} $=k-f(v)$ for all $v \in V$. Then \eqref{gpc1} and \eqref{gpc2} coincide with \eqref{genpartcond1} and \eqref{genpartcond2}, respectively. We can therefore apply Theorem \ref{genpart} from which the statement follows immediately.
\end{proof}

The following is an immediate corollary of the definition of ${\sf M}_{\mathcal{D}}^{(k,f,g)}$.
\begin{Lemma}\label{oracle}
Given a directed hypergraph $\mathcal{D}=(V,\mathcal{A}')$, $f,g:V \rightarrow \mathbb{Z}_{+}$ integer functions satisfying \eqref{gpc1} and \eqref{gpc2} and $\mathcal{Z}\subseteq \mathcal{A}'$, we can decide in polynomial time if $\mathcal{Z}$ is independent in ${\sf M}_{\mathcal{D}}^{(k,f,g)}$.
\end{Lemma}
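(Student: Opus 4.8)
The plan is to observe that the independence condition for ${\sf M}_{\mathcal{D}}^{(k,f,g)}$ can be read off directly from its definition as a generalized partition matroid via Corollary \ref{gpmd}, and that the resulting test involves only quantities computable in polynomial time.

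First I would recall the correspondence established in the proof of Corollary \ref{gpmd}: setting $S=\mathcal{A}'$, $S_v=\delta_{\mathcal{A}'}^-(v)$, $\mu=k(|V|-1)$, $\alpha_v=k-g(v)$ and $\beta_v=k-f(v)$ for all $v\in V$, the matroid ${\sf M}_{\mathcal{D}}^{(k,f,g)}$ is precisely the generalized partition matroid ${\sf M}$ of Theorem \ref{genpart}. Since $f,g$ satisfy \eqref{gpc1} and \eqref{gpc2}, conditions \eqref{genpartcond1} and \eqref{genpartcond2} hold, so ${\sf M}$ is indeed a matroid and its set of independent sets is the explicit set $\mathcal{I}$ from Theorem \ref{genpart}. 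Translating $z_v=|\mathcal{Z}\cap S_v|=d_{\mathcal{Z}}^-(v)$, a subset $\mathcal{Z}\subseteq \mathcal{A}'$ is independent in ${\sf M}_{\mathcal{D}}^{(k,f,g)}$ if and only if $d_{\mathcal{Z}}^-(v)\le k-f(v)$ for every $v\in V$ and $\sum_{v\in V}\max\{k-g(v),d_{\mathcal{Z}}^-(v)\}\le k(|V|-1)$.

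Next I would observe that every ingredient of this test is computable in polynomial time. The partition classes $\delta_{\mathcal{A}'}^-(v)$ are obtained by grouping the dyperedges of $\mathcal{A}'$ by their heads, so the in-degrees $d_{\mathcal{Z}}^-(v)$ are found by a single pass over $\mathcal{Z}$. Given these values, verifying the at most $|V|$ upper-bound inequalities $d_{\mathcal{Z}}^-(v)\le k-f(v)$ and evaluating the single sum $\sum_{v\in V}\max\{k-g(v),d_{\mathcal{Z}}^-(v)\}$ against $k(|V|-1)$ are immediate arithmetic checks. Hence the whole decision procedure runs in polynomial time.

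There is no real obstacle here: the statement is an immediate consequence of the explicit description of independent sets furnished by Theorem \ref{genpart}, which is exactly why the lemma is phrased as a corollary of the definition of ${\sf M}_{\mathcal{D}}^{(k,f,g)}$. The only point requiring care is correctly translating the abstract parameters $\alpha_v,\beta_v,\mu$ back into the in-degree language of $\mathcal{D}$, so that membership in $\mathcal{I}$ becomes precisely the pair of conditions displayed above.
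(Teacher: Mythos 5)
Your proposal is correct and matches the paper's intent: the paper gives no explicit proof, stating only that the lemma is an immediate corollary of the definition of ${\sf M}_{\mathcal{D}}^{(k,f,g)}$, and your argument simply spells out that translation (independence iff $d_{\mathcal{Z}}^-(v)\le k-f(v)$ for all $v$ and $\sum_{v\in V}\max\{k-g(v),d_{\mathcal{Z}}^-(v)\}\le k(|V|-1)$, both trivially checkable). No issues.
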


\section{Flexible packings by matroid intersection}

We are now ready to show how to model packings of spanning mixed hyperarborescences with flexible roots via matroid intersection. 
\medskip

The following observation allows us to reduce the problem of 
flexible packings in a mixed hypergraph to finding 
feasible packings in its directed extension.

\begin{Lemma}\label{flexfeas}
Let $\mathcal{F}=(V,\mathcal{A}\cup \mathcal{E})$ be a mixed hypergraph, $\mathcal{D}_{\mathcal{F}}=(V,\mathcal{A}\cup \mathcal{A}_\mathcal{E})$ its directed extension, $f,g:V\rightarrow \mathbb{Z}_+$ integer valued functions and $k \in \mathbb{Z}_+$.
Then $\mathcal{F}$ has a $(k,f,g)$-flexible packing if and only if  $\mathcal{D}_{\mathcal{F}}$ has a $(k,f,g)$-feasible packing.
\end{Lemma}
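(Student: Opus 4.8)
The plan is to establish a root-preserving correspondence between the spanning mixed hyperarborescences of $\mathcal{F}$ and the spanning hyperarborescences of $\mathcal{D}_{\mathcal{F}}$, and then check that, under this correspondence, a $(k,f,g)$-flexible packing on one side turns into a $(k,f,g)$-feasible packing on the other. The bridge is the trimming operation: when a hyperedge $e$ in a mixed hyperarborescence is trimmed to an arc $uv$ with $u,v\in e$, this singles out a head $v$ for $e$, and the dyperedge $(e-v,v)\in\mathcal{A}_e$ records exactly this choice. Concretely, I would first prove the following single-object statement. For $r\in V$, a set $\mathcal{A}_B\subseteq\mathcal{A}$ together with a set $\mathcal{E}_B\subseteq\mathcal{E}$ forms a spanning mixed $r$-hyperarborescence in $\mathcal{F}$ if and only if there is, for each $e\in\mathcal{E}_B$, a head $v_e\in e$ such that $\mathcal{A}_B\cup\{(e-v_e,v_e):e\in\mathcal{E}_B\}$ is the dyperedge set of a spanning $r$-hyperarborescence in $\mathcal{D}_{\mathcal{F}}$. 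This is immediate from the definition of trimming: a trimming of the mixed object sends each $a\in\mathcal{A}_B$ to an arc entering $head(a)$ and each $e\in\mathcal{E}_B$ to an arc $uv$; setting $v_e=v$, the dyperedge $(e-v_e,v_e)$ can be trimmed to the same arc $uv$ since $u\in e-v_e$, and conversely. Hence the two objects admit precisely the same trimmings to $r$-arborescences, so one is a spanning (mixed) $r$-hyperarborescence exactly when the other is.

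For the forward direction, I would take a $(k,f,g)$-flexible packing $\{\mathcal{B}_i:i\in\{1,\dots,k\}\}$ in $\mathcal{F}$, fix for each $i$ a trimming of $\mathcal{B}_i$ to an $r_i$-arborescence, and replace each hyperedge $e\in\mathcal{E}(\mathcal{B}_i)$ by the dyperedge $(e-v_e,v_e)\in\mathcal{A}_e$ determined by its head in that trimming, leaving the dyperedges of $\mathcal{A}(\mathcal{B}_i)$ unchanged. By the single-object correspondence each resulting $\mathcal{B}_i'$ is a spanning $r_i$-hyperarborescence in $\mathcal{D}_{\mathcal{F}}$ with the same root $r_i$, so the root bounds imposed by $f$ and $g$ are preserved. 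Since the $\mathcal{B}_i$ are pairwise dyperedge- and hyperedge-disjoint, each $a\in\mathcal{A}$ and each $e\in\mathcal{E}$ lies in at most one member of the packing; therefore the $\mathcal{B}_i'$ are pairwise dyperedge-disjoint, and for every $e\in\mathcal{E}$ at most one dyperedge of $\mathcal{A}_e$ appears in $\bigcup_i\mathcal{A}(\mathcal{B}_i')$. This is exactly $(k,f,g)$-feasibility.

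For the backward direction, I would take a $(k,f,g)$-feasible packing $\{\mathcal{B}_i':i\in\{1,\dots,k\}\}$ in $\mathcal{D}_{\mathcal{F}}$ and replace each dyperedge lying in some bundle $\mathcal{A}_e$ by the hyperedge $e$, leaving the dyperedges of $\mathcal{A}$ unchanged; again the single-object correspondence shows each $\mathcal{B}_i$ is a spanning mixed $r_i$-hyperarborescence with the same root, so the root bounds are preserved. Dyperedge-disjointness of the $\mathcal{B}_i'$ keeps the dyperedges of $\mathcal{A}$ disjoint across the $\mathcal{B}_i$, while the bundle restriction, namely that at most one dyperedge of $\mathcal{A}_e$ is used in the whole packing, guarantees that each $e$ reappears in at most one $\mathcal{B}_i$, so the $\mathcal{B}_i$ are hyperedge-disjoint. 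Hence $\{\mathcal{B}_i\}$ is a $(k,f,g)$-flexible packing, completing the equivalence.

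I expect the main obstacle to be the careful verification of the single-object correspondence together with the bookkeeping that links the bundle restriction in $\mathcal{D}_{\mathcal{F}}$ to hyperedge-disjointness in $\mathcal{F}$: one must check that orienting a hyperedge via its head is consistent with an actual trimming on both sides, and that ``at most one dyperedge of $\mathcal{A}_e$ in the entire packing'' is genuinely equivalent to ``$e$ is used by at most one mixed hyperarborescence.'' Once these two points are settled, root preservation and the spanning-arborescence structure follow automatically from the definition of trimming.
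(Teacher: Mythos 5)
Your proof is correct and follows essentially the same route as the paper: trim each mixed hyperarborescence, use the head of the arc a hyperedge is trimmed to in order to select the orientation from its bundle, and observe that the bundle restriction in $\mathcal{D}_{\mathcal{F}}$ corresponds exactly to hyperedge-disjointness in $\mathcal{F}$. Your single-object correspondence is just a more explicit statement of the trimming argument the paper uses implicitly.
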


\begin{proof}
First suppose that $\mathcal{F}$ has a $(k,f,g)$-flexible packing {\boldmath$\mathcal{B}$} $=\{${\boldmath$\mathcal{B}_i$} $:i \in\{1,\ldots,k\}\}$. Then there is a $(k,f,g)$-flexible packing {\boldmath$B$} $=\{${\boldmath$B_i$} $:i \in \{1,\ldots,k\}\}$ of arborescences such that $B_i$ is a trimming of $\mathcal{B}_i$ for all $i \in \{1,\ldots,k\}$. For every $e \in \mathcal{E}$, let $v_e$ be the head of the arc to which $e$ is trimmed in $B$ and let $\vec{e}\in \mathcal{A}_e$ be the orientation of $e$ where $v_e$ is chosen to be its head. Then the set of dyperedges $\mathcal{A}(\mathcal{B})\cup \{\vec{e}:e \in \mathcal{E}(\mathcal{B})\}$ can be trimmed to $B$ and it contains at most one dyperedge of $\mathcal{A}_e$ for all $e \in \mathcal{E}$. It follows that $\mathcal{D}_{\mathcal{F}}$ has a $(k,f,g)$-feasible packing.

Now suppose that $\mathcal{D}_{\mathcal{F}}$ has a $(k,f,g)$-feasible packing $\mathcal{B}$. By definition, $\mathcal{A}_{\mathcal{E}}\cap \mathcal{A}(\mathcal{B})$ contains at most one dyperedge in $\mathcal{A}_e$ for all $e \in \mathcal{E}$. Replacing a dyperedge in $\mathcal{A}_e\cap \mathcal{A}(\mathcal{B})$ by $e$ for all $e \in \mathcal{E}$, we obtain the dyper- and hyperedge set of a $(k,f,g)$-flexible packing in $\mathcal{F}$.
\end{proof}

%


We now give the characterization via matroid intersection which is the core of this article.


\begin{Theorem}\label{matroidflex}
Let $\mathcal{F}=(V,\mathcal{A}\cup \mathcal{E})$ be a mixed hypergraph, $\mathcal{D}_{\mathcal{F}}=(V,\mathcal{A}'=\mathcal{A}\cup \mathcal{A}_\mathcal{E})$ its directed extension, $f,g:V\rightarrow \mathbb{Z}_+$ integer valued functions and $k \in \mathbb{Z}_+$.
Suppose that \eqref{gpc1} and \eqref{gpc2} are satisfied.
Then the dyperedge sets of the $(k,f,g)$-feasible packings in $\mathcal{D}_{\mathcal{F}}$ are exactly the common independent sets of size $k(|V|-1)$ of ${\sf M}^k_{\mathcal{F}}$ and ${\sf M}_{\mathcal{D}_{\mathcal{F}}}^{(k,f,g)}$.
\end{Theorem}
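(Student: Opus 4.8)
The plan is to prove the two inclusions separately, using as the crucial bridge the identity highlighted in the introduction: in any packing of $k$ spanning arborescences, the number $\rho(v)$ of members rooted at a vertex $v$ satisfies $\rho(v)=k-d^-(v)$. After trimming, the in-degree of $v$ equals the number of dyperedges of the packing with head $v$, i.e. $d_{\mathcal{Z}}^-(v)$ for the dyperedge set $\mathcal{Z}$, so this identity turns the root-count constraints $f(v)\le\rho(v)\le g(v)$ into exactly the in-degree constraints $f(v)\le k-d_{\mathcal{Z}}^-(v)\le g(v)$ defining the bases of ${\sf M}_{\mathcal{D}_{\mathcal{F}}}^{(k,f,g)}$ in Corollary \ref{gpmd}.

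For the forward direction I would take a $(k,f,g)$-feasible packing $\mathcal{B}$ and set $\mathcal{Z}=\mathcal{A}(\mathcal{B})$. Each spanning hyperarborescence in the dypergraph $\mathcal{D}_{\mathcal{F}}$ trims to a spanning arborescence with $|V|-1$ arcs and hence consists of exactly $|V|-1$ dyperedges; as the $k$ members are dyperedge-disjoint, $|\mathcal{Z}|=k(|V|-1)$. Feasibility guarantees that $\mathcal{Z}$ meets each bundle $\mathcal{A}_e$ in at most one dyperedge, so through the parallel-copy construction of ${\sf M}^k_{\mathcal{F}}$ its independence is governed by ${\sf M}^k_{\mathcal{H}_{\mathcal{F}}}$; since the underlying hyperedge set of each member is a spanning hypertree, hence independent in ${\sf M}_{\mathcal{H}_{\mathcal{F}}}$, their disjoint union is independent in ${\sf M}^k_{\mathcal{H}_{\mathcal{F}}}$, making $\mathcal{Z}$ independent in ${\sf M}^k_{\mathcal{F}}$. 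Finally, using $\sum_{v}d_{\mathcal{Z}}^-(v)=|\mathcal{Z}|$ together with the root-count identity, $\mathcal{Z}$ satisfies the degree bounds, so by Corollary \ref{gpmd} it is a basis, in particular a common independent set of size $k(|V|-1)$.

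For the reverse direction I would start from a common independent set $\mathcal{Z}$ with $|\mathcal{Z}|=k(|V|-1)$. Condition \eqref{gpc2} forces the rank of ${\sf M}_{\mathcal{D}_{\mathcal{F}}}^{(k,f,g)}$ to be $k(|V|-1)$, so $\mathcal{Z}$ is a basis and, by Corollary \ref{gpmd}, $f(v)\le k-d_{\mathcal{Z}}^-(v)\le g(v)$ for all $v$. I then define a root multiset $R$ by $|R\cap v|=k-d_{\mathcal{Z}}^-(v)$, which is non-negative since $k-d_{\mathcal{Z}}^-(v)\ge f(v)\ge0$ and has total size $\sum_v\bigl(k-d_{\mathcal{Z}}^-(v)\bigr)=k|V|-|\mathcal{Z}|=k$. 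The aim is to apply Theorem \ref{hyperedmonds2} to $\mathcal{D}_{\mathcal{F}}$ with this $R$: its degree hypothesis $d_{\mathcal{Z}}^-(v)=k-|R\cap v|$ holds by construction, and its hypothesis that the underlying hyperedge set of $\mathcal{Z}$ is a packing of $k$ spanning hypertrees I would obtain by a counting argument — independence in ${\sf M}^k_{\mathcal{F}}$ with at most one element per bundle yields a partition of the $k(|V|-1)$ underlying hyperedges into $k$ hyperforests, each of size at most $|V|-1$, which forces each to have exactly $|V|-1$ edges and hence to be a maximal hyperforest, partition-connected by Proposition \ref{rangpartcon}, i.e. a spanning hypertree. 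Theorem \ref{hyperedmonds2} then produces a packing of spanning $r$-hyperarborescences with root multiset $R$ and dyperedge set $\mathcal{Z}$; the degree bounds translate back to $f(v)\le\rho(v)\le g(v)$ and the at-most-one-per-bundle property supplies the last feasibility condition, so the packing is $(k,f,g)$-feasible.

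The main obstacle I anticipate is the ``spanning hypertree'' step in the reverse direction: converting the abstract fact that $\mathcal{Z}$ is $k$-hyperforest-independent of maximum size into the concrete decomposition into $k$ \emph{spanning} hypertrees demanded by Theorem \ref{hyperedmonds2}. The delicate points are checking that the at-most-one-per-bundle condition lets one pass cleanly between $\mathcal{Z}\subseteq\mathcal{A}\cup\mathcal{A}_{\mathcal{E}}$ and its underlying hyperedges, and carrying out the extremal count that each of the $k$ hyperforests is forced to be spanning.
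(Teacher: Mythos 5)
Your proposal is correct and follows essentially the same route as the paper: the forward direction via the root-count identity $\rho(v)=k-d_{\mathcal Z}^-(v)$, and the reverse direction by defining the root multiset $R$ from the in-degrees and invoking Theorem \ref{hyperedmonds2}, with the extremal count forcing the $k$ hyperforests to be spanning hypertrees. The step you flag as delicate is handled in the paper at the same (in fact slightly lesser) level of detail, so there is nothing to add.
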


\begin{proof} 
For the sake of simplicity, we use {\boldmath${\sf M}_1$ and ${\sf M}_2$} for ${\sf M}^k_{\mathcal{F}}$ and ${\sf M}_{\mathcal{D}_{\mathcal{F}}}^{(k,f,g)}$, respectively. 
As \eqref{gpc1} and \eqref{gpc2} are satisfied, Corollary \ref{gpmd} yields that ${\sf M}_2$ is well-defined.
First, let {\boldmath$\mathcal{Z}$} be the dyperedge set of a $(k,f,g)$-feasible packing in $\mathcal{D}_{\mathcal{F}}$. Then the underlying undirected hypergraph of $(V,\mathcal{Z})$ is the union of $k$ hyperedge-disjoint spanning hypertrees and  $\mathcal{Z}$ contains at most one dyperedge of the bundle $A_e$ for all $e\in \mathcal{E}$. It follows that $\mathcal{Z}$ is an independent set of ${\sf M}_1$. As $\mathcal{Z}$ is the dyperedge set of a packing of $k$ spanning hyperarborescences, we have $|\mathcal{Z}|=k(|V|-1)$. Since the packing is $(k,f,g)$-feasible, every vertex $v$ is the root of at least $f(v)$ and at most $g(v)$ of the $k$ spanning  hyperarborescences of the packing. It follows that $k-g(v)\leq d_\mathcal{Z}^-(v)\leq k-f(v)$, so $\mathcal{Z}$ is an independent set of ${\sf M}_2$.
\medskip

Now let {\boldmath$\mathcal{Z}$} be a common independent set of ${\sf M}_1$ and ${\sf M}_2$ of size $k(|V|-1)$. Then, by $f \geq 0$, the underlying hypergraph of $(V,\mathcal{Z})$ is the union of $k$ hyperedge-disjoint spanning hypertrees and  $d_\mathcal{Z}^-(v)\leq k-f(v)\leq k$ for all $v \in V$. Let $R$ be the multiset in $V$ in which every vertex $v$ in $V$ is contained $k-d^-_{\mathcal{Z}}(v)$ times. Observe that this value is nonnegative for all $v \in V$. As $|\mathcal{Z}|=k(|V|-1)$, we have $|R|=\sum_{v \in V}(k-d^-_{\mathcal{Z}}(v))=k|V|-|\mathcal{Z}|=k|V|-k(|V|-1)=k$. Also, by construction $d^-_{\mathcal{Z}}(v)=k-|R \cap v|$ for all $v \in V$. Theorem \ref{hyperedmonds2} therefore implies that $\mathcal{Z}$ is the set of dyperedges of the union of $k$ dyperedge-disjoint spanning hyperarborescences with root set $R$, so each $v\in V$ is the root of $k-d_\mathcal{Z}^-(v)$ of them. As $\mathcal{Z}$ is an independent set of ${\sf M}_2$, we obtain that $k-g(v)\leq d_\mathcal{Z}^-(v)\leq k-f(v)$, so $f(v)\leq k-d_\mathcal{Z}^-(v)\leq g(v)$. Finally, as $\mathcal{Z}$ is independent in ${\sf M}_1$, $\mathcal{Z}$ contains at most one dyperedge of the bundle $A_e$ for all $e\in E$. It follows that $\mathcal{Z}$ is the dyperedge set of a $(k,f,g)$-feasible packing in $\mathcal{D}_{\mathcal{F}}$.
\end{proof}

\section{The proofs of Theorems \ref{new} and \ref{algoflex}}

This section is dedicated to concluding Theorems \ref{new} and \ref{algoflex} from the matroidal characterization found in Theorem \ref{matroidflex}. We first need two slightly technical lemmas. The first one shows the necessity in Theorem \ref{new} and is also used in the proof of Theorem \ref{algoflex}.
\begin{Lemma}\label{necessity}
Let $\mathcal{F}=(V,\mathcal{A}\cup \mathcal{E})$ be a mixed hypergraph, $f,g:V \rightarrow \mathbb{Z}_{+}$ integer functions and $k \in \mathbb{Z}_{+}$. If there exists a $(k,f,g)$-flexible packing  in $\mathcal{F},$ then  \eqref{fg}, \eqref{dktgy} and \eqref{dktgy2} are satisfied.
\end{Lemma}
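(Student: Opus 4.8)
The plan is to argue directly from a given flexible packing, reserving the matroid machinery for the sufficiency direction. Let $\{\mathcal{B}_1,\ldots,\mathcal{B}_k\}$ be a $(k,f,g)$-flexible packing, where $\mathcal{B}_i$ is a spanning mixed $r_i$-hyperarborescence, and for each $v\in V$ let $\rho(v)$ denote the number of indices $i$ with $r_i=v$. By the definition of a flexible packing we have $f(v)\le \rho(v)\le g(v)$ for every $v\in V$, so \eqref{fg} follows immediately. I will also use that $|\{i:r_i\in X\}|=\sum_{v\in X}\rho(v)$ for every $X\subseteq V$.

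To treat \eqref{dktgy} and \eqref{dktgy2} together I would first trim each $\mathcal{B}_i$ to a spanning $r_i$-arborescence $B_i$. Since the $\mathcal{B}_i$ are pairwise dyper- and hyperedge-disjoint and trimming replaces each dyperedge/hyperedge by a single arc, distinct arcs of the resulting arborescence packing originate from distinct elements of $\mathcal{A}\cup\mathcal{E}$. The key elementary fact is that in a spanning $r_i$-arborescence every nonempty set avoiding $r_i$ has in-degree at least $1$. Hence, for a subpartition $\mathcal{P}=\{X_1,\ldots,X_t\}$ and each $i$,
\[
\sum_{j=1}^{t} d^-_{B_i}(X_j)\;\ge\;|\{j:r_i\notin X_j\}|\;=\;t-[\,r_i\in\cup\mathcal{P}\,],
\]
where the equality uses that the $X_j$ are disjoint, so $r_i$ lies in at most one of them.

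Summing over $i$ gives $\sum_{i,j}d^-_{B_i}(X_j)\ge k|\mathcal{P}|-|\{i:r_i\in\cup\mathcal{P}\}|$, and rewriting $t-[\,r_i\in\cup\mathcal{P}\,]=(t-1)+[\,r_i\in V-\cup\mathcal{P}\,]$ gives $\sum_{i,j}d^-_{B_i}(X_j)\ge k(|\mathcal{P}|-1)+|\{i:r_i\in V-\cup\mathcal{P}\}|$. Substituting $|\{i:r_i\in\cup\mathcal{P}\}|=\sum_{v\in\cup\mathcal{P}}\rho(v)\le g(\cup\mathcal{P})$ into the first and $|\{i:r_i\in V-\cup\mathcal{P}\}|=\sum_{v\in V-\cup\mathcal{P}}\rho(v)\ge f(V-\cup\mathcal{P})$ into the second produces exactly the two desired right-hand sides $k|\mathcal{P}|-g(\cup\mathcal{P})$ and $k(|\mathcal{P}|-1)+f(V-\cup\mathcal{P})$.

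Finally I would connect the left-hand side to $e_{\mathcal{E}\cup\mathcal{A}}(\mathcal{P})$. As the parts are disjoint, each trimmed arc enters at most one $X_j$, so $\sum_{i,j}d^-_{B_i}(X_j)$ equals the number of packing arcs entering some part. An entering arc obtained from a dyperedge $a$ satisfies $head(a)\in X_j$ and $tail(a)-X_j\neq\emptyset$, so $a\in\delta^-_{\mathcal{A}}(X_j)$; one obtained from a hyperedge $e$ satisfies $e\cap X_j\neq\emptyset$ and $e-X_j\neq\emptyset$, so $e\in\delta_{\mathcal{E}}(X_j)$. Mapping each entering arc to its originating dyper-/hyperedge therefore injects the entering arcs into $\mathcal{A}(\mathcal{P})\cup\mathcal{E}(\mathcal{P})$, whence $\sum_{i,j}d^-_{B_i}(X_j)\le e_{\mathcal{E}\cup\mathcal{A}}(\mathcal{P})$; combining with the two lower bounds yields \eqref{dktgy} and \eqref{dktgy2}. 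The step requiring the most care is this injectivity: one must invoke both the edge-disjointness of the packing and the one-arc-per-edge nature of trimming to rule out charging any element of $\mathcal{A}\cup\mathcal{E}$ twice, and observe that a single hyperedge meeting several parts is nonetheless charged only once, since its unique trimmed arc enters only the part containing its head.
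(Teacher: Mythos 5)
Your proof is correct and follows essentially the same route as the paper's: both are direct counting arguments that bound $e_{\mathcal{E}\cup\mathcal{A}}(\mathcal{P})$ from below by the number of packing elements entering the parts, use that each arborescence rooted outside a part contributes at least one entering arc to it, and then bound the root multiplicities via $f\le\rho\le g$. The only cosmetic difference is that you trim down to a genuine arborescence packing while the paper passes to the directed extension via Lemma \ref{flexfeas} and works with $d^-_{\mathcal{Z}}(X)\ge k-s(X)$; the injectivity point you carefully flag corresponds exactly to the paper's observation that the packing uses at most one dyperedge per bundle $\mathcal{A}_e$.
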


\begin{proof} By Lemma \ref{flexfeas}, $\mathcal{D}_{\mathcal{F}}$ has a $(k,f,g)$-feasible packing {\boldmath$\mathcal{B}$} $=\{${\boldmath$\mathcal{B}_i$} $:i \in \{1,\ldots,k\}\}$. Let {\boldmath$s(X)$} denote the number of hyperarborescences in $\mathcal{B}$ whose roots are in the vertex set $X.$ Then for every vertex $v \in V,$ $f(v)\le s(v)\le g(v)$ and hence \eqref{fg} is satisfied. Let now  {\boldmath$\mathcal{P}$} be a subpartition of $V.$ Let {\boldmath$\mathcal{Z}$} be the dyperedge set of $\mathcal{B}$. By definition, $\mathcal{Z}$ contains at most one dyperedge in $\mathcal{A}_e$ for all $e \in \mathcal{E}$. It follows that $e_{\mathcal{E}\cup\mathcal{A}}(\mathcal{P})\ge \sum_{X\in \mathcal{P}}d_{\mathcal{Z}}^-(X).$ Since $\mathcal{B}$ is a packing of spanning hyperarborescences, we have $d_{\mathcal{Z}}^-(X)\ge k-s(X).$ Thus $e_{\mathcal{E}\cup\mathcal{A}}(\mathcal{P})\ge k|\mathcal{P}|-s(\cup\mathcal{P}).$ Since $s(V)=k$ and $f(v)\le s(v)\le g(v)$ for all $v \in V,$ we have $s(\cup\mathcal{P})=s(V)-s(V-\cup\mathcal{P})\leq k-f(V-\cup\mathcal{P})$, yielding \eqref{dktgy}. Further, we have $s(\cup\mathcal{P})\leq g(\cup\mathcal{P})$ yielding \eqref{dktgy2}.
\end{proof}

The second one allows to use Theorem \ref{matroidflex} in the proofs of Theorems \ref{new} and \ref{algoflex}.
\begin{Lemma}\label{condgenpart}
Let $\mathcal{F}=(V,\mathcal{A}\cup \mathcal{E})$ be a mixed hypergraph, $\mathcal{D}_{\mathcal{F}}=(V,\mathcal{A}'=\mathcal{A}\cup \mathcal{A}_{\mathcal{E}})$ its directed extension, $f,g:V \rightarrow \mathbb{Z}_{+}$ integer functions and $k \in \mathbb{Z}_{+}$.
If \eqref{fg}, \eqref{dktgy} and \eqref{dktgy2} are satisfied, then \eqref{gpc1} and \eqref{gpc2} are satisfied.
\end{Lemma}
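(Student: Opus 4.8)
The plan is to derive \eqref{gpc1} and \eqref{gpc2} by evaluating the hypotheses \eqref{fg}, \eqref{dktgy} and \eqref{dktgy2} on a handful of carefully chosen subpartitions. The one computation I would record at the outset is the identity $d_{\mathcal{A}'}^-(v) = |\delta_{\mathcal{A}}^-(v)| + |\delta_{\mathcal{E}}(v)| = e_{\mathcal{E}\cup\mathcal{A}}(\{\{v\}\})$, which holds because in the directed extension each dyperedge of $\mathcal{A}$ with head $v$ and each hyperedge of $\mathcal{E}$ containing $v$ contributes exactly one dyperedge entering $v$. This turns degree quantities into the edge-counts that appear on the left of \eqref{dktgy} and \eqref{dktgy2}.

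First I would settle \eqref{gpc1}, which unfolds into four inequalities. The bound $k-g(v)\le k-f(v)$ is immediate from \eqref{fg}, and $0\le d_{\mathcal{A}'}^-(v)$ is trivial. For $0\le k-f(v)$ I would apply \eqref{dktgy} to the empty subpartition to get $f(V)\le k$, hence $f(v)\le k$ since $f\ge 0$. For $k-g(v)\le d_{\mathcal{A}'}^-(v)$ I would apply \eqref{dktgy2} to the single-class subpartition $\mathcal{P}=\{\{v\}\}$, which gives $d_{\mathcal{A}'}^-(v)=e_{\mathcal{E}\cup\mathcal{A}}(\{\{v\}\})\ge k-g(v)$. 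Together these yield \eqref{gpc1}.

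For the left inequality of \eqref{gpc2}, I set $W=\{v:g(v)<k\}$, so that $\sum_{v}\max\{k-g(v),0\}=k|W|-g(W)$. The needed preliminary is $g(V)\ge k$, which I would obtain by applying \eqref{dktgy2} to the single-class subpartition $\mathcal{P}=\{V\}$: since no hyperedge or dyperedge leaves $V$, we have $e_{\mathcal{E}\cup\mathcal{A}}(\{V\})=0$, forcing $g(V)\ge k$. Then if $W\subsetneq V$ we have $|W|\le|V|-1$ and $k|W|-g(W)\le k(|V|-1)$ because $g\ge 0$; and if $W=V$ we have $k|W|-g(W)=k|V|-g(V)\le k(|V|-1)$ by $g(V)\ge k$.

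The main work, and the step I expect to be the genuine obstacle, is the right inequality of \eqref{gpc2}. I would rewrite the target via $\min\{a,b\}=a-\max\{a-b,0\}$ as $\sum_{v}\min\{k-f(v),d_{\mathcal{A}'}^-(v)\}=k|V|-f(V)-\sum_{v}\max\{k-f(v)-d_{\mathcal{A}'}^-(v),0\}$, so that the desired bound $\ge k(|V|-1)$ becomes $f(V-U)+k|U|-\sum_{v\in U}d_{\mathcal{A}'}^-(v)\le k$, where $U=\{v:d_{\mathcal{A}'}^-(v)<k-f(v)\}$ is the set on which the $\max$-terms are nonzero. I would then apply \eqref{dktgy} to the subpartition $\mathcal{P}=\{\{v\}:v\in U\}$, for which $\cup\mathcal{P}=U$ and $|\mathcal{P}|=|U|$. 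The crucial point is the subadditivity $e_{\mathcal{E}\cup\mathcal{A}}(\mathcal{P})\le\sum_{v\in U}d_{\mathcal{A}'}^-(v)$, which holds because a hyperedge meeting $U$ in several vertices is counted once in the union $\mathcal{E}(\mathcal{P})$ but once per incident vertex in the degree sum (the dyperedge parts match exactly, by uniqueness of heads). Combining $\sum_{v\in U}d_{\mathcal{A}'}^-(v)\ge e_{\mathcal{E}\cup\mathcal{A}}(\mathcal{P})\ge k(|U|-1)+f(V-U)$ and rearranging gives precisely $f(V-U)+k|U|-\sum_{v\in U}d_{\mathcal{A}'}^-(v)\le k$. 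The delicate points here are choosing $U$ so that the $\max$-terms vanish off $U$, and orienting the subadditivity inequality correctly; once these are in place the algebra collapses cleanly.
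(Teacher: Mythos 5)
Your proof is correct and follows essentially the same route as the paper's: both derive $f(V)\le k$ and $g(V)\ge k$ from the empty subpartition and $\{V\}$, obtain \eqref{gpc1} from singletons, and prove the right inequality of \eqref{gpc2} by applying \eqref{dktgy} to the subpartition into singletons of the deficient vertices together with the observation that every element of $\mathcal{E}(\mathcal{P})\cup\mathcal{A}(\mathcal{P})$ contributes at least one entering dyperedge to the degree sum. The only cosmetic difference is that your set $U$ is (up to ties) the complement of the paper's $V''$ and you rewrite the $\min$ via a $\max$ rather than splitting the sum directly.
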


\begin{proof} 
By \eqref{dktgy} for $\mathcal{P}=\emptyset$ and $f\ge 0$, we obtain $k=-k(|\emptyset|-1)\geq f(V-\emptyset)-e_{\mathcal{E}\cup\mathcal{A}}(\emptyset)=f(V).$
By \eqref{dktgy2} for $\mathcal{P}=\{V\}$, we obtain $k=k|\{V\}|\leq g(V)+e_{\mathcal{E}\cup\mathcal{A}}(\{V\})=g(V).$ We obtain $f(V)\leq k \leq g(V)$.

By \eqref{fg}, $k-g(v)\le k-f(v).$ By $f(V)\leq k$ and $f\geq 0$, we obtain $0\le k-f(V)\le k-f(v).$ By \eqref{dktgy2} for $\mathcal{P}=\{v\}$, we obtain $k-g(v)=k|\mathcal{P}|-g(\cup \mathcal{P})\le e_{\mathcal{E}\cup\mathcal{A}}(\mathcal{P})=e_{\mathcal{E}\cup\mathcal{A}}(\{v\})=d_{\mathcal{A}'}^-(v).$ Then, by $0\le d_{\mathcal{A}'}^-(v),$  \eqref{gpc1} follows.
\medskip

Let {\boldmath$V'$} $=\{v\in V: k-g(v)>0\}.$ 	 
We obtain $\sum_{v\in V}  \max\{k-g(v),0\}=\sum_{v\in V'}(k-g(v))=k(|V|-1)-(g(V')+k(|V-V'|-1)).$ If $V' \neq V$, by $g \geq 0$ we obtain $g(V')+k(|V-V'|-1)\geq g(V')\geq 0$. If $V'=V$, by $g(V)\geq k$, we obtain $g(V')+k(|V-V'|-1)= g(V)-k\geq 0$. This yields the first inequality of \eqref{gpc2}.
\medskip

Let {\boldmath$V''$} $=\{v\in V: k-f(v)<d_{\mathcal{A}'}^-(v)\}$. Let $\mathcal{P}=\{\{v\}:v\in V-V''\}$. Note that any element in ${\cal E}(\mathcal{P})\cup {\cal A}({\cal P})$ provides at least one dyperedge in ${\cal A}'$ entering a vertex in $V-V''.$ 
Hence, by \eqref{dktgy}, we obtain
\begin{align*}
 \sum_{v\in V} \min\{k-f(v),d_{\mathcal{A}'}^-(v)\}&=\sum_{v\in V''}(k-f(v))+\sum_{v\in V-V''}d_{\mathcal{A}'}^-(v)\\
&\ge (k|V''|-f(V''))+e_{\mathcal{E}\cup\mathcal{A}}(\mathcal{P})\\
&\geq (k|V''|-f(V''))+k(|\mathcal{P}|-1)+f(V-\cup\mathcal{P})\\
&= (k|V''|-f(V''))+k(|V-V''|-1)+f(V'')\\
&=k(|V|-1).
\end{align*}
 It follows that the second inequality of \eqref{gpc2} is  satisfied.
\end{proof}



We now show that Theorems \ref{matroidflex} and \ref{matroidintersection}(a) imply Theorem \ref{new}.

\begin{proofof}{Theorem \ref{new}}
Necessity is proved in Lemma \ref{necessity}. 

To see  sufficiency, suppose that  \eqref{fg}, \eqref{dktgy} and \eqref{dktgy2} are satisfied. 
 Let $\mathcal{D}_{\mathcal{F}}=(V,\mathcal{A'}=\mathcal{A}\cup \mathcal{A}_{\mathcal{E}})$ be the directed extension of $\mathcal{F}$. By Lemma \ref{condgenpart},  \eqref{gpc1} and \eqref{gpc2} are satisfied. It now follows from Corollary \ref{gpmd} that ${\sf M}_{\mathcal{D}_{\mathcal{F}}}^{(k,f,g)}$ is well-defined. Again, we use {\boldmath${\sf M}_1$ and ${\sf M}_2$} for ${\sf M}^k_{\mathcal{F}}$ and ${\sf M}_{\mathcal{D}_{\mathcal{F}}}^{(k,f,g)}$, respectively. 
Suppose for a contradiction that no $(k,f,g)$-flexible packing of mixed hyperarborescences exists in $\mathcal{F}$.
By Lemma \ref{flexfeas}, no $(k,f,g)$-feasible packing of hyperarborescences exists in $\mathcal{D}_{\mathcal{F}}$. Now Theorem \ref{matroidflex} implies that ${\sf M}_1$ and ${\sf M}_2$ do not have a common independent set of size $k(|V|-1)$.
The next result allows to fix a certain structure leading to a contradiction later.
\begin{Claim}\label{part1}
There exist a partition $\mathcal{P}$ of $V$ and  $\mathcal{K} \subseteq \mathcal{E}(\mathcal{P})$ such that
\begin{equation}\label{pkz}
k(|\mathcal{P}|-1)>|\mathcal{K}|+r_{{\sf M}_2}(\mathcal{\mathcal{A}(\mathcal{P})\cup \mathcal{A}_{\mathcal{E}(\mathcal{P})-\mathcal{K}}}).
\end{equation}
\end{Claim}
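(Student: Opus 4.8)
The plan is to derive Claim \ref{part1} as a direct consequence of the failure of the matroid intersection criterion from Theorem \ref{matroidintersection}(a). Since we have just established that ${\sf M}_1$ and ${\sf M}_2$ admit no common independent set of size $k(|V|-1)$, and since $\mu = k(|V|-1)$ is the target size, Theorem \ref{matroidintersection}(a) guarantees the existence of some $\mathcal{Z} \subseteq \mathcal{A}'$ with
\[
r_{{\sf M}_1}(\mathcal{Z}) + r_{{\sf M}_2}(\mathcal{A}' - \mathcal{Z}) < k(|V|-1).
\]
The core of the proof is then to translate this abstract violating set $\mathcal{Z}$ into the combinatorial objects $\mathcal{P}$ and $\mathcal{K}$. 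First I would apply Proposition \ref{r1} to expand $r_{{\sf M}_1}(\mathcal{Z})$: this yields a partition $\mathcal{P}$ of $V$ attaining the minimum, so that $r_{{\sf M}_1}(\mathcal{Z}) = |\mathcal{Z}\cap \mathcal{A}(\mathcal{P})| + |\{e \in \mathcal{E}(\mathcal{P}):\mathcal{Z}\cap \mathcal{A}_e \neq \emptyset\}| + k(|V|-|\mathcal{P}|)$.

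The second step is to define $\mathcal{K}$ from this data. The natural choice is to let $\mathcal{K}$ consist of exactly those $e \in \mathcal{E}(\mathcal{P})$ that are ``hit'' by $\mathcal{Z}$, i.e. $\mathcal{K} = \{e \in \mathcal{E}(\mathcal{P}): \mathcal{Z}\cap \mathcal{A}_e \neq \emptyset\}$, so that the hyperedge-counting term in the rank formula is exactly $|\mathcal{K}|$. The remaining task is to bound $r_{{\sf M}_2}(\mathcal{A}'-\mathcal{Z})$ from above by $r_{{\sf M}_2}(\mathcal{A}(\mathcal{P})\cup \mathcal{A}_{\mathcal{E}(\mathcal{P})-\mathcal{K}})$, which would let me absorb the terms $|\mathcal{Z}\cap \mathcal{A}(\mathcal{P})|$ and $k(|V|-|\mathcal{P}|)$ and rearrange the violating inequality into the form \eqref{pkz}. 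To do this I would argue that the dyperedges I want to discard from $\mathcal{A}'-\mathcal{Z}$ are either redundant in ${\sf M}_2$ or do not decrease its rank. Concretely, the dyperedges in $\mathcal{A}(\mathcal{P}) \cap \mathcal{Z}$ and in $\mathcal{A}_{e}$ for $e \in \mathcal{K}$ are the ones accounted for in $r_{{\sf M}_1}(\mathcal{Z})$; the complementary set $\mathcal{A}(\mathcal{P}) \cup \mathcal{A}_{\mathcal{E}(\mathcal{P})-\mathcal{K}}$ should contain all of $\mathcal{A}' - \mathcal{Z}$ up to a discrepancy I can control using monotonicity of the rank function.

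I expect the main obstacle to be precisely this containment-and-monotonicity bookkeeping: making sure that $\mathcal{A}'-\mathcal{Z} \subseteq \mathcal{A}(\mathcal{P}) \cup \mathcal{A}_{\mathcal{E}(\mathcal{P})-\mathcal{K}}$ (or that the difference contributes nothing to the ${\sf M}_2$-rank), so that by monotonicity $r_{{\sf M}_2}(\mathcal{A}'-\mathcal{Z}) \le r_{{\sf M}_2}(\mathcal{A}(\mathcal{P})\cup \mathcal{A}_{\mathcal{E}(\mathcal{P})-\mathcal{K}})$. The subtle point is handling the dyperedges $\mathcal{A}_e$ for $e \in \mathcal{E}(\mathcal{P})-\mathcal{K}$: by the choice of $\mathcal{K}$ these bundles are disjoint from $\mathcal{Z}$, so they already lie in $\mathcal{A}'-\mathcal{Z}$, which is consistent with the target set. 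Meanwhile dyperedges of $\mathcal{A}$ crossing no class of $\mathcal{P}$, and bundle-edges $\mathcal{A}_e$ for $e \notin \mathcal{E}(\mathcal{P})$, need to be shown not to increase the relevant rank beyond what the formula \eqref{pkz} permits. Once the containment is verified, substituting the expanded form of $r_{{\sf M}_1}(\mathcal{Z})$ into the strict violating inequality and cancelling the common $k(|V|-|\mathcal{P}|)$ and $|\mathcal{Z}\cap \mathcal{A}(\mathcal{P})|$ terms yields \eqref{pkz} immediately.
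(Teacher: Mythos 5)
Your setup is the same as the paper's: invoke Theorem \ref{matroidintersection}(a) to get a violating set $\mathcal{Z}\subseteq\mathcal{A}'$, expand $r_{{\sf M}_1}(\mathcal{Z})$ via Proposition \ref{r1} to obtain $\mathcal{P}$, and set $\mathcal{K}=\{e\in\mathcal{E}(\mathcal{P}):\mathcal{Z}\cap\mathcal{A}_e\neq\emptyset\}$. However, your final step runs the key inequality in the wrong direction, and the containment you propose to justify it is false. Writing $\mathcal{T}=\mathcal{A}(\mathcal{P})\cup\mathcal{A}_{\mathcal{E}(\mathcal{P})-\mathcal{K}}$, substitution of the rank formula and cancellation of $k(|V|-|\mathcal{P}|)$ turn the violating inequality into $k(|\mathcal{P}|-1)>|\mathcal{Z}\cap\mathcal{A}(\mathcal{P})|+|\mathcal{K}|+r_{{\sf M}_2}(\mathcal{A}'-\mathcal{Z})$; to deduce \eqref{pkz} you therefore need the \emph{lower} bound $|\mathcal{Z}\cap\mathcal{A}(\mathcal{P})|+r_{{\sf M}_2}(\mathcal{A}'-\mathcal{Z})\geq r_{{\sf M}_2}(\mathcal{T})$, i.e.\ an upper bound on $r_{{\sf M}_2}(\mathcal{T})$. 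Your proposed inequality $r_{{\sf M}_2}(\mathcal{A}'-\mathcal{Z})\leq r_{{\sf M}_2}(\mathcal{T})$ bounds the wrong quantity: replacing $r_{{\sf M}_2}(\mathcal{A}'-\mathcal{Z})$ by something larger on the small side of a strict ``$>$'' does not preserve the inequality. Moreover, the containment $\mathcal{A}'-\mathcal{Z}\subseteq\mathcal{T}$ you want to use generally fails: a dyperedge whose head and tail all lie in one class of $\mathcal{P}$, or the bundle of a hyperedge contained in a single class, can belong to $\mathcal{A}'-\mathcal{Z}$ while belonging to neither $\mathcal{A}(\mathcal{P})$ nor $\mathcal{A}_{\mathcal{E}(\mathcal{P})-\mathcal{K}}$.

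The step that actually works is the reverse containment, measured by what $\mathcal{T}$ loses when $\mathcal{Z}$ is removed: since $\mathcal{A}_e\cap\mathcal{Z}=\emptyset$ for every $e\in\mathcal{E}(\mathcal{P})-\mathcal{K}$ by the choice of $\mathcal{K}$, we have $\mathcal{T}\cap\mathcal{Z}\subseteq\mathcal{Z}\cap\mathcal{A}(\mathcal{P})$, hence $\mathcal{T}\subseteq(\mathcal{A}'-\mathcal{Z})\cup(\mathcal{Z}\cap\mathcal{A}(\mathcal{P}))$. Monotonicity together with subcardinality of $r_{{\sf M}_2}$ then gives $r_{{\sf M}_2}(\mathcal{T})\leq r_{{\sf M}_2}(\mathcal{A}'-(\mathcal{Z}-\mathcal{A}(\mathcal{P})))\leq r_{{\sf M}_2}(\mathcal{A}'-\mathcal{Z})+|\mathcal{Z}\cap\mathcal{A}(\mathcal{P})|$, which is exactly the inequality that absorbs the term $|\mathcal{Z}\cap\mathcal{A}(\mathcal{P})|$ and yields \eqref{pkz}. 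This is the one-line argument the paper uses; with it your plan goes through, without it the last step of your proposal does not close.
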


\begin{proof}
As ${\sf M}_1$ and ${\sf M}_2$ do not have a common independent set of size $k(|V|-1)$, Theorem \ref{matroidintersection}(a) implies that there exists a dyperedge set {\boldmath$\mathcal{Z}'$} $\subseteq \mathcal{A}'$ such that  $k(|V|-1)>r_{{\sf M}_1}(\mathcal{Z}')+r_{{\sf M}_2}(\mathcal{A}'-\mathcal{Z}')$. By Proposition \ref{r1}, there exists a partition {\boldmath$\mathcal{P}$} of $V$ such that, for {\boldmath$\mathcal{K}$} $=\{e \in \mathcal{E}(\mathcal{P}):\mathcal{Z}'\cap \mathcal{A}_e \neq \emptyset\}$,  we have $r_{{\sf M}_1}(\mathcal{Z}')=|\mathcal{Z}'\cap \mathcal{A}(\mathcal{P})|+|\mathcal{K}|+k(|V|-|\mathcal{P}|)$. By subcardinality and monotonicity of $r_{{\sf M}_2},$ we have $|\mathcal{Z}'\cap \mathcal{A}(\mathcal{P})|+r_{{\sf M}_2}(\mathcal{A}'-\mathcal{Z}')\ge r_{{\sf M}_2}(\mathcal{A}'-(\mathcal{Z}'-\mathcal{A}(\mathcal{P})))\ge r_{{\sf M}_2}(\mathcal{A}(\mathcal{P})\cup \mathcal{A}_{\mathcal{E}(\mathcal{P})-\mathcal{K}}).$

 The above three inequalities yield
\begin{align*}
k(|\mathcal{P}|-1)&\geq r_{{\sf M}_1}(\mathcal{Z}')+r_{{\sf M}_2}(\mathcal{A}'-\mathcal{Z}')-k(|V|-|\mathcal{P}|)\\
&=|\mathcal{Z}' \cap \mathcal{A}(\mathcal{P})|+|\mathcal{K}|+r_{{\sf M}_2}(\mathcal{A}'-\mathcal{Z}')\\
&\geq |\mathcal{K}|+r_{{\sf M}_2}(\mathcal{\mathcal{A}(\mathcal{P})\cup \mathcal{A}_{\mathcal{E}(\mathcal{P})-\mathcal{K}}}).
\end{align*}

\end{proof}

Let {\boldmath$\mathcal{P}$} be the partition of $V$ and {\boldmath$\mathcal{K}$} the hyperedge set from Claim \ref{part1} and let {\boldmath$\mathcal{Z}$}$=\mathcal{\mathcal{A}(\mathcal{P})\cup \mathcal{A}_{\mathcal{E}(\mathcal{P})-\mathcal{K}}}$. For some $X \in \mathcal{P}$, a dyperedge $a \in \mathcal{A}\cup \mathcal{A}_{\mathcal{E}-\mathcal{K}}$ contributes to either of $\sum_{v \in X}d_\mathcal{Z}^-(v)$ and $d_{\mathcal{A}\cup \mathcal{A}_{\mathcal{E}-\mathcal{K}}}^-(X)$ if and only if $head(a) \in X$ and $tail(a)-X \neq \emptyset$. This yields
\begin{equation}\label{ekd2}
\sum_{v \in X}d_\mathcal{Z}^-(v)=d_{\mathcal{A}\cup \mathcal{A}_{\mathcal{E}-\mathcal{K}}}^-(X).
\end{equation}

For all $\mathcal{P}'\subseteq \mathcal{P}$, as every hyperedge in $\mathcal{E}(\mathcal{P}')$ contributes to one of $|\mathcal{K}|$ and $\sum_{X\in\mathcal{P}'}d_{ \mathcal{A}_{\mathcal{E}-\mathcal{K}}}^-(X)$, we have 
\begin{equation}\label{ekd}
|\mathcal{K}|+\sum_{X\in\mathcal{P}'}d_{\mathcal{A}\cup \mathcal{A}_{\mathcal{E}-\mathcal{K}}}^-(X)\ge e_{\mathcal{E}\cup \mathcal{A}}(\mathcal{P}').
\end{equation}

Using Corollary \ref{gpmd}, we distinguish two cases depending on where the rank of $\mathcal{Z}$ in ${{\sf M}_2}$ is attained.

\begin{case}
$r_{{\sf M}_2}(\mathcal{Z})=\sum_{v \in V}\min \{k-f(v),d_\mathcal{Z}^-(v)\}.$
\end{case}

Let {\boldmath$\mathcal{P}'$} $=\{X\in\mathcal{P}:d_\mathcal{Z}^-(v)\leq k-f(v)$ for all $v \in X\}$. 
For all $X\in\mathcal{P}'$, by the definition of $\mathcal{P}'$ and \eqref{ekd2}, we have $\sum_{v \in X}\min\{k-f(v),d_\mathcal{Z}^-(v)\}=\sum_{v \in X}d_\mathcal{Z}^-(v)=d_{\mathcal{A}\cup \mathcal{A}_{\mathcal{E}-\mathcal{K}}}^-(X).$ 
For all $X\in\mathcal{P}-\mathcal{P}'$, there exists a vertex $v_X\in X$ with $k-f(v_X)<d_\mathcal{Z}^-(v_X)$, and then, by $f, k-f,d_\mathcal{Z}^-\geq 0$, we have $k-f(X)\le k-f(v_X)=\min\{k-f(v_X),d_\mathcal{Z}^-(v_X)\}\le\sum_{v \in X}\min\{k-f(v),d_\mathcal{Z}^-(v)\}.$

  Then, by  \eqref{pkz}, the case distinction made, \eqref{ekd} and \eqref{dktgy} for $\mathcal{P}'$, we obtain 
\begin{align*}
k(|\mathcal{P}|-1)&>|\mathcal{K}|+r_{{\sf M}_2}(\mathcal{\mathcal{A}(\mathcal{P})\cup \mathcal{A}_{\mathcal{E}(\mathcal{P})-\mathcal{K}}})\\
&=|\mathcal{K}|+\sum_{v \in V}\min \{k-f(v),d_\mathcal{Z}^-(v)\}\\
&=|\mathcal{K}|+\sum_{X\in \mathcal{P}'}\sum_{v \in X}\min \{k-f(v),d_\mathcal{Z}^-(v)\}+\sum_{X\in \mathcal{P}-\mathcal{P}'}\sum_{v \in X}\min \{k-f(v),d_\mathcal{Z}^-(v)\}\\
&\geq |\mathcal{K}|+\sum_{X\in \mathcal{P}'}d_{\mathcal{A}\cup \mathcal{A}_{\mathcal{E}-\mathcal{K}}}^-(X)+\sum_{X\in \mathcal{P}-\mathcal{P}'}(k-f(X))\\
&\ge e_{\mathcal{E}\cup \mathcal{A}}(\mathcal{\mathcal{P}'})+k(|\mathcal{P}|-|\mathcal{P}'|)-f(V-\cup \mathcal{P}')\\
&\geq k(|\mathcal{P}|-|\mathcal{P}'|)+k(|\mathcal{P}'|-1)\\
&= k(|\mathcal{P}|-1),
\end{align*}
 a contradiction.

\begin{case}
$r_{{\sf M}_2}(\mathcal{Z})=k(|V|-1)-\sum_{v \in V}\max\{0,k-g(v)-d_\mathcal{Z}^-(v)\}.$
\end{case}
Let {\boldmath$\mathcal{P}''$} $=\{X\in\mathcal{P}:k-g(v)-d_\mathcal{Z}^-(v)\geq 0$ for all $v \in X\}$. 
For all $X\in\mathcal{P}''$, by the definition of $\mathcal{P}''$ and \eqref{ekd2}, we have $\sum_{v \in X}\max\{0,k-g(v)-d_\mathcal{Z}^-(v)\}=k|X|-g(X)-d_{\mathcal{A}\cup \mathcal{A}_{\mathcal{E}-\mathcal{K}}}^-(X).$ 
For all $X\in \mathcal{P}-\mathcal{P}''$, there exists a vertex $v_X \in X$ with $0>k-g(v_X)-d_\mathcal{Z}^-(v_X)$, and then, by $g,d^-_\mathcal{Z} \geq 0$, we have $\sum_{v \in X}\max\{0,k-g(v)-d_\mathcal{Z}^-(v)\}=\sum_{v \in X-v_X}\max\{0,k-g(v)-d_\mathcal{Z}^-(v)\}\le k(|X|-1).$
By \eqref{pkz} and the case distinction we made, we obtain $k(|\mathcal{P}|-1)>|\mathcal{K}|+r_{{\sf M}_2}(\mathcal{Z})=|\mathcal{K}|+k(|V|-1)-\sum_{v \in V}\max\{0,k-g(v)-d_\mathcal{Z}^-(v)\}.$

This yields
\begin{align*}
k(|V|-|\mathcal{P}|)+|\mathcal{K}|&<\sum_{v \in V}\max\{0,k-g(v)-d_\mathcal{Z}^-(v)\}\\
&= \sum_{X \in \mathcal{P}''}\sum_{v \in X}\max\{0,k-g(v)-d_\mathcal{Z}^-(v)\}+ \sum_{X \in \mathcal{P}-\mathcal{P}''}\sum_{v \in X}\max\{0,k-g(v)-d_\mathcal{Z}^-(v)\}\\
&\leq \sum_{X \in \mathcal{P}''}(k|X|-g(X)-d_{\mathcal{A}\cup \mathcal{A}_{\mathcal{E}-\mathcal{K}}}^-(X))+\sum_{X \in \mathcal{P}-\mathcal{P}''}k(|X|-1)\\
&=k(|V|-|\mathcal{P}-\mathcal{P}''|)-\sum_{X \in \mathcal{P}''}(g(X)+d_{\mathcal{A}\cup \mathcal{A}_{\mathcal{E}-\mathcal{K}}}^-(X)).
\end{align*}

We obtain by \eqref{ekd} and \eqref{dktgy2} for $\mathcal{P}''$ that
\begin{align*}
k|\mathcal{P}''|&>|\mathcal{K}|+\sum_{X \in \mathcal{P}''}d_{\mathcal{A}\cup \mathcal{A}_{\mathcal{E}-\mathcal{K}}}^-(X)+\sum_{X \in \mathcal{P}''}g(X)\\
&\geq e_{\mathcal{A}\cup \mathcal{E}}(\mathcal{P}'')+\sum_{X \in \mathcal{P}''}g(X)\\
&\geq k|\mathcal{P}''|,
\end{align*}

a contradiction.
\medskip

The case distinction is complete which finishes the proof of Theorem \ref{new}.
\end{proofof}

Finally, we deal with the algorithmic consequences of Theorem \ref{matroidflex} which are contained in Theorem \ref{algoflex}.

\begin{proofof}{Theorem \ref{algoflex}}

It can be checked efficiently whether \eqref{gpc1} and \eqref{gpc2} are  satisfied. If not, then, by Lemma \ref{condgenpart}, one of \eqref{fg}, \eqref{dktgy} and \eqref{dktgy2} is not satisfied. By Lemma \ref{necessity}, no $(k,f,g)$-flexible packing of mixed hyperarborescences exists in $\mathcal{F}.$ Otherwise, by 
Theorem \ref{matroidflex}, the common independent sets of ${\sf M}^k_{\mathcal{F}}$ and ${\sf M}_{\mathcal{D}_{\mathcal{F}}}^{(k,f,g)}$ of size $k(|V|-1)$ are exactly the dyperedge sets of the $(k,f,g)$-feasible packings in $\mathcal{D}_{\mathcal{F}}$.

 Define {\boldmath $w'$}$:\mathcal{A}\cup \mathcal{A}_{\mathcal{E}}\rightarrow \mathbb{R}$ by $w'(a)=w(a)$ for all $a \in \mathcal{A}$ and $w'(a)=w(e)$ for all $a \in \mathcal{A}_{e}$ for all $e \in\mathcal{E}$. We first check if there is a common independent set of ${\sf M}^k_{\mathcal{F}}$ and ${\sf M}_{\mathcal{D}_{\mathcal{F}}}^{(k,f,g)}$ of size $k(|V|-1)$ and if this is the case, we find a common independent set of {\boldmath$\mathcal{A}^*$} of ${\sf M}^k_{\mathcal{F}}$ and ${\sf M}_{\mathcal{D}_{\mathcal{F}}}^{(k,f,g)}$ of size $k(|V|-1)$ minimizing $w'(\mathcal{A}^*)$. This can be done in polynomial time using Theorem \ref{matroidintersection}(b) because polynomial time independence oracles for ${\sf M}^k_{\mathcal{F}}$ and ${\sf M}_{\mathcal{D}_{\mathcal{F}}}^{(k,f,g)}$ are available by Lemmas \ref{obstacle} and \ref{oracle}. Now consider the subdypergraph {\boldmath$\mathcal{D}^*$} $=(V,\mathcal{A}^*)$ of $\mathcal{D}_{\mathcal{F}}$ and let {\boldmath$R$} be the multiset in $V$ in which every $v \in V$ is contained $k-d_{\mathcal{A}^*}^-(v)$ times. By Theorem $9(a)$ in \cite{fklst}, $\mathcal{A}^*$ can be decomposed into the dyperedge set of a packing of spanning hyperarborescences with root set $R$ in polynomial time. Replacing dyperedges in $\mathcal{A}_e$ by $e$ for all $e \in \mathcal{E}$, we obtain, by Lemma \ref{flexfeas}, the desired packing in $\mathcal{F}$. 
\end{proofof}
\bibliographystyle{siamplain}

\end{document}